\documentclass[a4paper,11pt,reqno]{amsart}
\usepackage[T1]{fontenc}
\usepackage{lmodern}
\usepackage[utf8]{inputenc}

\usepackage{amsfonts} 	
\usepackage[centertags]{amsmath}	
\usepackage{dsfont}
\usepackage{stmaryrd}	
\usepackage{amssymb}	
\usepackage{amsthm}		
\usepackage{mathdots}
\usepackage{upgreek}  
\usepackage{mathabx}
\usepackage{enumerate}		
\usepackage{tikz}
\usetikzlibrary{arrows.meta}
\usepackage{footnote}		
\usepackage{subcaption}
\usepackage{mathrsfs}
\usepackage{accents} 

\newcommand\thickbar[1]{\accentset{\rule{.4em}{.6pt}}{#1}}
\newcommand{\MC}{\mathrm{MC}}
\newcommand{\prob}[1]{\mathbb{P}\left(#1\right)}
\newcommand{\ind}[1]{\mathds{1}\left\{#1\right\}}
\newcommand{\eqdist}{\overset{\mathrm{d}}{=}}
\DeclareMathOperator{\supp}{support}
\newcommand{\PM}{\mathcal{P}}
\newcommand{\SM}{\mathcal{S}}
\DeclareMathOperator{\proj}{proj}
\newcommand{\rev}{\text{rev}}
\newcommand{\Xoriginal}{\thickbar{X}}
\newcommand{\Yoriginal}{\thickbar{Y}}
\newcommand{\Zoriginal}{\thickbar{Z}}
\newcommand{\Xcopy}{X}
\newcommand{\Ycopy}{Y}
\newcommand{\Zcopy}{Z}
\theoremstyle{plain}
\newtheorem{thm}{Theorem}
\newtheorem{prop}[thm]{Proposition}
\newtheorem{lem}[thm]{Lemma}
\newtheorem{cor}[thm]{Corollary}
\theoremstyle{definition}
\newtheorem{dfn}[thm]{Definition}
\newtheorem{assump}[thm]{Assumption}
\newtheorem{remark}{Remark}
\newtheorem{example}{Example}

\usepackage{hyperref}  

\begin{document}

\title[Coupling Markov Chains with a Common Image Chain]{Coupling Markov Chains \\with a Common Image Chain}
\author{Edward T.~Crane, Alexander E.~Holroyd, \and Erin G.~Russell}
\address{University of Bristol, UK}
\email{edward.crane@bristol.ac.uk}  
\email{a.e.holroyd@bristol.ac.uk}
\email{erin.russell@bristol.ac.uk}
\date{17 April 2026}

\keywords{Markov chain, coupling, joining, weak lumping, strong lumping, exact lumping, Dynkin's condition, Pitman-Rogers condition, intertwining}

\subjclass{60J10}

\begin{abstract}
    Consider time-homogeneous discrete-time Markov chains $X$, $Y$, and $Z$ on countable state spaces, with specified initial distributions. Suppose for maps $f$ and $g$ that $(f(X_t))_{t \ge 0}$ and $(g(Y_t))_{t \ge 0}$ are both equal in law to $Z$. We prove that $X$ and $Y$ can be coupled so that $(X_t, Y_t)_{t \ge 0}$ is a homogeneous Markov chain with $f(X_t) = g(Y_t)$ for all $t \ge 0$. Without the assumption that $Z$ is Markov, no such Markov coupling exists in general, even an inhomogeneous one. Moreover, we give an explicit construction of such a coupling, with the additional property that $X$ and $Y$ are conditionally independent given the entire trajectory $(f(X_t))_{t \ge 0}$.
    
    Under the further assumption that $X$ and $Y$ are stationary, we construct a coupling having the above properties that is also stationary. In this case, conditional independence holds for the corresponding two-sided chains indexed by $\mathbb{Z}$ (but not necessarily for the one-sided versions). 

    We prove further properties of our couplings in special cases where $f$ or $g$ satisfies the strong lumping condition (also known as Dynkin's condition) or the exact lumping condition (also known as the Pitman-Rogers condition). When $f$ is a strong lumping and $g$ is an exact lumping, we show that our coupling coincides with an intertwining of Markov chains as constructed by Diaconis and Fill. 
\end{abstract}

\maketitle

%
%

\section{Introduction}
\label{sec:1}

\begin{thm}[Homogeneous coupling]
\label{thm:DTHWMC} 
    Consider time-homogeneous Markov chains  $\Xoriginal$, $\Yoriginal$, and $\Zoriginal$ with time indexed by  $t=0,1,2,\dots$, and countable state spaces $A$, $B$, and $C$ respectively. 
    Suppose for maps $f: A \to C$ and $g: B \to C$ that
    \[\bigl(f(\Xoriginal_t)\bigr)_{t \ge 0} \eqdist \Zoriginal \eqdist \bigl(g(\Yoriginal_t)\bigr)_{t \ge 0}\,.\]
    Then there exists a homogeneous Markov chain $(\Xcopy_t, \Ycopy_t)_{t\geq0}$ such that 
     \[(\Xcopy_t)_{t \ge 0} \eqdist \Xoriginal \;\text{ and } \; (\Ycopy_t)_{t \ge 0} \eqdist \Yoriginal\,,\]
     and
     $$f(X_t)=g(Y_t)\text{ for all }t\geq 0.$$
     Writing $\Zcopy_t =f(\Xcopy_t) = g(\Ycopy_t)$ (so that $\Zcopy \eqdist \Zoriginal$), the marginal processes
     $X$ and $Y$ may be taken to be conditionally independent given the process $Z$.
\end{thm}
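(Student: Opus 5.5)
The plan is to build the coupling by a single explicit homogeneous transition kernel on pairs, and to check by induction on $t$ that it has the required marginals and conditional independence. Write $P$ and $Q$ for the transition matrices of $\Xoriginal$ and $\Yoriginal$, and $R$ for that of $\Zoriginal$. For $x\in A$ and $z'\in C$ put $a(x,z')=P\bigl(x,f^{-1}(z')\bigr)$, and for $y \in B$ put $b(y,z')=Q\bigl(y,g^{-1}(z')\bigr)$. On pairs $(x,y)$ with $f(x)=g(y)=z$, define
\[
K\bigl((x,y),(x',y')\bigr)=\frac{P(x,x')\,Q(y,y')}{R(z,z')}\qquad\text{whenever } f(x')=g(y')=z' \text{ and } R(z,z')>0,
\]
and $K=0$ otherwise. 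Equivalently, from $(x,y)$ one chooses $z'$ with probability $h(x,y,z') = a(x,z')\,b(y,z')/R(z,z')$. One then picks $x'$ from $P(x,\cdot)$ conditioned on $f^{-1}(z')$, and independently $y'$ from $Q(y,\cdot)$ conditioned on $g^{-1}(z')$. The initial law is $\mu(x,y)=\mu_X(x)\mu_Y(y)/\mu_Z(z)$ for $f(x)=g(y)=z$, where $\mu_X$, $\mu_Y$, $\mu_Z$ are the initial laws of $\Xoriginal$, $\Yoriginal$, $\Zoriginal$.

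The key identity comes from the Markov property of $\Zoriginal$. Let $\alpha_t(x)=\mathbb P(\Xoriginal_t=x\mid f(\Xoriginal_0),\dots,f(\Xoriginal_t))$, and define $\beta_t(y)$ similarly for $\Yoriginal$. Then
\[
\sum_x \alpha_t(x)\,a(x,z') \;=\; \mathbb P\bigl(f(\Xoriginal_{t+1})=z' \mid f(\Xoriginal_0),\dots, f(\Xoriginal_t)\bigr) \;=\; R(z_t,z') \;=\; \sum_y \beta_t(y)\,b(y,z').
\]
This holds because $f(\Xoriginal)\eqdist\Zoriginal\eqdist g(\Yoriginal)$ with $\Zoriginal$ homogeneous Markov. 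The middle term depends only on $z_t$, and this is exactly what makes the denominator in $K$ history-free. It is the point where Markovianity of $\Zoriginal$ is used, consistent with the remark in the abstract that without it no Markov coupling need exist.

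I will prove by induction on $t$ the following claim. Under the chain started from $\mu$ with kernel $K$, for every $z_0,\dots,z_t$ with positive probability, the law of $(X_0,\dots,X_t,Y_0,\dots,Y_t)$ given $Z_0=z_0,\dots,Z_t=z_t$ is the product of the conditional law of $(\Xoriginal_0,\dots,\Xoriginal_t)$ given $f(\Xoriginal_s)=z_s$ for $s\le t$ and the corresponding conditional law for $\Yoriginal$. Moreover, $(Z_0,\dots,Z_t)$ has the law of $(\Zoriginal_0,\dots,\Zoriginal_t)$.

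The inductive step is a direct computation. One multiplies the product form at time $t$ by $K$ and sums against $\alpha_t\otimes\beta_t$. The factor $R(z_t,z_{t+1})$ in the denominator cancels against the probability that $Z_{t+1}=z_{t+1}$. The remaining factor splits as $P(x_t,x_{t+1})$ times the $\Xoriginal$-path weight, and similarly for $Y$. The displayed identity is used precisely to show that the marginal $X$-transition, averaged over the conditional law of $Y_t$, returns $P(x_t,x_{t+1})$. From the claim it follows that $X\eqdist\Xoriginal$ on every finite time window, hence as processes, and likewise $Y\eqdist\Yoriginal$. The constraint $f(X_t)=g(Y_t)$ holds by construction, and conditional independence given the whole process $Z$ follows by letting $t\to\infty$, since the finite-dimensional conditional laws factor.

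I expect the main obstacle to be the bookkeeping with null events, rather than the algebra. One must check that $K$ is a genuine stochastic kernel on the set of pairs actually reachable, which requires $\sum_{z'}h(x,y,z')=1$. That identity is not pointwise in $(x,y)$; it holds only on the support of the chain, after averaging against $\alpha_t \otimes \beta_t$. I would handle this by restricting to the set $S$ of pairs reachable with positive probability, and arguing that for $(x,y)\in S$ the required normalisation follows from the product-form claim. On the complement of $S$ the kernel is redefined arbitrarily, for instance as a product coupling restricted to matching images when that is possible, and otherwise as a point mass. If this normalisation turns out to fail pointwise on $S$, I would replace $h$ by the normalised version $a\,b/\sum_{z''}a(x,z'')b(y,z'')$ on any offending pairs and show that these pairs carry zero mass.
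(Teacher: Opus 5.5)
Your kernel $K$ is exactly the paper's matrix $R$ from \eqref{eq: R def}, and your $\mu$ is the paper's $\upalpha$ with the factor $\varphi$ deleted. The gap is that $R$ is in general not stochastic on states of positive mass, and this cannot be repaired locally.

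Write $\alpha(x_{\le t}\mid z_{\le s})$ for the conditional probability that $\Xoriginal_{\le t}=x_{\le t}$ given $f(\Xoriginal_{\le s})=z_{\le s}$, and define $\beta$ likewise. At each $t$ your inductive claim prescribes a genuine probability measure on $(A\times B)^{t+1}$, namely $\mu(x_0,y_0)\prod_{s=1}^{t}K\bigl((x_{s-1},y_{s-1}),(x_s,y_s)\bigr)$. But these measures are not consistent in $t$. Summing the time-$(t+1)$ measure over $(x_{t+1},y_{t+1})$ replaces $\alpha(x_{\le t}\mid z_{\le t})\,\beta(y_{\le t}\mid z_{\le t})$ by $\mathbb{E}\bigl[\alpha(x_{\le t}\mid Z_{\le t+1})\,\beta(y_{\le t}\mid Z_{\le t+1})\mid Z_{\le t}=z_{\le t}\bigr]$, and a product of two martingales is not a martingale. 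Your displayed identity only gives the average $\sum_{x,y}\alpha_t(x)\beta_t(y)\sum_{z'}h(x,y,z')=1$. It does not give row sum $1$ at each reachable pair, and the offending pairs do carry mass.

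Example~\ref{ex:stationarymarginals} shows this starkly. There $P_Z\equiv\frac12$, and each of the $32$ pairs $((a,b,c),(a',b,c'))\in\Delta$ gets $\mu$-mass $\frac1{32}$. All successors of $(a,b,c)$ have image $c$, and all successors of $(a',b,c')$ have image $c'$. So the row of $K$ vanishes when $c\neq c'$ and sums to $2$ when $c=c'$, and $\mu$ puts mass $\frac12$ on dead-end states. In fact every valid coupling has $c=c'$ at time $0$, so the time-$0$ law must already depend on $Z_1$. Renormalising $h$ does not help: from $c=c'$ it makes the new last coordinates $d,d'$ independent, which again lands in dead-end states with probability $\frac12$.

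The missing idea is that one must condition on the \emph{entire future} of $Z$, not only on $Z_{\le t}$ (compare Remark~\ref{not-just-stepwise}). The paper shows that $\varphi=\lim_{m\to\infty}R^m\mathbf 1_\Delta$ exists, by martingale convergence of $L_m(a)M_m(b)$ after passing to full-support initial laws. It then uses $\upalpha=\frac{\upalpha_X\upalpha_Y}{\upalpha_Z}\varphi$ and the Doob-transform-like $P$ of \eqref{eq: coupled P def} on $\Delta'=\{\varphi>0\}$. Stochasticity of $P$, the marginal laws and the conditional independence all come from identifying the conditional laws given $Z$ with the martingale limits $L_\infty$, $M_\infty$. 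In the example above, $\varphi=2\cdot\mathbf 1\{c=c'\}$, and the correct coupling has $X_t=Y_t$ for all $t\ge1$.

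For the same reason your final step is unavailable: you cannot obtain conditional independence given $Z$ by letting $t\to\infty$ in conditional independence given $Z_{\le t}$. The correct coupling is conditionally independent given $Z$, but in general not given $Z_{\le t}$. Your construction is valid when $R$ is stochastic, so that $\varphi\equiv1$, for instance when $f$ is a strong lumping (Proposition~\ref{lem: g strong proj1 strong}).
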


\begin{remark}\label{not-just-strong}
     There is no assumption in Theorem~\ref{thm:DTHWMC} that  $(f(\Xoriginal_t))_{t \ge 0}$ and $(g(\Yoriginal_t))_{t \ge 0}$ should remain Markov if $\Xoriginal$ and $\Yoriginal$ are started from arbitrary initial distributions (but with the same transition matrices).  If such an assumption is added then a much simpler proof is available, and the result is not new -- it follows from a more general result of Edalat~\cite{edalat1999}.  Maps $f$ and $g$ with this stronger property are called \emph{strong lumpings}.  The more general setting  of Theorem~\ref{thm:DTHWMC} is considerably more subtle. Similar remarks apply to Theorem~\ref{thm:stationary} and Corollaries~\ref{thm:DTIWMC} and \ref{cor:quasistationary} below.  We discuss these matters in detail in
     Sections~\ref{S:weak}--\ref{ss:EMCs}; also see Remark~\ref{remark-strong-exact}. 
     In particular, we establish further consequences under the assumption that one or both of $f$ and $g$ are strong lumpings, as well as under the related \emph{exact lumping} condition.   
\end{remark}

\begin{remark}
    If the assumption in Theorem~\ref{thm:DTHWMC} that the image process $\Zoriginal$ is Markov is omitted then the conclusion may fail.  
    An example showing this can be obtained from a pair of distinct finite trees whose random walk return times to the root have the same distribution, as given by Benjamini, Kozma, Lov\'asz, Romik and Tardos ~\cite{benjamini2006waiting}. See Example~\ref{ex:nonMarkovZnocoupling} in Section~\ref{sec:examples} for details. 
\end{remark}

\begin{remark}
    The conclusion that the coupling $(\Xcopy_t, \Ycopy_t)_{t\geq0}$ is Markov is a key point of Theorem~\ref{thm:DTHWMC}. Indeed, straightforward  abstract considerations show that under the assumptions of the theorem there exists a stochastic process $(\Xcopy_t, \Ycopy_t)_{t\geq0}$ satisfying all the claimed properties besides the Markov property.  See also Remark~\ref{cond-remark} below.
\end{remark}
\begin{remark}\label{not-just-stepwise}
    Conditioning on the entire trajectory $Z = (Z_t)_{t \ge 0}$ is in general not the same as conditioning only one step ahead, i.e.\ letting $X_{t+1}$ and $Y_{t+1}$ be conditionally independent given $X_t$, $Y_t$, and $Z_{t+1}$. (See Example~\ref{ex:biasedRW} in Section~\ref{sec:examples}.) However, these are the same when $f$ and $g$ are strong lumpings -- see also Remark~\ref{remark-strong-exact} below. Making $X$ and $Y$ conditionally independent given $Z$ is also not the same as conditioning independent processes distributed like $\Xoriginal$ and $\Yoriginal$ on the (possibly null) event that $f(X_t) = g(Y_t)$ for all $t \ge 0$. 
\end{remark}

Theorem~\ref{thm:DTHWMC} can be adapted to time-inhomogeneous Markov chains via the straightforward device of including the time parameter in the state, leading to the following.
\begin{cor}[Inhomogeneous coupling]
\label{thm:DTIWMC}
     If the time-homogeneity assumption on $\Xoriginal$, $\Yoriginal$ and $\Zoriginal$ in Theorem~\ref{thm:DTHWMC} is dropped, then the conclusions still hold, except with the resulting coupling $(X,Y)$ no longer necessarily homogeneous. 
\end{cor}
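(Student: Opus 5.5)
The plan is to reduce to Theorem~\ref{thm:DTHWMC} by adding time to the state, as the sentence before the corollary suggests. Let $\Xoriginal$, $\Yoriginal$, $\Zoriginal$ be possibly inhomogeneous Markov chains on $A$, $B$, $C$, with transition kernels $P^X_t$, $P^Y_t$, $P^Z_t$ at step $t\to t+1$. Define the space-time chains
\[
\hat X_t=(t,\Xoriginal_t)\in \hat A:=\mathbb{Z}_{\ge0}\times A,\qquad \hat Y_t=(t,\Yoriginal_t)\in \hat B:=\mathbb{Z}_{\ge0}\times B,\qquad \hat Z_t=(t,\Zoriginal_t)\in \hat C:=\mathbb{Z}_{\ge0}\times C .
\]
The state spaces are still countable. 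Each space-time chain is a time-homogeneous Markov chain, with kernel $\hat P((t,a),(t+1,a'))=P^X_t(a,a')$ and zero mass elsewhere, and similarly for the others. Its initial distribution is $\delta_0$ times the original one.

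Next, set $\hat f(t,a)=(t,f(a))$ and $\hat g(t,b)=(t,g(b))$. Then $\hat f(\hat X_t)=(t,f(\Xoriginal_t))$. Since $(f(\Xoriginal_t))_t\eqdist\Zoriginal$ as processes, we get $(\hat f(\hat X_t))_t\eqdist \hat Z$, because the time coordinate is deterministic. The same holds for $\hat g(\hat Y)$. So all hypotheses of Theorem~\ref{thm:DTHWMC} are met. Applying it gives a homogeneous Markov chain $(\hat X'_t,\hat Y'_t)_{t\ge0}$ on $\hat A\times\hat B$ with $\hat X'\eqdist\hat X$, $\hat Y'\eqdist \hat Y$, and $\hat f(\hat X'_t)=\hat g(\hat Y'_t)$ for all $t$. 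The theorem also lets us take $\hat X'$ and $\hat Y'$ conditionally independent given $\hat Z'_t=\hat f(\hat X'_t)$.

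To conclude, note that the time coordinates of $\hat X'_t$ and $\hat Y'_t$ are almost surely both equal to $t$, since that holds for the laws of $\hat X$ and $\hat Y$. Write $\hat X'_t=(t,X_t)$ and $\hat Y'_t=(t,Y_t)$. Then $X\eqdist\Xoriginal$, $Y\eqdist\Yoriginal$, and the equality of first and second coordinates in $\hat f(\hat X'_t)=\hat g(\hat Y'_t)$ gives $f(X_t)=g(Y_t)$.

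The pair $(X_t,Y_t)$ is a (generally inhomogeneous) Markov chain. Indeed, $\sigma(\hat X'_s,\hat Y'_s:s\le t)=\sigma(X_s,Y_s:s\le t)$, because the time coordinate is deterministic. Its transition kernel at time $t$ is the homogeneous kernel of $(\hat X',\hat Y')$ restricted to states with time coordinate $t$. Finally, $\hat Z'$ and $Z=(f(X_t))_t$ generate the same $\sigma$-field, so conditional independence given $\hat Z'$ is exactly conditional independence given $Z$.

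There is no real obstacle here. The only point needing care is that the homogeneous chain produced by Theorem~\ref{thm:DTHWMC} might, a priori, put mass on states whose time coordinate is not $t$. This is ruled out because its marginals have the laws of $\hat X$ and $\hat Y$, which are supported on the correct time slices.
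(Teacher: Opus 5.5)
Your proposal is correct and follows the paper's proof essentially verbatim. The paper also lifts $\Xoriginal$, $\Yoriginal$, $\Zoriginal$ to homogeneous space-time chains on $A\times\mathbb{N}$, $B\times\mathbb{N}$, $C\times\mathbb{N}$, applies Theorem~\ref{thm:DTHWMC} with $\widehat f(i,t)=(f(i),t)$ and $\widehat g(i,t)=(g(i),t)$, and forgets the common time coordinate; your extra checks (that the time coordinate is a.s.\ $t$, and that the $\sigma$-fields coincide so the Markov property and conditional independence transfer) fill in details the paper leaves implicit.
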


We have the following variant for stationary Markov chains.  In this case time must be two-sided in order for the conditional independence statement to hold.  A stochastic process $(X_t)_{t\in\mathbb{Z}}$ is \textbf{stationary} if it is equal in distribution to $(X_{t+1})_{t\in\mathbb{Z}}$. 

\begin{thm}[Stationary coupling]
\label{thm:stationary}
    Consider stationary time-homogeneous Markov chains  $\Xoriginal$, $\Yoriginal$, and $\Zoriginal$ with time indexed by  $t\in\mathbb{Z}$.
    Suppose for maps $f: A \to C$ and $g: B \to C$ that \[(f(\Xoriginal_t))_{t \in \mathbb{Z}} \eqdist \Zoriginal \eqdist (g(\Yoriginal_t))_{t \in \mathbb{Z}}.\]
    Then there is a stationary Markov chain $(X_t,Y_t)_{t \in \mathbb{Z}}$ such that $(X_t)_{t \in \mathbb{Z}} \eqdist \Xoriginal$, $(Y_t)_{t \in \mathbb{Z}} \eqdist \Yoriginal$, and $f(X_t) = g(Y_t)$ for all $t \in \mathbb{Z}$. Moreover, $(X_t)_{t \in \mathbb{Z}}$ and $(Y_t)_{t \in \mathbb{Z}}$ may be taken to be conditionally independent given $(f(X_t))_{t \in \mathbb{Z}}$. 
\end{thm}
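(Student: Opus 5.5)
The plan is to define the coupling directly as the two-sided conditionally independent joining, and then to verify that it is stationary and Markov. Let $\mu$ be the law of $\Zoriginal=(\Zoriginal_t)_{t\in\mathbb{Z}}$ on $C^{\mathbb{Z}}$. Since $f(\Xoriginal)\eqdist\Zoriginal$, we can disintegrate the law of $\Xoriginal$ along the map $x\mapsto f\circ x$. This gives a kernel $\kappa_X(z,\cdot)$ on $A^{\mathbb{Z}}$, the conditional law of $\Xoriginal$ given $f(\Xoriginal)=z$; it is supported on $\{x: f(x_t)=z_t\ \forall t\}$. Define $\kappa_Y$ in the same way. Then sample $Z\sim\mu$, and, conditionally on $Z$, sample $X\sim\kappa_X(Z,\cdot)$ and $Y\sim\kappa_Y(Z,\cdot)$ independently. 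All spaces are countable products of countable sets, hence standard Borel, so regular conditional probabilities exist. The marginal properties and the identity $f(X_t)=g(Y_t)=Z_t$ hold by construction, and so does conditional independence. Stationarity also follows: the shift commutes with $f$ applied coordinatewise, so by uniqueness of disintegrations $\kappa_X(\theta z,\theta\,\cdot)=\kappa_X(z,\cdot)$ for $\mu$-a.e.\ $z$. Since $\mu$ is shift invariant, the joint law of $(X,Y,Z)$ is shift invariant.

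The substance is the Markov property of $(X_t,Y_t)$. Fix $t$ and write $X_{\le t}, X_{>t}$ for past and future, with $Z$ split similarly. The key lemma, which I would prove first, concerns $\Xoriginal$ alone. Because $\Xoriginal$ is Markov, $\Xoriginal_{<t}$ and $\Xoriginal_{>t}$ are conditionally independent given $\Xoriginal_t$. Since $Z_{\le t}$ is a function of $X_{\le t}$ and $Z_{>t}$ of $X_{>t}$, it follows that $X_{\le t}$ and $X_{>t}$ are conditionally independent given $(X_t, Z)$. It further follows that the conditional law of $X_{>t}$ given $(X_{\le t},Z)$ depends only on $(X_t,Z_{\ge t})$. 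It is \emph{not} true that it depends on $(X_t, Z_{>t})$ alone in a way that forgets $Z_{<t}$. Indeed, under $\kappa_X(z,\cdot)$ the past $z_{<t}$ influences the law of $X_t$, but once $X_t$ is given it does not influence $X_{>t}$. Combining this with conditional independence of $X$ and $Y$ given $Z$, we get that, given $(X_{\le t},Y_{\le t},Z)$, the futures $X_{>t}$ and $Y_{>t}$ are independent, with laws depending only on $(X_t,Z_{\ge t})$ and $(Y_t,Z_{\ge t})$ respectively.

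It remains to integrate out the unobserved future $Z_{>t}$. The conditional law of $Z_{>t}$ given $(X_{\le t},Y_{\le t})$ must depend only on $(X_t,Y_t)$. To see this, compute the joint density of $(X_{\le t},Y_{\le t},Z_{>t})$. Using $\mu$ Markov, the law of $Z$ factors as $\mu(z_{\le t})\,\mu(z_{>t}\mid z_t)$. The law of $X$ factors as $P(x_{\le t})P(x_{>t}\mid x_t)$, and similarly for $Y$. Dividing by $\mu$ as the disintegration prescribes, the joint weight of $(x,y)$ consistent with $z$ is
\[
\frac{P(x_{\le t})Q(y_{\le t})}{\mu(z_{\le t})}\cdot\frac{P(x_{>t}\mid x_t)\,Q(y_{>t}\mid y_t)}{\mu(z_{>t}\mid z_t)}.
\]
This is a product of a past factor and a future factor linked only through $(x_t,y_t)$, and it gives the Markov property. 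The transition kernel is the same at every $t$ by stationarity, so the chain is homogeneous.

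The main obstacle is making this density computation rigorous when $\mu$ has uncountably many trajectories, so that pointwise densities are not available. I would handle this by working with cylinder events on windows $[-n,m]$ and taking limits via martingale convergence, or by computing conditional expectations of cylinder functions directly. The Markov property of $Z$ is used exactly at the factorization of $\mu$ in the denominator; this is where the result fails without it. I would also invoke Theorem~\ref{thm:DTHWMC}, or mimic its proof, for the analogous one-sided statement as a sanity check on the finite windows.
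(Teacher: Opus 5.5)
\paragraph{Verdict: genuine gap at the central step.}
Your setup is legitimate and differs from the paper's. You define the coupling abstractly as the relatively independent joining, which gives the marginals, $f(X_t)=g(Y_t)$, conditional independence and stationarity for free. Your conditional-independence manipulations are also correct. They reduce the Markov property to one claim: the conditional law of $Z_{>t}$ given $(X_{\le t},Y_{\le t})$ depends only on $(X_t,Y_t)$.

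That claim is the whole content of the theorem, and your only argument for it is the displayed ``density'' $P(x)Q(y)/\mu(z)$. Every factor there is the probability of an infinite path and is typically zero, and this is exactly the difficulty flagged in Remark~\ref{cond-remark}. ``Windows and martingale convergence'' is not yet an argument. The restriction of your joining to $[-n,n]$ is \emph{not} the process obtained by conditioning only on $Z_{[-n,n]}$: in Example~\ref{ex:stationarymarginals} the joining forces $X=Y$, while the window couplings do not. So a density computation on a window describes a different process, and you have not said how Markovianity transfers to the limit. Separately, your sentence ``It is not true that it depends on $(X_t,Z_{>t})$ alone\dots'' contradicts the preceding one, since $Z_t=f(X_t)$. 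What is true is that the law of $X_t$ itself given $Z$ depends on $Z_{<t}$.

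\paragraph{How to close it along your lines.}
This can be done, and more cheaply than in the paper.

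\emph{Step 1 (window couplings are Markov).} Let $W^n$ be the coupling in which, given $Z$, $X$ and $Y$ are independent with the laws of $\Xoriginal$ and $\Yoriginal$ conditioned only on their images agreeing with $Z$ on $[-n,n]$ (the paper's $W^{-n,n}$). For this coupling your density computation is rigorous. For $w=((a_i,b_i))_{i=-n}^{n}\in\Delta^{[-n,n]}$ with $c_i=f(a_i)$,
\[
\mathbb{P}\bigl(W^n_{[-n,n]}=w\bigr)=\frac{\uppi_X(a_{-n})\,\uppi_Y(b_{-n})}{\uppi_Z(c_{-n})}\prod_{i=-n+1}^{n}R(w_{i-1},w_i).
\]
This is of product form, so $W^n$ is a (possibly inhomogeneous) Markov chain on $[-n,n]$.

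\emph{Step 2 (cylinder probabilities converge).} By L\'evy's upward theorem, applied under the law of $(X,Z)\eqdist(\Xoriginal,f(\Xoriginal))$, we have $\mathbb{P}(X_{[k,\ell]}=a\mid Z_{[-n,n]})\to\mathbb{P}(X_{[k,\ell]}=a\mid Z)$ almost surely, and likewise for $Y$. By bounded convergence, each cylinder probability of $W^n$ converges to the corresponding one for your joining $W$.

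\emph{Step 3 (the Markov property passes to the limit).} On a countable state space the Markov property is equivalent to the identities
\[
\mathbb{P}(W_{[k,\ell+1]}=w)\,\mathbb{P}(W_\ell=w_\ell)=\mathbb{P}(W_{[k,\ell]}=w_{[k,\ell]})\,\mathbb{P}(W_{[\ell,\ell+1]}=w_{[\ell,\ell+1]})\quad(k\le\ell).
\]
Conditioning on the whole past then follows by letting $k\to-\infty$. Each identity involves only finitely many cylinder probabilities, so it passes from $W^n$ to $W$. Homogeneity then follows from stationarity, as you say.

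\paragraph{Comparison with the paper.}
Completed this way, your route never needs $\varphi$ or $\varphi^{\rev}$, nor the existence of their limits. The paper instead builds $\MC(\uppi,P)$ explicitly on $\Delta''$ and then identifies its conditional laws given $Z$ with the martingale limits $L_\infty M_\infty$. The price of your route is that it does not produce the explicit $\uppi$, $P$ and state space $\Delta''$, which the paper uses later (e.g.\ in Remark~\ref{rem:stationaryexactlumping}).
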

The next corollary concerns the following variant. A homogeneous Markov chain $X=(X_t)_{t\ge 0}$ is called \textbf{quasistationary} if it has an absorbing state $\rho$ and an absorption probability $\lambda\in(0,1)$ such that $\mathbb{P}(X_t \neq \rho)  = (1-\lambda)^t$ for all $t \ge 0$, and $\mathbb{P}(X_t = a \mid X_t \neq \rho)$ does not depend on $t$. 
\begin{samepage}
\begin{cor}[Quasistationary coupling]
\label{cor:quasistationary}
    Let $\Xoriginal$, $\Yoriginal$ and $\Zoriginal$ be quasistationary Markov chains, with state spaces $A$, $B$, and $C$, and absorbing states $\rho_A$, $\rho_B$, and $\rho_C$ respectively. Suppose for maps $f:A \to C$ and $g: B \to C$ satisfying $f^{-1}(\rho_C) = \{\rho_A\}$ and $g^{-1}(\rho_C) = \{\rho_B\}$ that 
    \[
    (f(\Xoriginal_t))_{t \ge 0} \eqdist \Zoriginal \eqdist (g(\Yoriginal_t))_{t \ge 0}.
    \]
    Then  $\Xoriginal, \Yoriginal$ and $\Zoriginal$ have equal absorption probabilities, say $\lambda$, and there is a quasistationary Markov chain $(X_t,Y_t)_{t \ge 0}$ with 
    absorbing state $(\rho_A, \rho_B)$ and absorption probability $\lambda$ such that $(X_t)_{t \ge 0} \eqdist \Xoriginal$, $(Y_t)_{t \ge 0} \eqdist \Yoriginal$, and $f(X_t) = g(Y_t)$ for all $t \ge 0$. In particular, 
    the absorption times of the coordinates coincide. The marginal chains $X$ and $Y$ may be taken to be conditionally independent given $(f(X_t))_{t \ge 0}$.  
\end{cor}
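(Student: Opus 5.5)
The plan is to reduce Corollary~\ref{cor:quasistationary} to Theorem~\ref{thm:stationary} by removing the absorbing state and passing to a stationary chain on two-sided time. For a quasistationary chain $X$ with absorbing state $\rho$, absorption probability $\lambda$, and quasistationary law $\mu$ on $A\setminus\{\rho\}$, I will use the Markov property and homogeneity to show that the killing probability from $a$ is the constant $\lambda$ for $\mu$-a.e.\ $a$, at least along the reachable part of the chain. Indeed, $\mathbb{P}(X_{t+1}\neq\rho\mid X_t\neq\rho)=1-\lambda$ for every $t$, and the law of $X_t$ given $X_t\ne\rho$ is always $\mu$. Thus $\mu P$ restricted to $A\setminus\{\rho\}$ equals $(1-\lambda)\mu$. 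This means the sub-stochastic kernel $Q=P|_{A\setminus\{\rho\}}$ has $\mu$ as a left eigenvector with eigenvalue $1-\lambda$. I will then introduce the stationary Markov chain $\hat X$ on $A\setminus\{\rho\}$ started from $\mu$ with kernel obtained by conditioning on survival. Since survival is not necessarily constant per state, I will instead work directly with path laws. The law of $(X_0,\dots,X_n)$ conditioned on $\{X_n\ne\rho\}$ has cylinder probabilities $\mu(a_0)Q(a_0,a_1)\cdots Q(a_{n-1},a_n)/(1-\lambda)^n$. These form a consistent family only if $Q\mathbf 1=(1-\lambda)\mathbf 1$ $\mu$-a.e., and I will verify this as follows. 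Conditional on $X_t=a$, survival to $t+1$ has probability $Q\mathbf 1(a)$. The second moment requirement that $\mathbb{P}(X_{t+2}\ne\rho)=(1-\lambda)^2$ does not by itself force constancy, so constancy has to come from the hypothesis in a different way.

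Here I expect the main obstacle, and I will resolve it through the image chains. First, equality of absorption probabilities is immediate: $f^{-1}(\rho_C)=\{\rho_A\}$ gives $\mathbb{P}(X_t\ne\rho_A)=\mathbb{P}(Z_t\ne\rho_C)$. To avoid needing constant killing rates, I will use a different stationary object: the two-sided process obtained as the Yaglom-type limit. Concretely, I will define the chain $\tilde X$ on $(A\setminus\{\rho_A\})\times\mathbb{Z}_{\ge0}$, or more simply a time-reversed construction. The cleaner route is to pass to the \emph{killed-time} description. Let $\tau$ be the absorption time, which is geometric($\lambda$) and, by quasistationarity, has the law of $X_{\tau-1}$ independent of $\tau$. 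I will encode the pre-absorption path in reverse: set $W_s=X_{\tau-1-s}$ for $0\le s<\tau$. I will then try to show that reversal produces a stationary-like structure, so that the pair (path, age) becomes a stationary Markov chain on $\mathbb{Z}$. This is a regeneration construction: concatenate i.i.d.\ copies of the pre-absorption excursion. Each copy starts from $\mu$ and has geometric length, with the absorbing step replaced by a ``restart'' marker. Stationarizing the resulting renewal chain produces a stationary Markov chain $\hat X$ on $A$, with $\rho_A$ as the restart state of the homogeneous kernel $P'$ equal to $P$ off $\rho_A$ and $P'(\rho_A,\cdot)=\lambda\delta_{\rho_A}+(1-\lambda)\mu$. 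The quasistationarity relation $\mu Q=(1-\lambda)\mu$ shows that $\pi=\lambda\delta_{\rho_A}+(1-\lambda)\mu$ is stationary for $P'$, and this holds exactly as computed in the first paragraph. Applying the same construction to $Y$ and $Z$, the maps $f,g$ still intertwine: the law of $(f(\hat X_t))$ is the law of $\hat Z$. This follows because $f(\hat X)$ is built from i.i.d.\ pieces with law of $f$(excursion of $X$), which is the law of an excursion of $Z$ from $\mu_C=f_*\mu_A$. The step $f^{-1}(\rho_C)=\{\rho_A\}$ is used here to ensure restart times match.

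Next I will apply Theorem~\ref{thm:stationary} to $\hat X,\hat Y,\hat Z$ to get a stationary Markov coupling $(\hat X_t,\hat Y_t)_{t\in\mathbb{Z}}$ with $f(\hat X)=g(\hat Y)$ and conditional independence given $\hat Z$. Since $f(\hat X_t)=\rho_C$ forces $\hat X_t=\rho_A$ and $\hat Y_t=\rho_B$, the two coordinates restart simultaneously. I will then condition on $\hat X_{-1}=\rho_A$ and $\hat X_0\ne\rho_A$, and stop at the next visit to $\rho$, freezing thereafter. By the Markov property of the pair chain, the resulting $(X_t,Y_t)_{t\ge0}$ is a homogeneous Markov chain. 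Its kernel is the pair kernel off $(\rho_A,\rho_B)$, with $(\rho_A,\rho_B)$ absorbing. Its marginals are excursions, and hence have the laws of $\Xoriginal$ and $\Yoriginal$. Its absorption probability is $\lambda$, and quasistationarity follows from that of the marginal. Conditional independence given $f(X)$ is inherited, because the conditioning event is $\hat Z$-measurable and the excursion is a function of $\hat Z$-determined time windows.
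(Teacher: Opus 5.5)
Your route is the paper's: turn the absorbing state into a restart state, apply Theorem~\ref{thm:stationary} to the resulting stationary chains, and cut one excursion out of the coupled chain. Your variations are harmless for the Markov property and the marginals. The holding kernel $P'(\rho_A,\cdot)=\lambda\delta_{\rho_A}+(1-\lambda)\mu$ works as well as the paper's $\hat P_X(\rho_A,\cdot)=\upalpha_X$. Starting from the excursion entrance law, rather than from the paper's normalized restriction of the stationary law, is what makes conditional independence given $f(X)$ come out right. The opening discussion of constant killing rates can be dropped, since only $\mu Q=(1-\lambda)\mu$ is used.

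The genuine gap is the sentence ``quasistationarity follows from that of the marginal''. Quasistationarity is a property of the joint law. If $\nu$ is the law of $W_0$ and $Q_W$ is the pair kernel restricted off $(\rho_A,\rho_B)$, you need $\nu Q_W=(1-\lambda)\nu$. Quasistationarity of $X$ and $Y$ only gives the two coordinate projections of this identity. In terms of the stationary coupled chain $\hat W$, excursions must start with the same law as the normalized restriction of its stationary law off $(\rho_A,\rho_B)$. Each coordinate has this property by construction, but nothing in Theorem~\ref{thm:stationary} passes it to the joining.

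In fact the identity fails, and cannot be rescued while keeping conditional independence. Take the following data:
\begin{itemize}
\item $A=B=\{\rho,1,2,3a,3b\}$ and $C=\{\rho,1,2,3\}$, with $f=g$ collapsing $3a,3b$ to $3$;
\item from each alive state, jump to $\rho$ with probability $\lambda$, and otherwise move $1\to 3a$, $2\to 3b$, and from $3a$ or $3b$ to $1$ or $2$ with probability $\tfrac12$ each;
\item start $\Xoriginal$ and $\Yoriginal$ uniformly on the four alive states.
\end{itemize}
The uniform law is invariant for the alive dynamics, so $\Xoriginal$ is quasistationary. Also $f$ satisfies Dynkin's condition, so $f(\Xoriginal)$ is a quasistationary Markov chain, and all hypotheses hold.

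In any coupling with $f(X_t)=g(Y_t)$ we have $\mathbb{P}(W_1=(3a,3b))=0$, because $X_1=3a$ forces $X_0=1$ while $Y_1=3b$ forces $Y_0=2$. So a quasistationary coupling must have $\mathbb{P}(W_0=(3a,3b))=0$; in fact the only quasistationary coupling here is $X=Y$. But $3a$ and $3b$ have equal initial weight and the same image-path law. Hence conditional independence given $Z$ forces $\mathbb{P}(W_0=(3a,3b))=\tfrac14\,\mathbb{P}(Z_0=3)=\tfrac18$.

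Your $W$ is exactly this conditionally independent coupling. It is a Markov coupling with the right marginals and a common absorption time, all of which Theorem~\ref{thm:DTHWMC} applied directly already gives. But it is not quasistationary.

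The paper's proof passes over the same point (``Then $W$ \dots\ has the required properties''). Its initial law fares no better: there $\upalpha(3a,3b)=\lambda/8>0$, and conditional independence is lost as well. Quasistationarity of the conditionally independent coupling does hold under an extra hypothesis. One example is when $\mathbb{P}(\Xoriginal_t=\cdot\mid f(\Xoriginal_0),\dots,f(\Xoriginal_t))$ depends only on $f(\Xoriginal_t)$, as happens for exact lumpings. Some such input is needed to close the argument.
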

\end{samepage}

The above corollary may be applied to any two quasistationary chains $\Xoriginal$ and $\Yoriginal$ that have equal absorption probabilities, by taking $Z$ to be a two-state quasistationary chain with the same absorption probability.

We have the following natural extension of Theorem~\ref{thm:DTHWMC} to three or more chains, which is easily proved by sequential applications of Theorem~\ref{thm:DTHWMC}. 
\begin{cor}
\label{cor:DTHWMC (of many processes)} 
    Consider homogeneous Markov chains  $\Xoriginal^{(1)},\ldots,\Xoriginal^{(k)}$ with time indexed by $t\geq 0$, and countable state spaces $A_1,\dots,A_k$, respectively.
    For a homogeneous Markov chain $\Zoriginal$ with state space $C$ and maps $f_i: A_i \to C$, suppose that
    \[
        \bigl(f_i(\Xoriginal^{(i)}_t)\bigr)_{t \ge 0} \eqdist \Zoriginal\quad\text{for all } i=1,\dots,k\,.
    \]
    Then there exists a homogeneous Markov chain $(\Xcopy^{(1)}_t,\dots,\Xcopy^{(k)}_t)_{t\geq0}$ such that
   $ 
        (\Xcopy^{(i)}_t)_{t \ge 0} \eqdist \Xoriginal^{(i)}
    $ for all $i$,
    and
    $
        f_i(X^{(i)}_t)=f_j(X^{(j)}_t)
    $ for all $i,j$ and $t\geq 0$. 
    Writing $\Zcopy_t=f_1(\Xcopy^{(1)}_t)=\dots =f_k(\Xcopy^{(k)}_t)$ (so that $\Zcopy \eqdist \Zoriginal$), the marginal processes
    $X^{(1)},\dots,X^{(k)}$ may be taken to be conditionally independent given~$Z$.
\end{cor}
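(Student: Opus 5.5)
Induction on $k$, applying Theorem~\ref{thm:DTHWMC} once at each step. For $k=1$ there is nothing to prove: take $X^{(1)}=\Xoriginal^{(1)}$. Now suppose we have a homogeneous Markov chain $W=(X^{(1)}_t,\dots,X^{(k-1)}_t)_{t\ge0}$ on the countable space $A_1\times\dots\times A_{k-1}$ with the required marginals, with $f_i(X^{(i)}_t)$ all equal, and with the coordinates conditionally independent given $Z$. Define $F:A_1\times\dots\times A_{k-1}\to C$ by $F(a_1,\dots,a_{k-1})=f_1(a_1)$. On the support of $W$ (which we may take as the state space) $F$ agrees with every $f_i(a_i)$, and $(F(W_t))_{t\ge0}\eqdist\Zoriginal$. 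Apply Theorem~\ref{thm:DTHWMC} to the chains $W$, $\Xoriginal^{(k)}$ and $\Zoriginal$ with maps $F$ and $f_k$. This gives a homogeneous Markov chain $(W'_t,X^{(k)}_t)_{t\ge0}$ with $W'\eqdist W$, $X^{(k)}\eqdist\Xoriginal^{(k)}$ and $F(W'_t)=f_k(X^{(k)}_t)$, where $W'$ and $X^{(k)}$ are conditionally independent given $Z=(F(W'_t))_t$. Unpacking $W'$ gives the $k$-tuple chain, which is homogeneous Markov since it is a relabelling of the pair chain. The marginals are correct because $W'\eqdist W$ as processes, and the identities $f_i(X^{(i)}_t)=f_j(X^{(j)}_t)$ follow.

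The only real check is conditional independence of all $k$ coordinates given $Z$. The process $Z$ is a measurable function of $W'$, and since $W'\eqdist W$, the joint law of $(W',Z)$ equals that of $(W,F(W))$. So the first $k-1$ coordinates of $W'$ are conditionally independent given $Z$. Combining this with the conditional independence of $W'$ and $X^{(k)}$ given $Z$, the conditional law of $(X^{(1)},\dots,X^{(k)})$ given $Z$ factorizes as
\[
\mathcal{L}(W'\mid Z)\otimes\mathcal{L}(X^{(k)}\mid Z)=\bigotimes_{i=1}^{k}\mathcal{L}(X^{(i)}\mid Z),
\]
which is the claimed joint conditional independence.

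The one step needing care is that $W$ satisfies the hypotheses of Theorem~\ref{thm:DTHWMC}: its state space is a finite product of countable sets, hence countable; it is a homogeneous Markov chain with a specified initial law; and its image under $F$ is equal in law to the Markov chain $\Zoriginal$. All three hold by the induction hypothesis, so I expect no genuine obstacle beyond bookkeeping.
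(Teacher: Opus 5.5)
Your proposal is correct and follows the same route as the paper: induction on $k$, applying Theorem~\ref{thm:DTHWMC} to the $(k-1)$-tuple chain (with the map $F=f_1\circ\proj^1$) and to $\Xoriginal^{(k)}$. You also spell out the conditional-independence step that the paper leaves implicit. The paper uses $W'\eqdist W$ to transfer conditional independence of the first $k-1$ coordinates given $Z=F(W')$, and combines it with conditional independence of $W'$ and $X^{(k)}$ given $Z$ to get mutual conditional independence of all $k$ coordinates; your argument is the same.
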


In this paper, a \textbf{Markov chain} always has countable state space, and time is usually indexed by $\mathbb{N} = \{0,1,2,\dots\}$, or by $\mathbb{Z}$ in the stationary case. Occasionally we will refer to continuous-time pure-jump Markov chains with time indexed by $[0,\infty)$, but they are not the focus of this paper.  We regard a Markov chain as a stochastic process, and hence to specify its distribution requires knowledge of the distribution of its state at time $0$ as well as its transition probabilities. 
We write $\MC(\upalpha,P)$ for the discrete-time homogeneous Markov chain (\textbf{DTHMC}) with time indexed by $\mathbb{N}$ that is specified by initial distribution $\upalpha$ and transition matrix $P$.

\subsection*{The construction}

The initial distribution $\upalpha$ and transition matrix $P$ of a coupling $\MC(\upalpha,P)$ satisfying the conclusions of Theorem~\ref{thm:DTHWMC} can be described explicitly, as we explain now.  Assume without loss of generality that each state in $A$ has positive probability of being visited by $\Xoriginal$, and similarly for $B$ and $\Yoriginal$.  Let
\begin{equation}
\label{eqn:delta}
    \Delta = \Delta(f,g)=\bigl\{(a,b) \in A \times B:\, f(a) = g(b)\bigr\},
\end{equation}
and define a matrix $R$ with rows and columns indexed by $\Delta$ as follows.
\begin{equation}\label{eq: R def}
    R\bigl((a,b),(a',b')\bigr) = 
    \begin{cases}
        \frac{\displaystyle{P_X(a,a')P_Y(b,b')}}{\displaystyle{P_Z(f(a),f(a'))}} & \text{if $P_Z(f(a),f(a')) > 0$,}\\
        0 & \text{otherwise.}
    \end{cases}
\end{equation}

Let $\mathbf{1}_\Delta$ denote the column vector of $1$s indexed by $\Delta$. We will show that the coordinatewise limit
\[
    \varphi = \lim_{m \to \infty} R^m \mathbf{1}_{\Delta}
\]
exists and is a non-negative right eigenvector of $R$ with eigenvalue $1$. Let
\[
    \Delta' = \bigl\{ (a,b) \in \Delta \,:\, \varphi(a,b) > 0\bigr\}\,.
\]

For $(a,b) \in \Delta'$, let $c = f(a) = g(b)$ and 
\begin{equation}\label{eq: coupled pi def}
    \upalpha(a,b) = 
    \begin{cases} 
        \frac{\displaystyle{\upalpha_X(a)\upalpha_Y(b)}}{\displaystyle{\upalpha_Z(c)}} \,\varphi(a,b) & \text{if $\upalpha_Z(c) > 0$}, \\
        0 & \text{if $\upalpha_Z(c) = 0.$}
    \end{cases}
\end{equation}
For $(a,b), (a',b') \in \Delta'$, let
\begin{equation}
\label{eq: coupled P def}
    P((a,b),(a',b')) = \frac{1}{\varphi(a,b)} R\bigl((a,b),(a',b')\bigr)\,\varphi(a',b')\,.
\end{equation}

Then $\upalpha$ is a probability vector and $P$ is a stochastic matrix, indexed by~$\Delta'$. Moreover, $\MC(\upalpha,P) = (\Xcopy_t, \Ycopy_t)_{t\geq0}$ satisfies the claims of Theorem~\ref{thm:DTHWMC}, including the conditional independence statement. Notice that equation~\eqref{eq: coupled P def} looks formally like a Doob $h$-transform, but it is not one in general because $R$ is not necessarily stochastic.
The above claims about $R$, $\upalpha$, and $P$ are proved in Section~\ref{sec:2}, as part of the proof of Theorem~\ref{thm:DTHWMC} itself.

Example~\ref{ex:stationarymarginals} in Section~\ref{sec:examples} shows that when $\Xoriginal$ and $\Yoriginal$ are stationary, the coupled chain $(\Xcopy_t,\Ycopy_t)_{t \ge 0}$ constructed in our proof of  Theorem~\ref{thm:DTHWMC} is not necessarily stationary.

Our proof of Theorem~\ref{thm:stationary} is similar to that of Theorem~\ref{thm:DTHWMC}. We construct a coupling by specifying an explicit transition matrix and stationary distribution, and prove that it has the claimed conditional independence property. To explain the construction, let $\Xoriginal$, $\Yoriginal$, and $\Zoriginal$ have (stationary) time-$0$ distributions $\uppi_X, \uppi_Y$, and $\uppi_Z$, and transition matrices $P_X$, $P_Y$, and $P_Z$. We may assume without loss of generality that the supports of $\uppi_X$, $\uppi_Y$, and $\uppi_Z$ are all of $A$, $B$, and $C$, respectively, since any state not in these supports is never visited. Let the time-reversals of $\Xoriginal$, $\Yoriginal$, and $\Zoriginal$ have transition matrices  $P_X^{\rev}$, $P_Y^\rev$, and $P_Z^\rev$. Let $\Delta$, $R$, and $\varphi$ be defined as they were in the setting of Theorem~\ref{thm:DTHWMC}, and define $R^\rev$ and $\varphi^\rev$ analogously using $P_X^\rev$, $P_Y^\rev$, and $P_Z^\rev$. We will show that the conditionally independent coupling described in Theorem~\ref{thm:stationary} takes its values in the set 
\[
    \Delta'' = \{(a,b) \in \Delta \,:\, \varphi(a,b) > 0 \text{ and } \varphi^\rev(a,b) > 0\}.
\]
It has transition matrix $P$ given by equation~\eqref{eq: coupled P def}, i.e.~the same formula as for the non-stationary case, but restricted to $\Delta''$. Its stationary distribution~$\uppi$ (the distribution of $(X_0,Y_0)$) is given by
\begin{equation}
\label{eq: stationary dist}
    \uppi(a,b) = 
    \begin{cases}        
        \displaystyle{
        \frac{\uppi_X(a)\uppi_Y(b)}{\uppi_Z(c)}
        }
        \varphi(a,b)\varphi^\rev(a,b) 
        & \text{if $\uppi_Z(c) > 0$}, \\
        0 
        & \text{if $\uppi_Z(c) = 0.$}
    \end{cases}
\end{equation}

\begin{remark}
    Theorem~\ref{thm:stationary} may be stated in the language of ergodic theory. Let $\Xoriginal$ and $\Yoriginal$ be two-sided Markov shifts over a countable alphabet, having a common factor $\Zoriginal$ that is also a two-sided Markov shift, where the factor maps are  one-block maps. Theorem~\ref{thm:stationary} states that the \emph{relatively independent joining} of $\Xoriginal$ and $\Yoriginal$ with respect to $\Zoriginal$ is also a two-sided Markov shift for which the projection maps to $\Xoriginal$ and $\Yoriginal$ are one-block maps. For further information about joinings, see~\cite{delaRue2006}. 
\end{remark}

\begin{remark}\label{cond-remark}
    To prove Theorems~\ref{thm:DTHWMC} and~\ref{thm:stationary} we will not take the conditional independence property as the \emph{definition} of the coupled chain. This would be possible using the disintegration theorem, since the processes $\Xoriginal$, $\Yoriginal$, and $\Zoriginal$ are random variables taking their values in Polish spaces. Indeed, the Gluing Lemma~\cite[Lemma 7.6]{villani2021topics} ensures that the conditionally independent coupling exists and is unique in law. However, the resulting abstract representation of the coupling 
    does not seem to lend itself readily to proving the Markov property.  (An exception is the special case when the entire process $\Zoriginal$ has countable support, i.e.\ where absorption occurs almost surely.  In this case elementary conditioning suffices, and a direct proof appears in  the PhD thesis of the third author~\cite{russell2025thesis}.) Instead, we construct the coupled chain by giving a transition matrix and initial condition, as explained above, and check that the marginals have the correct law and are conditionally independent given their common image. 
    Without the conditional independence condition, there may exist other couplings satisfying the remaining conclusions of Theorem~\ref{thm:DTHWMC}; for instance see Example~\ref{ex:biasedRW} in Section~\ref{sec:examples}.
\end{remark}
\begin{remark} \textbf{Strong and exact lumpings.} \label{remark-strong-exact}
    As stated in Remark~\ref{not-just-strong}, under the additional assumption that the maps $f$ and $g$ in Theorem~\ref{thm:DTHWMC} are strong lumpings, a much simpler proof is available.  More precisely, in this case we can simply couple $X$ and $Y$ to be conditionally independent given $Z$ at each time step, as described in Remark~\ref{not-just-stepwise}.  In the resulting coupling, the projection maps from $(X,Y)$ to $X$ and $Y$ are also strong lumpings.  More interestingly, we show in Proposition~\ref{lem: g strong proj1 strong} that if just one map, $f$, is assumed to be a strong lumping then the projection from $(X,Y)$ to $X$ is a strong lumping.  We prove a similar fact, Proposition~\ref{cor: g exact proj1 exact}, for the related notion of exact lumping.  In each case the coupling is the same as the one in Theorem~\ref{thm:DTHWMC}, but the proof is much simpler.
    
    Further, we show in Section~\ref{ss:intertwining} that if $f$ is a strong lumping and $g$ is an exact lumping then the coupling of Theorem~\ref{thm:DTHWMC} is an \emph{intertwining} of the Markov chains $\Xoriginal$ and $\Yoriginal$, as introduced by Diaconis and Fill in their seminal paper \cite{diaconis1990duality} about strong stationary times.
\end{remark}

\subsection*{Organisation of the paper}

In Sections~\ref{S:weak}--\ref{ss:intertwining} we explain some key concepts of lumping and coupling of Markov chains and describe how our main theorems relate to existing results about couplings of Markov chains. These sections may be read independently of Sections~\ref{sec:2} and \ref{sec:others}, which give the proofs of our main theorems.

Section~\ref{S:weak} describes the concepts of weak lumping and weak Markovian couplings, briefly discusses the special cases of strong lumping and exact lumping, and states Propositions~\ref{lem: g strong proj1 strong} and \ref{cor: g exact proj1 exact}, the refinements of Theorem~\ref{thm:DTHWMC} which apply when $f$ or $g$ is either a strong or exact lumping, as described in Remark~\ref{remark-strong-exact} above.
In Section~\ref{ss:SMC} we give further details about strong lumping, define strong Markovian couplings, discuss their previous appearances in the literature, and prove Proposition~\ref{lem: g strong proj1 strong}. 
In Section~\ref{ss:EMCs} we give further details about exact lumping, define exact Markovian couplings, and prove Proposition~\ref{cor: g exact proj1 exact}.
In Section~\ref{ss:intertwining} we address intertwinings and the case where $f$ is a strong lumping and $g$ is an exact lumping, as discussed in Remark~\ref{remark-strong-exact}.

In Section~\ref{sec:2}, we prove the main result, Theorem~\ref{thm:DTHWMC}.  We give a brief outline in Section~\ref{ss:outline}, followed by the detailed steps in Sections~\ref{ss:defW_m}--\ref{ss:condindep}. 
Section~\ref{sec:2} does not depend on anything from Sections~\ref{S:weak}--\ref{ss:intertwining}.  

Section~\ref{sec:others} contains the proofs of the additional results, Theorem~\ref{thm:stationary} and Corollaries~\ref{thm:DTIWMC},~\ref{cor:quasistationary},~and~\ref{cor:DTHWMC (of many processes)}.  The proof of Theorem~\ref{thm:stationary} follows a similar pattern to the proof of Theorem~\ref{thm:DTHWMC}, making adjustments to account for the two-sidedness and stationarity of the chains.  Corollary~\ref{cor:quasistationary} is deduced from~Theorem~\ref{thm:stationary}, while Corollaries~\ref{thm:DTIWMC} and \ref{cor:DTHWMC (of many processes)} follow from Theorem~\ref{thm:DTHWMC}.  
Section~\ref{sec:examples} describes several examples illustrating our main results.

\subsection*{Forthcoming work.}

In a forthcoming companion paper~\cite{crane2026part2} we analyse the situation of Theorem~\ref{thm:DTHWMC} in more detail, using the algebraic characterization of weak lumping of homogeneous Markov chains that was given by Gurvits and Ledoux~\cite{gurvits2005markov}. This enables us to identify an explicit set of initial distributions $\upalpha'$ for which $\MC(\upalpha',P)$ is a coupling of $\Xoriginal$ and $\Yoriginal$, that in some cases contains distributions other than $\upalpha$. We also prove some further results there, for example an extension to uniformizable chains in continuous time and some partial converse results which characterize when given homogeneous Markov chains may be coupled by a strong Markovian coupling, or by an exact Markovian coupling whose states are restricted to a given block-diagonal subset of the product of their state spaces.

In another forthcoming paper~\cite{crane2026algorithm} we study the general problem of coupling any two homogeneous Markov chains so that the coupled process is a homogeneous Markov chain taking its values in a given subset of the possible states and making its transitions in a given subset of the possible transitions. The main result there is that this problem is decidable when the state spaces are finite. One of the examples in~\cite{crane2026algorithm} almost fits the setup of Theorem~\ref{thm:DTHWMC}, except that the common image process is not Markov; in that example the conditionally independent coupling is not Markov, but nevertheless there does exist another coupling taking values in $\Delta(f,g)$ that is Markov.

\section{Weak lumping and weak Markovian couplings}\label{S:weak}
We now explain some key concepts which will help us to describe how our main theorems relate to known results about couplings of Markov chains. 
For any Markov chain $X$, and a function $f$ on its state space, we use the shorthand $f(X)$ for the process $(f(X_t))_{t\geq0}$. For a countable set $U$, let $\PM(U)$ denote the set of probability distributions on $U$, and let $\SM(U)$ denote the set of stochastic matrices with rows and columns indexed by $U$. 

Recall that for a measurable map $f: A \to B$ and a probability measure $\upalpha$ on $A$, the \emph{pushforward} of $\upalpha$ under $f$ is the probability measure $f_*(\upalpha)$ on $B$ defined by $f_*(\upalpha)(S) = \upalpha(f^{-1}(S))$ for all measurable sets $S \subseteq B$. In this paper we work with countable state spaces, where all subsets are measurable, so we shall not need to mention measurability again.

\begin{dfn}[Weak lumping]
\label{def:WL}
    We say that a stochastic matrix $P\in\SM(A)$ is \textbf{weakly lumpable} with respect to a surjective map $f: A \twoheadrightarrow B$ if there exists $\upalpha\in\PM(A)$ such that $f(\MC(\upalpha,P))$ is a homogeneous Markov chain taking values in $B$. In this case, we say that $\MC(\upalpha,P)$ \textbf{lumps weakly} under $f$, and that $f$ is a \textbf{weak lumping} of $\MC(\upalpha,P)$. If the transition matrix of $f(\MC(\upalpha,P))$ is $Q$ then we say that $P$ is weakly lumpable to $Q$ under $f$, and we say that $\MC(\upalpha,P)$ lumps weakly to $\MC(f_*(\upalpha),Q)$ under $f$. 
\end{dfn}

\begin{remark}\label{rem:Ore}
 There is a category whose objects are DTHMCs with countable state spaces and whose morphisms are weak lumpings. (The composition of two weak lumpings is a weak lumping.) Theorem~\ref{thm:DTHWMC} says that this category has the \emph{Ore property}. This means that for any three objects $X$, $Y$, and $Z$ in the category with morphisms $f: X \to Z$ and $g: Y \to Z$, there exists a \emph{semi-pullback}, i.e. an object $W$ and morphisms $h:W \to X$ and $i:W \to Y$ such that $f \circ h = g \circ i$.  (A semi-pullback need not satisfy the universal property that must be satisfied by a pullback.)
\end{remark}

 An elegant algebraic characterization of weak lumpability of DTHMCs with finite state spaces was given by Gurvits and Ledoux \cite{gurvits2005markov} in 2005, following a more complicated characterization given by Rubino and Sericola~\cite{rubino1991finite} in 1991.  
 There are many other terms in the literature associated with lumping of Markov chains, for example \emph{aggregation}, \emph{projection}, \emph{collapsing}, \emph{Markov functions}, and \emph{bisimulation}. Another relevant term is \emph{lifting} of Markov chains, which refers to the situation where a chain $Y$ is given and a chain $X$ and a map $f$ are constructed such that $f(X) \eqdist Y$ and $X$ is easier to understand than $Y$. This diversity of terminology reflects the fact that Markov chain lumping is a very useful idea that has been discovered and used in many different areas of pure and applied probability and statistics. General processes of the form $f(X)$ where $X$ is Markov, and $f$ is a deterministic or randomizing function, are known as \emph{hidden Markov chains} and they are used in many applications.

\begin{dfn}[Weak Markovian coupling]
\label{def:WMC}
    Suppose $\Xoriginal=\MC(\upalpha_X,P_X)$ and $\Yoriginal=\MC(\upalpha_Y,P_Y)$ are DTHMCs on state spaces $A$ and $B$. A \textbf{weak Markovian coupling} (WMC) of $\Xoriginal$ and $\Yoriginal$ is a DTHMC $(\Xcopy,\Ycopy)=\MC(\mu,P)$ on $A \times B$ whose marginal processes satisfy $\Xcopy \eqdist \Xoriginal$ and $\Ycopy \eqdist \Yoriginal$.   
\end{dfn}

Any WMC is a Markov chain that lumps weakly under the projections
\[
    \proj^1: A \times B \to A \;\text{ and }\; \proj^2: A \times B \to B.
\] 
The coupling described in Theorem~\ref{thm:DTHWMC} is a WMC.

\begin{remark}
 The analogue of WMCs in the context of continuous-time homogeneous Markov chains with finite state spaces are called \emph{weak Markov copulae} by Bielecki, Jakubowski and Niew\k{e}g\l{}owski \cite{bielecki2013intricacies} and more recently \emph{weak Markov chain structures} in their book \cite[\S7.2]{BJNbook}. In both works, the property that the marginal processes of the coupled Markov chain are Markov processes in the filtrations that they generate (i.e. the fact that the projection maps are weak lumpings) is termed \emph{weak Markov consistency}. The book \cite{BJNbook} sets up technical machinery that can be used for various applications of continuous-time Markov chain couplings for example to model dependence between multiple financial instruments or dependence between time series in seismology. See also the PhD thesis of Chang \cite{Chang2017thesis} for further examples and applications.
\end{remark}

In Sections~\ref{ss:SMC},~\ref{ss:EMCs}, and ~\ref{ss:intertwining} below, we discuss three important special cases of the situation of Theorem~\ref{thm:DTHWMC}, involving two special types of weak lumping which we now describe.

If $X$ is a Markov chain and for each time $t$ the distribution of $f(X_{t+1})$ given $X_t$ is a function only of $f(X_t)$, then $f$ is called a \textbf{strong lumping}. See Section~\ref{ss:SMC} for other characterizations of strong lumping, one of which is Dynkin's criterion. Not all weak lumpings are strong lumpings.

\begin{dfn}[Strong lumping]
\label{def:SL} 
    Let $f: A \twoheadrightarrow C$ be a surjective map between countable sets. We say that a stochastic matrix $P\in\SM(A)$ \textbf{lumps strongly} under $f$ if there exists a stochastic matrix $Q\in\SM(C)$ such that for all $\upalpha\in\PM(A)$, the chain $\MC(\upalpha,P)$ lumps weakly to $\MC(f_*(\upalpha),Q)$. In this case, we say that $P$ lumps strongly under $f$ to $Q$.
\end{dfn}

\begin{remark}
    Other names for strong lumping in the literature are \emph{ordinary lumping} (e.g. in \cite{Buchholz}) and often (potentially confusingly) just \emph{lumping} or \emph{lumpability}. In the concurrency theory literature, strong lumpings are sometimes called \emph{simulation morphisms}, and the equivalence relation whose classes are the fibres of $f$ is called a \emph{bisimulation}. These concepts apply to a generalization of Markov chains in concurrency theory called \emph{labelled Markov chains}. These model both random evolution independent of history and interaction with a non-deterministic controller and observer. 
\end{remark}

In the setting of Theorem~\ref{thm:DTHWMC}, when $f$ is a strong lumping it turns out that the algebra simplifies. We will prove the following result in Section~\ref{ss:SMC}; the proof is much simpler than the proof of Theorem~\ref{thm:DTHWMC}. 
\begin{prop}
\label{lem: g strong proj1 strong}
    In the setting of Theorem~\ref{thm:DTHWMC}, suppose $f$ is a strong lumping of $P_X$ and $B$ contains no states that are almost surely never visited by $\Yoriginal$. Then $R$ is a stochastic matrix, so $\varphi \equiv 1$, $\Delta'=\Delta$, and $P=R$. Moreover, the projection map $\proj^2: A \times B \to B$ is a strong lumping of $P$ to $P_Y$.
\end{prop}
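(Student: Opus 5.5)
The plan is a direct computation. I will show that the row sums of $R$ collapse by grouping the target states $(a',b')\in\Delta$ according to their common image $c'=f(a')=g(b')$. Throughout, fix $(a,b)\in\Delta$ and write $c=f(a)=g(b)$.

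First, identify the lumped matrix of $P_X$ with $P_Z$ where it matters. By Definition~\ref{def:SL} there is $Q\in\SM(C)$ such that $\MC(\upalpha_X,P_X)$ lumps weakly to $\MC(f_*(\upalpha_X),Q)$. By hypothesis this lumped chain equals $\Zoriginal$ in law. Hence $Q(c,c')=P_Z(c,c')$ for every $c$ with $\mathbb{P}(\Zoriginal_t=c)>0$ for some $t$.

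The state $c=g(b)$ is such a state. Since $b$ is visited by $\Yoriginal$, there is a $t$ with $\mathbb{P}(\Yoriginal_t=b)>0$, and then $\mathbb{P}(\Zoriginal_t=c)\ge \mathbb{P}(\Yoriginal_t=b)>0$. The same $t$ gives the second fact I need:
\[
0<\mathbb{P}(\Yoriginal_t=b)\,P_Y\bigl(b,g^{-1}(c')\bigr)\le \mathbb{P}\bigl(g(\Yoriginal_t)=c,\ g(\Yoriginal_{t+1})=c'\bigr)=\mathbb{P}(\Zoriginal_t=c)\,P_Z(c,c').
\]
So if $P_Z(c,c')=0$ then $P_Y(b,b')=0$ for all $b'\in g^{-1}(c')$. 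This fact is where the hypothesis that every state of $B$ is visited enters.

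Next, compute the fibre sums of $R$. For $b'\in B$ with $g(b')=c'$ and $P_Z(c,c')>0$,
\[
\sum_{a':\,f(a')=c'} R\bigl((a,b),(a',b')\bigr)=\frac{P_Y(b,b')}{P_Z(c,c')}\sum_{a'\in f^{-1}(c')}P_X(a,a')=\frac{P_Y(b,b')\,Q(c,c')}{P_Z(c,c')}=P_Y(b,b').
\]
When $P_Z(c,c')=0$, the left side is $0$ by the definition~\eqref{eq: R def}, and the right side $P_Y(b,b')$ is also $0$ by the displayed inequality. So in all cases
\[
\sum_{a'}R\bigl((a,b),(a',b')\bigr)=P_Y(b,b') .
\]
Summing over $b'$ then gives $R\mathbf{1}_\Delta=\mathbf{1}_\Delta$, so $R$ is stochastic.

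The remaining conclusions follow at once. Since $R\mathbf{1}_\Delta=\mathbf{1}_\Delta$, we get $R^m\mathbf{1}_\Delta=\mathbf{1}_\Delta$ for all $m$, so $\varphi\equiv1$ and $\Delta'=\Delta$. Then~\eqref{eq: coupled P def} reduces to $P=R$.

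Finally, the fibre-sum identity says that $\sum_{a'}P((a,b),(a',b'))=P_Y(b,b')$ depends on $(a,b)$ only through $b=\proj^2(a,b)$. This is Dynkin's row-sum criterion for strong lumping, which Section~\ref{ss:SMC} recalls as one of the characterizations of strong lumping. The criterion shows directly from Definition~\ref{def:SL} that $f(\MC(\upalpha,P))$ is $\MC(f_*\upalpha,Q)$ for every initial law $\upalpha$. Applying it with the map $\proj^2$ shows that $\proj^2$ is a strong lumping of $P$ to $P_Y$.

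The only delicate point is the bookkeeping about which rows of $P_Z$ are actually determined by the law of $\Zoriginal$, together with the zero-division case $P_Z(c,c')=0$. Both are settled by the visited-state hypothesis on $B$, through the inequality displayed above.
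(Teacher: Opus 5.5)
Your proof is correct and is essentially the paper's own argument. It has the same three ingredients: the visited-state hypothesis on $B$ forcing $P_Y(b,b')=0$ whenever $P_Z(g(b),g(b'))=0$; grouping the row sums of $R$ by the common image $c'$ and applying Dynkin's condition for $f$; and the fibre-sum identity $\sum_{a'}P((a,b),(a',b'))=P_Y(b,b')$, which gives Dynkin's condition for $\proj^2$. You are also slightly more careful than the paper in identifying the lumped matrix $Q$ with $P_Z$ only on rows $c$ actually visited by $\Zoriginal$.

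One small slip: the leading ``$0<$'' in your displayed chain is false when $P_Y(b,g^{-1}(c'))=0$, so it should be dropped. The conclusion you draw from that chain only needs $\mathbb{P}(\Yoriginal_t=b)>0$.
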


\begin{remark}
    The special case of Proposition~\ref{lem: g strong proj1 strong} in which \emph{both} $f$ and $g$ are strong lumpings was proved by Edalat~\cite{edalat1999}, as a special case of a similar statement in which the state spaces were allowed to be non-discrete. In Edalat's main result the state spaces must be analytic spaces and the maps $f$ and $g$ must be Borel measurable strong lumpings. In other words, Edalat showed that a category of DTHMCs with strong lumpings as morphisms is a category with the Ore property, as mentioned in Remark~\ref{rem:Ore}. We warn the reader that Edalat uses the word `stationary' for what we call `time-homogeneous' when describing Markov processes. Edalat applied his result to prove the transitivity of a bisimulation-based equivalence relation between labelled Markov processes; see \cite{desharnais2002} for the context. 
\end{remark}

Couplings of Markov chains in which both projection maps to the marginal chains are strong lumpings have been considered in many other works under a variety of names, as we discuss in Section~\ref{ss:SMC}. 

Another well-known sufficient condition for $f(X_t)_{t \ge 0}$ to be a homogeneous Markov chain is the \textbf{exact lumping} condition, also known (especially in the context of continuous time) as the \emph{Pitman--Rogers intertwining condition}. Section~\ref{ss:EMCs} below gives more details. It seems that the vast majority of weak lumpings in the probability literature are either strong lumpings or exact lumpings (and the exact lumpings tend to be the `surprising' ones.) So it is of interest to know what  happens when one of the inputs to Theorem~\ref{thm:DTHWMC} is an exact lumping. We will prove the following corollary in Section~\ref{ss:EMCs}.

\begin{prop}\label{cor: g exact proj1 exact}
In the setting of Theorem~\ref{thm:DTHWMC}, suppose that $f$ is an exact lumping of $\MC(\upalpha_X,P_X)$ to $\MC(\upalpha_Z,P_Z)$, and that $B$ contains no states that are almost surely never visited by $\Yoriginal$. Let $\MC(\upalpha,P)$ be the coupling defined by equations~\eqref{eq: coupled pi def}~and~\eqref{eq: coupled P def}. Then the projection $\proj^2:A \times B \to B$ is an exact lumping of $\MC(\upalpha,P)$ to $\MC(\upalpha_Y,P_Y)$.
\end{prop}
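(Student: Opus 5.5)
The plan is to avoid analysing $R$ and $\varphi$ directly. Instead I will compute the law of a finite stretch of the coupled chain $\MC(\upalpha,P)$ from equations~\eqref{eq: coupled pi def} and~\eqref{eq: coupled P def}, and use the exact lumping property of $f$ to sum out the $X$-coordinate. I will use the characterization of exact lumping (to be recalled in Section~\ref{ss:EMCs}) via the kernel $\Lambda(c,a)=\upalpha_X(a)/\upalpha_Z(c)$ for $f(a)=c$. In this form, $f$ is an exact lumping of $\MC(\upalpha_X,P_X)$ exactly when, for every $t$ and every image path $c_0,\dots,c_t$ of positive probability,
\[
\mathbb{P}\bigl(f(\Xoriginal_s)=c_s \ (s\le t),\ \Xoriginal_t=a\bigr)=\mathbb{P}\bigl(\Zoriginal_s=c_s\ (s\le t)\bigr)\,\Lambda(c_t,a).
\]
Equivalently, it holds when $\Lambda P_X=P_Z\Lambda$ on the relevant rows. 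The target statement is the analogous property for $\proj^2$, with kernel $\Lambda'(b,(a,b))=\upalpha(a,b)/\upalpha_Y(b)$.

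First, telescoping~\eqref{eq: coupled P def} gives, for paths $(a_s,b_s)_{s\le t}$ in $\Delta'$ with common image $c_s$,
\[
\mathbb{P}\bigl((X_s,Y_s)=(a_s,b_s),\, s\le t\bigr)=\frac{\upalpha_X(a_0)\upalpha_Y(b_0)}{\upalpha_Z(c_0)}\prod_{s<t}\frac{P_X(a_s,a_{s+1})P_Y(b_s,b_{s+1})}{P_Z(c_s,c_{s+1})}\;\varphi(a_t,b_t).
\]
This formula also holds for paths leaving $\Delta'$, trivially, because $\varphi$ vanishes there and the product of $R$-entries can only vanish further along. I will now fix the $Y$-path $b_0,\dots,b_t$ and the final $X$-state $a$, and sum over $a_0,\dots,a_{t-1}$. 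The $Y$-factors and the $P_Z$-denominators then factor out, and what remains is exactly $\mathbb{P}(f(\Xoriginal_{s})=c_s,\,s\le t,\ \Xoriginal_t=a)$. By exact lumping this equals $\mathbb{P}(\Zoriginal_{0..t}=c_{0..t})\Lambda(c_t,a)$, which cancels the $P_Z$-denominators and $\upalpha_Z(c_0)$. The result is
\[
\mathbb{P}\bigl(Y_{0..t}=b_{0..t},\,X_t=a\bigr)=\mathbb{P}\bigl(\Yoriginal_{0..t}=b_{0..t}\bigr)\,\Lambda(c_t,a)\,\varphi(a,b_t).
\]

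Next, summing over $a$ and using that $Y\eqdist\Yoriginal$ (already established in Theorem~\ref{thm:DTHWMC}) gives $\sum_a\Lambda(g(b),a)\varphi(a,b)=1$ for every $b$ reachable by $\Yoriginal$. Here the hypothesis that every state of $B$ is visited is used. Hence the conditional law of $(X_t,Y_t)$ given $Y_0,\dots,Y_t$ is $\Lambda'(Y_t,\cdot)$, where
\[
\Lambda'(b,(a,b))=\Lambda(g(b),a)\,\varphi(a,b).
\]
This depends only on $Y_t$, not on $t$ or on the earlier $Y$-path. At $t=0$, comparison with~\eqref{eq: coupled pi def} shows $\Lambda'(b,(a,b))=\upalpha(a,b)/\upalpha_Y(b)$ whenever $\upalpha_Y(b)>0$, so $\Lambda'$ is the kernel required by the definition of exact lumping. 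Finally, I will derive the intertwining identity $\Lambda'P=P_Y\Lambda'$ in the standard way: compute $\mathbb{P}(Y_{0..t+1}=b_{0..t+1},\,(X,Y)_{t+1}=\cdot\,)$ in two ways, once via the Markov property of $P$ applied to the time-$t$ conditional law, and once via the time-$(t+1)$ conditional law.

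The main obstacle is bookkeeping with supports. One issue is the set of states $b$ that are visited only at positive times, where $\Lambda'$ must be defined by the $t\ge1$ formula rather than via $\upalpha$. Another is checking that the two definitions agree when both apply, and that the exact-lumping definition used in Section~\ref{ss:EMCs} accepts a kernel specified this way. I would handle this by working throughout with the path-probability form of exact lumping, which avoids dividing by $\upalpha_Y(b)$.
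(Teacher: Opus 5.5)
Your overall strategy is sound, but the kernel you build it on is wrong. The hypothesis gives a family $(\nu_c)_{c\in C}$ with $\upalpha_X=\sum_c\upalpha_Z(c)\nu_c$. This identifies $\nu_c(a)$ with $\upalpha_X(a)/\upalpha_Z(c)$ only when $\upalpha_Z(c)>0$. For an image path ending at a state $c_t$ outside the support of $\upalpha_Z$, the conditional law of $\Xoriginal_t$ is $\nu_{c_t}$, which cannot be read off from $\upalpha_X$, and your $\Lambda(c_t,a)=\upalpha_X(a)/\upalpha_Z(c_t)$ is $0/0$.

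This is the typical situation, not an edge case. In Example~\ref{ex:biasedRW} one has $\upalpha_Z=\delta_0$, so your displayed path identity is meaningless at every time $t\ge1$ with $c_t\neq0$, whereas the correct law is $\nu_n\propto p^n\delta_n+q^n\delta_{-n}$. Your closing remark about states $b$ visited only at positive times does not repair this. The ``$t\ge1$ formula'' $\Lambda'(b,(a,b))=\Lambda(g(b),a)\varphi(a,b)$ still contains $\upalpha_X(a)/\upalpha_Z(g(b))$.

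The fix is to set $\Lambda(c,\cdot)=\nu_c$. Then $\mathbb{P}(f(\Xoriginal_s)=c_s\ (s\le t),\ \Xoriginal_t=a)=\mathbb{P}(\Zoriginal_s=c_s\ (s\le t))\,\nu_{c_t}(a)$ follows by induction on $t$ from $\upalpha_X=\sum_c\upalpha_Z(c)\nu_c$ and the exact lumping equation. Also $\Lambda'(b,(a,b))=\nu_{g(b)}(a)\varphi(a,b)$ is then one formula valid for all $b\in B$, and $\upalpha=\sum_b\upalpha_Y(b)\Lambda'(b,\cdot)$ holds automatically. The support bookkeeping you were worried about disappears.

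With that repair your argument is correct, and it differs from the paper's. The paper uses the same family, $\mu_y(a,y)=\nu_{g(y)}(a)\varphi(a,y)$, but checks the exact lumping equation directly in one step. Writing $P((a,b),(a',b'))=\varphi(a,b)^{-1}R((a,b),(a',b'))\varphi(a',b')$ cancels $\varphi(a,b)$. The equation then reduces to $\sum_a\nu_c(a)P_X(a,a')=P_Z(c,c')\nu_{c'}(a')$ when $P_Z(c,c')>0$, and both sides vanish when $P_Z(c,c')=0$. This is shorter and needs neither the path identity nor the marginal law of $Y$.

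Your route is longer and relies on $Y\eqdist\Yoriginal$ from Theorem~\ref{thm:DTHWMC}. In return it delivers the normalization $\sum_a\nu_{g(b)}(a)\varphi(a,b)=1$ for every $b\in B$, which Definition~\ref{def:EL} requires. The paper's proof does not check this explicitly: it only calls the $\mu_y$ ``measures'', and Lemma~\ref{lem: pi is a coupling} gives total mass $1$ only for $b$ in the support of $\upalpha_Y$.
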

See Remark~\ref{rem:stationaryexactlumping} below for a related statement about the stationary setting of Theorem~\ref{thm:stationary}.

\begin{remark}
     Our results above can be adapted to uniformizable countable-state Markov chains in continuous time, by uniformizing the marginal chains and embedding the discrete-time couplings into continuous time. In the interest of brevity we have not included these arguments.
\end{remark}

\section{Strong lumping and strong Markovian couplings}
\label{ss:SMC}

\begin{lem}[Dynkin's criterion in discrete time]
\label{lem:dynkin}
    The stochastic matrix $P\in\SM(A)$ lumps strongly under $f: A \twoheadrightarrow C$ if and only if for each $z,z'\in C$,
    \begin{equation} \label{eq: Dynkin condition}
        \sum_{x'\in f^{-1}(z')}{P(x,x')}
        \quad
        \text{ takes the same value for all $x\in f^{-1}(z)$.}
    \end{equation}
\end{lem}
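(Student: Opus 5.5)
We prove the two directions separately, working with finite-dimensional distributions. Write $Q_x(z') = \sum_{x'\in f^{-1}(z')} P(x,x')$ for $x\in A$ and $z'\in C$. Condition~\eqref{eq: Dynkin condition} says that $Q_x(z')$ depends on $x$ only through $f(x)$, so that we may set $Q(z,z') := Q_x(z')$ for any $x\in f^{-1}(z)$. This $Q$ is automatically a stochastic matrix on $C$, since $\sum_{z'} Q_x(z') = 1$, and it is well defined on all of $C$ because $f$ is surjective.

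For sufficiency, assume~\eqref{eq: Dynkin condition}, fix an arbitrary $\upalpha\in\PM(A)$ and let $X=\MC(\upalpha,P)$. We compute $\mathbb{P}(f(X_0)=z_0,\dots,f(X_n)=z_n)$ by summing the path probabilities $\upalpha(x_0)P(x_0,x_1)\cdots P(x_{n-1},x_n)$ over all $x_i\in f^{-1}(z_i)$. The innermost sum, over $x_n$, equals $Q_{x_{n-1}}(z_n)=Q(z_{n-1},z_n)$, which does not depend on $x_{n-1}$ and can therefore be pulled out of the remaining sums. Inducting backwards in the same way gives
\[
\mathbb{P}\bigl(f(X_0)=z_0,\dots,f(X_n)=z_n\bigr) = f_*(\upalpha)(z_0)\,Q(z_0,z_1)\cdots Q(z_{n-1},z_n).
\]
These are exactly the finite-dimensional distributions of $\MC(f_*(\upalpha),Q)$. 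Hence $P$ lumps strongly to $Q$ in the sense of Definition~\ref{def:SL}.

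For necessity, assume there is a $Q\in\SM(C)$ such that $\MC(\upalpha,P)$ lumps weakly to $\MC(f_*(\upalpha),Q)$ for every $\upalpha$. Fix $z$, $z'$ and any $x\in f^{-1}(z)$, and take $\upalpha=\delta_x$. Then $f_*(\upalpha)=\delta_z$, and comparing the one-step law gives
\[
\mathbb{P}\bigl(f(X_1)=z'\bigr) = Q_x(z') \quad\text{and}\quad \mathbb{P}\bigl(f(X_1)=z'\bigr) = Q(z,z').
\]
Hence $Q_x(z')=Q(z,z')$ for every $x\in f^{-1}(z)$, which is condition~\eqref{eq: Dynkin condition}.

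The only point needing care is that Definition~\ref{def:SL} requires a single matrix $Q$ that works for all initial distributions. Necessity uses exactly this uniformity when applied to point masses, and sufficiency produces such a $Q$ independent of $\upalpha$. The rest of the argument is a routine telescoping of sums.
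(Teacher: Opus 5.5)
Your proof is correct. It is the standard argument: telescoping the sums over fibres backwards gives sufficiency, and specializing to point masses $\upalpha=\delta_x$ gives necessity. The paper does not prove this lemma itself but cites \cite[Thm.~4.1.18]{rubino2014markov}, and its remark after Lemma~\ref{lem: strong lumping cond indep} uses the same point-mass device to recover Dynkin's condition, so your route matches the intended one.
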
 
\begin{lem}
\label{lem: strong lumping cond indep}
    If $P \in \SM(A)$ lumps strongly under $f:A \twoheadrightarrow C$ and $X = \MC(\upalpha,P)$, where $\upalpha$ is any probability distribution on $A$,  then for each $t \ge 0$, $X_t$ and $(f(X_s))_{s > t}$ are conditionally independent given $f(X_t)$.
\end{lem}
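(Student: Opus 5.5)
The plan is to show directly that, conditionally on $X_0,\dots,X_t$, the future image process $(f(X_s))_{s>t}$ has a law that depends only on $f(X_t)$. Conditional independence of $X_t$ and the future images given $f(X_t)$ follows from this by integrating out $X_0,\dots,X_{t-1}$. By the Markov property of $X$, conditionally on $X_t=x$ the process $(X_{t+s})_{s\ge 0}$ is $\MC(\delta_x,P)$. It therefore suffices to prove the following claim: for every $x\in A$ with $f(x)=z$, the law of $(f(X_{t+s}))_{s\ge 1}$ under $\MC(\delta_x,P)$ is the law of the path $(Z_s)_{s\ge1}$ of $\MC(\delta_z,Q)$. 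Here $Q$ is the matrix given by Definition~\ref{def:SL}. This law depends on $x$ only through $z$.

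To prove the claim, I take $\upalpha=\delta_x$ in Definition~\ref{def:SL}. The strong lumping property gives that $f(\MC(\delta_x,P))$ is $\MC(f_*(\delta_x),Q)=\MC(\delta_z,Q)$. In particular, for all $z_1,\dots,z_n\in C$,
\[
\mathbb{P}\bigl(f(X_{t+1})=z_1,\dots,f(X_{t+n})=z_n \,\big|\, X_t=x\bigr)=Q(z,z_1)Q(z_1,z_2)\cdots Q(z_{n-1},z_n).
\]
The only thing to check is that conditioning on the whole past $X_0,\dots,X_t$ reduces to conditioning on $X_t$. This is the Markov property of $X$ applied to the cylinder event in the future coordinates.

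Next I conclude. Fix $z$ with $\mathbb{P}(f(X_t)=z)>0$, a state $x\in f^{-1}(z)$, and a cylinder event $E$ for $(f(X_s))_{s>t}$. Write $q_z(E)$ for the product above, which depends only on $z$. Then
\[
\mathbb{P}\bigl(X_t=x,\,E\bigr)=\mathbb{P}(X_t=x)\,q_z(E).
\]
Summing over $x\in f^{-1}(z)$ gives $\mathbb{P}(f(X_t)=z,\,E)=\mathbb{P}(f(X_t)=z)\,q_z(E)$. Dividing the two identities yields
\[
\mathbb{P}\bigl(X_t=x,\,E \,\big|\, f(X_t)=z\bigr)=\mathbb{P}\bigl(X_t=x\,\big|\, f(X_t)=z\bigr)\,\mathbb{P}\bigl(E\,\big|\, f(X_t)=z\bigr).
\]
Cylinder events form a $\pi$-system generating the $\sigma$-algebra of the future image process. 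A $\pi$–$\lambda$ argument therefore extends this factorisation to all events, which gives the stated conditional independence.

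There is no real obstacle here. The one point needing care is that Definition~\ref{def:SL} quantifies over all initial laws, and this is exactly what lets us use the point mass $\delta_x$. Alternatively, Lemma~\ref{lem:dynkin} gives the same product formula by induction on $n$: summing over $x_n\in f^{-1}(z_n)$ last, the row sums $\sum_{x'\in f^{-1}(z')}P(x,x')=Q(f(x),z')$ are constant on fibres.
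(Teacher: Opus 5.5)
Your proof is correct and complete. The paper gives no proof of this lemma and simply cites \cite[Prop.~2.25]{crane2024weak}, so there is no in-paper argument to compare against.

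Your route is a clean, self-contained substitute. You use the Markov property at time $t$, then Definition~\ref{def:SL} with $\upalpha=\delta_x$, so that the conditional law of the future images depends on $x$ only through $f(x)$. You then sum over the fibre and extend from cylinder events by $\pi$--$\lambda$. Specialising to point masses $\delta_x$ is the same device the paper uses in its remark on the converse.

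Two small observations:
\begin{itemize}
\item Your first step actually gives a slightly stronger statement: the whole past $(X_0,\dots,X_t)$, not just $X_t$, is conditionally independent of $(f(X_s))_{s>t}$ given $f(X_t)$.
\item The Dynkin-criterion alternative you sketch is equally valid. The innermost sum over $x_n\in f^{-1}(z_n)$ equals $Q(z_{n-1},z_n)$ whatever $x_{n-1}\in f^{-1}(z_{n-1})$ is, and induction on $n$ gives the product formula.
\end{itemize}
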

A proof of Lemma~\ref{lem:dynkin} may be found in \cite[Thm.~4.1.18]{rubino2014markov}, and Lemma~\ref{lem: strong lumping cond indep} may be found in~\cite[Prop. 2.25]{crane2024weak}.   A converse of Lemma~\ref{lem: strong lumping cond indep} holds, that if $X_t$ and $f(X_s)_{s \ge t}$ are conditionally independent of $f(X_t)$ for every $t \ge 0$ for \emph{every} choice of $\upalpha \in \PM(A)$, then in particular this is true when $t = 0$ and $\upalpha = \delta_x$, for any $x \in A$, so Dynkin's condition holds and therefore $P$ lumps strongly under $f$. On the other hand, if we only know that this conditional independence property holds for some particular $\upalpha$, then to conclude that $P$ lumps strongly under $f$ we need an additional hypothesis, for example that $P$ is irreducible. 

\begin{dfn}[Strong Markovian coupling]
\label{def:SMC}
    For two stochastic matrices $P_X\in\SM(A)$ and $P_Y\in\SM(B)$, a \textbf{strong Markovian coupling} (SMC) of $P_X$ and $P_Y$ is a stochastic matrix $P\in\SM(A \times B)$ such that, for all $\upalpha\in\PM(A \times B)$, the marginals of $\MC(\upalpha,P)$ are DTHMCs driven by $P_X$ and $P_Y$. In other words, $P$ lumps strongly under $\proj^1$ to $P_X$ and also lumps strongly under $\proj^2$ to $P_Y$.
\end{dfn}

\begin{remark}
Every SMC combines with an arbitrary initial distribution on $A \times B$ to yield a WMC, since strong lumpings combine with arbitrary initial distributions to give weak lumpings.
\end{remark}

\begin{proof}[Proof of Proposition~\ref{lem: g strong proj1 strong}]
We prove that the matrix $R$ defined by equation~\eqref{eq: R def} is stochastic, starting from Dynkin's condition for $f$. 
Because $g(\Yoriginal) \eqdist \Zoriginal$ and  each element of $B$ is visited by $\Yoriginal$ with positive probability at some time, if $P_Z(c,c') = 0$ then for every $b \in g^{-1}(c)$ and $b' \in g^{-1}(c')$ we must have $P_Y(b,b') = 0$. (For otherwise there would be a positive probability that at some time $\Yoriginal$ makes a transition from $b$ to $b'$, in which case $g(Y)$ makes a transition from $c$ to $c'$, which is a null event under the law of $\Zoriginal$.) 
For any $(a,b) \in \Delta$ with $f(a) = g(b) = c$, we have
\begin{align*}
\begin{split}
    \sum_{(a',b') \in \Delta} &R\bigl((a,b),(a',b')\bigr)  =  \sum_{\substack{c' \in C\,:\\P_Z(c,c') > 0}}\sum_{\substack{a' \in f^{-1}(c')\\ b' \in g^{-1}(c')}} \frac{P_X(a,a')P_Y(b,b')}{P_Z(c,c')}
    \\
    & =  \sum_{\substack{c' \in C\,:\\P_Z(c,c') > 0}} \biggr(\sum_{b' \in g^{-1}(c')} P_Y(b,b')\biggr)\biggl(\frac{1}{P_Z(c,c')}\sum_{a' \in f^{-1}(c')} P_X(a,a')\biggr)
    \\
    & = \sum_{\substack{c' \in C\,:\\P_Z(c,c') > 0}} \sum_{b' \in g^{-1}(c')}P_Y(b,b') = \sum_{b' \in B'} P_Y(b,b') = 1.
\end{split} 
\end{align*}
We have shown that $R$ is stochastic. This implies that $\varphi_m \equiv 1$ for all $m$ and hence $\varphi \equiv 1$, so $\Delta' = \Delta$ and $P = R$. 

Finally, we must prove that $\proj^2: \Delta \to B$ is a strong lumping of $P$ to $P_Y$, which we do by verifying Dynkin's condition for $\proj^2$. Let $b,b' \in B$, $c = g(b)$, $c' = g(b')$, and let $a \in f^{-1}(c)$. If $P_Z(c,c') = 0$ then $P_Y(b,b') = 0$ and $P((a,b),(a',b') = 0$ by definition for every $(a',b') \in (\proj^2)^{-1}(b')$, so Dynkin's condition holds for this case. Now suppose $P_Z(c,c') > 0$. We have
\begin{align*}
\begin{split}
\sum_{(\widehat{a},\widehat{b}) \in (\proj^2)^{-1}(b')} P\bigl((a,b),&(\widehat{a},\widehat{b})\bigr)  =  \sum_{\widehat{a} \in f^{-1}(c')} \frac{P_X(a,\widehat{a})P_Y(b,b')}{P_Z(c,c')}\\
& =  P_Y(b,b')\cdot\frac{1}{P_Z(c,c')} \sum_{\widehat{a} \in f^{-1}(c)} P_X(a,\widehat{a})
 =  P_Y(b,b'),
\end{split}
\end{align*}
where the last equality follows from Dynkin's condition for $P_X$ to lump strongly under $f$ to $P_Z$.
\end{proof}

\subsection{Concepts related to strong Markovian coupling}
\label{ss: coupling background}

 SMCs are related to several existing concepts in the literature.
The concept of weak Markovian coupling is distinct from the concept of \emph{Markovian coupling} defined in \cite[p.~65]{levin2009markov}; in our language the Markovian couplings defined there are SMCs in which $A = B$ and $P_X = P_Y$. However, in some other literature, for example~\cite{angel2013avoidance,bottcher2017arxiv}, the term `Markovian coupling' has been used to denote a WMC. This inconsistency is the reason why we have chosen to avoid the term altogether.
 
Several papers discuss continuous-time analogs of SMCs under various names, with various conditions on the Markov processes being coupled. For example, Ball and Yeo \cite{ball1993lumpability} call the SMC condition \emph{marginalisability}, Chen \cite{chen1986jump} calls it \emph{marginality}, and (in the setting of time-inhomogeneous continuous-time Markov chains with finite state spaces, Bielecki et al.~\cite{bielecki2013intricacies} call an SMC a \emph{strong Markov copula}, and in their monograph~\cite{BJNbook} a \emph{strong Markov chain structure}.

 In the companion paper~\cite{crane2026part2} we discuss the concept of \emph{co-adapted} couplings that is defined in Burdzy and Kendall \cite{burdzy2000efficient} and also studied in \cite{connor2014complete,connor2008hypercube}. We show there that SMCs give rise to co-adapted couplings. Although co-adapted couplings of DTHMCs are not necessarily WMCs, we show that conversely when a WMC $(X_t,Y_t)_{t \ge 0} = \MC(\alpha,P)$ with state space $A \times B$ is a co-adapted coupling of Markov chains $\MC(\alpha_X,P_X)$ and $\MC(\alpha_Y,P_Y)$ with state spaces $A$ and $B$ respectively, and it visits each state in $A \times B$ with positive probability, then $P$ is an SMC of $P_X$ and $P_Y$. This result appears to be related to \cite[Thm. 1.17]{bielecki2013intricacies}.

L\'opez and Sanz \cite{lopez2002markovian} study couplings of non-explosive continuous-time Markov chains with generator matrices specified. They give necessary and sufficient linear programming conditions for the existence of a Markov chain coupling for which a given subset $K$ of $A \times B$ is preserved and the marginal projections $\proj^1:A \times B \to A$ and $\proj^2: A \times B \to B$ are strong lumpings (in the continuous-time sense). They call such couplings $K$-couplings. Saying that $K$ is preserved means that if $x \in K$ and $P_t(x,y) > 0$ then $y \in K$, where $P_t$ is the transition kernel of the coupled Markov chain. In our discrete-time setting, we are interested in SMCs whose transition matrix preserves the particular set $\Delta(f,g)$ defined by \eqref{eqn:delta}. 
Proposition~\ref{lem: g strong proj1 strong} implies that in the setting of Theorem~\ref{thm:DTHWMC}, if $f$ and $g$ are strong lumpings then there exists an SMC of $\Xoriginal$ and $\Yoriginal$ that preserves $\Delta(f,g)$. Example~\ref{ex: 3 states each affirmative} in Section~\ref{sec:examples} shows that in the setting of Theorem~\ref{thm:DTHWMC} a WMC may exist even when no SMC  exists that preserves $\Delta(f,g)$.

\section{Exact lumping and exact Markovian couplings}\label{ss:EMCs}

In this section we define another special type of coupling of Markov chains. It is related to another sufficient condition for weak lumping called \textbf{exact lumping}.

\begin{dfn}[Exact lumpability and exact lumping]
\label{def:EL}
    Let $f:A \twoheadrightarrow B$ be a surjection between countable sets. 
    We say that $P\in\SM(A)$ is \textbf{exactly lumpable} under $f$ with respect to $(\nu_z)_{z \in B}$ if $\nu_z \in \PM(f^{-1}(z))$ for each $z \in B$ and there exists $Q \in \SM(B)$ such that for every $z,z' \in B$ and every $x' \in f^{-1}(z')$, the \emph{exact lumping equation} holds:
    \begin{equation}
    \label{eq: EL discrete}
        \sum_{x\in f^{-1}(z)} \nu_z(x) P(x,x')  = Q(z,z') \nu_{z'}(x')\,.   
    \end{equation}
    We say $P$ is exactly lumpable under $f$ to $Q$ if such a family $(\nu_b)_{b \in B}$ exists for which \eqref{eq: EL discrete} holds. 
    
    For $\upalpha \in \PM(A)$, we say that
    $\MC(\upalpha,P)$ \textbf{lumps exactly} under $f$ to the chain $\MC(f_*(\upalpha),Q)$
    if there exists a family $(\nu_z)_{z\in B}$ and a matrix $Q$, that together satisfy equation~\eqref{eq: EL discrete}, such that $\upalpha$ is a convex combination of the measures $(\nu_z)_{z \in B}$. 
\end{dfn}

\begin{lem}
\label{lem:ELimpliesWL}
    If $\MC(\upalpha,P)$ lumps exactly under $f$ then it lumps weakly under~$f$.
\end{lem}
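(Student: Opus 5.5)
We will show directly that, under the hypotheses of Definition~\ref{def:EL}, the law of $f(\MC(\upalpha,P))$ on cylinder sets equals that of $\MC(f_*(\upalpha),Q)$. This is enough: a process whose finite-dimensional distributions agree with those of a homogeneous Markov chain is itself a homogeneous Markov chain.

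Fix the family $(\nu_z)_{z\in B}$ and $Q$ satisfying \eqref{eq: EL discrete}, and write $\upalpha=\sum_{z}\lambda_z\nu_z$ with $\lambda_z\ge 0$ and $\sum_z\lambda_z=1$. Since $\nu_z$ is supported on $f^{-1}(z)$, summing $\upalpha$ over $f^{-1}(z)$ gives $f_*(\upalpha)(z)=\lambda_z$. For each $n\ge 0$ and each word $z_0,\dots,z_n\in B$ we consider the row vector
\[
\mu_n(x)=\mathbb{P}\bigl(X_n=x,\ f(X_0)=z_0,\dots,f(X_n)=z_n\bigr),\qquad x\in A .
\]
The key claim is that
\[
\mu_n=\lambda_{z_0}\,Q(z_0,z_1)\cdots Q(z_{n-1},z_n)\,\nu_{z_n}.
\]
We prove it by induction on $n$.

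For the base case $n=0$, $\mu_0$ is $\upalpha$ restricted to $f^{-1}(z_0)$, which equals $\lambda_{z_0}\nu_{z_0}$ because the supports of the $\nu_z$ are disjoint. For the inductive step, the Markov property gives, for each $x'\in f^{-1}(z_{n+1})$,
\[
\mu_{n+1}(x')=\sum_{x\in f^{-1}(z_n)}\mu_n(x)P(x,x').
\]
We substitute the inductive hypothesis and apply \eqref{eq: EL discrete} with $z=z_n$ and $z'=z_{n+1}$. This produces the extra factor $Q(z_n,z_{n+1})$ and the measure $\nu_{z_{n+1}}(x')$. For $x'\notin f^{-1}(z_{n+1})$ both sides vanish.

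Finally, we sum $\mu_n$ over $x\in f^{-1}(z_n)$ and use that $\nu_{z_n}$ is a probability measure. This gives
\[
\mathbb{P}\bigl(f(X_0)=z_0,\dots,f(X_n)=z_n\bigr)=f_*(\upalpha)(z_0)\,Q(z_0,z_1)\cdots Q(z_{n-1},z_n),
\]
which is exactly the cylinder probability for $\MC(f_*(\upalpha),Q)$. Hence $f(X)\eqdist\MC(f_*(\upalpha),Q)$, so $\MC(\upalpha,P)$ lumps weakly to $\MC(f_*(\upalpha),Q)$ under $f$.

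There is no real obstacle. The only points needing care are that $\upalpha$ restricted to each fibre is a multiple of $\nu_z$, which relies on the disjoint supports, and the bookkeeping in the induction.
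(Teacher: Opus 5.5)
Your proof is correct. The induction establishing $\mathbb{P}(X_n=x,\ f(X_0)=z_0,\dots,f(X_n)=z_n)=\lambda_{z_0}Q(z_0,z_1)\cdots Q(z_{n-1},z_n)\,\nu_{z_n}(x)$ is the standard argument: it uses that $\upalpha$ restricted to each fibre is $\lambda_z\nu_z$, and then applies \eqref{eq: EL discrete} one step at a time. The paper gives no proof of its own here and defers to \cite{crane2024weak}. Dividing your identity by the cylinder probability also yields the conditional-independence statement of Lemma~\ref{lem: exact lumping cond indep}, for initial laws that are mixtures of the $\nu_z$.
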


\begin{lem}
\label{lem: exact lumping cond indep}
    If $P \in \SM(A)$ lumps exactly under $f:A \twoheadrightarrow C$ and $X = \MC(\upalpha,P)$, where $\upalpha$ is any probability distribution on $A$, then for each $t \ge 0$, $X_t$ and $(f(X_s))_{s < t}$ are conditionally independent given $f(X_t)$.
\end{lem}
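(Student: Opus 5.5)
The plan is to prove a stronger statement by induction on $t$. For every $t\ge 0$ and every image path $(z_0,\dots,z_t)$ of positive probability, I will show
\[
\mathbb{P}\bigl(X_t=x \mid f(X_0)=z_0,\dots,f(X_t)=z_t\bigr)=\nu_{z_t}(x)\qquad (x\in f^{-1}(z_t)).
\]
The right-hand side depends on the conditioning only through $z_t$. So the conditional law of $X_t$ given $(f(X_s))_{s\le t}$ is a function of $f(X_t)$ alone, which is exactly the conditional independence of $X_t$ and $(f(X_s))_{s<t}$ given $f(X_t)$. Here $(\nu_z)_{z\in C}$ and $Q$ are the data from Definition~\ref{def:EL} witnessing that $P$ lumps exactly under $f$.

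The inductive step is a direct computation with the exact lumping equation~\eqref{eq: EL discrete}. Suppose the claim holds at time $t$, and let $E$ be the event $\{f(X_0)=z_0,\dots,f(X_t)=z_t\}$. By the Markov property of $X$, for $x'\in f^{-1}(z_{t+1})$,
\[
\mathbb{P}\bigl(E,\;X_{t+1}=x'\bigr)=\mathbb{P}(E)\sum_{x\in f^{-1}(z_t)}\nu_{z_t}(x)P(x,x')=\mathbb{P}(E)\,Q(z_t,z_{t+1})\,\nu_{z_{t+1}}(x').
\]
Summing over $x'\in f^{-1}(z_{t+1})$ gives $\mathbb{P}(E,\,f(X_{t+1})=z_{t+1})=\mathbb{P}(E)\,Q(z_t,z_{t+1})$. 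Dividing the two expressions gives the claim at time $t+1$. As a by-product, this computation shows that $f(X)$ is Markov with transition matrix $Q$, which is consistent with Lemma~\ref{lem:ELimpliesWL}.

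The base case $t=0$ is where the initial distribution $\upalpha$ enters, and I expect it to be the only delicate point. At $t=0$ the conditioning set $(f(X_s))_{s<0}$ is empty, so the conditional independence itself holds trivially for any $\upalpha$. However, the induction needs the stronger identity: the conditional law of $X_0$ given $f(X_0)=z_0$ must equal $\nu_{z_0}$. I would read the hypothesis ``$P$ lumps exactly under $f$'' for the chain $X=\MC(\upalpha,P)$, in the sense of Definition~\ref{def:EL}, as supplying exactly this: $\upalpha$ is a convex combination $\sum_z w_z\nu_z$, and since the $\nu_z$ have disjoint supports $f^{-1}(z)$, we get $\upalpha(\cdot\mid f^{-1}(z_0))=\nu_{z_0}$.

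If instead $\upalpha$ is completely arbitrary, I would first test the case $t=1$ directly. There the conditional law of $X_1$ given $(f(X_0),f(X_1))=(z_0,z_1)$ is proportional to $\sum_x\upalpha(x)P(x,\cdot)$ restricted to the fibre of $z_1$. This need not depend on $z_1$ alone, so checking this case is the first thing I would do to see exactly how much of $\upalpha$ the argument needs. Once the base case is in place, the induction closes and the lemma follows for all $t\ge 0$.
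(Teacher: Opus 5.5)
Your induction is correct, and it is the standard discrete-time Rogers--Pitman argument for this fact. The paper gives no proof of its own here; it defers to an earlier reference. Propagating $\mathbb{P}(X_t=x\mid f(X_0)=z_0,\dots,f(X_t)=z_t)=\nu_{z_t}(x)$ through the exact lumping equation yields both this lemma and, as you note, Lemma~\ref{lem:ELimpliesWL}.

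The point you leave open should be closed, and the answer is negative. If $P$ is only assumed exactly lumpable and $\upalpha$ is allowed to be arbitrary, the lemma is false. The hypothesis therefore has to be read as ``$\MC(\upalpha,P)$ lumps exactly'' in the sense of Definition~\ref{def:EL}, meaning $\upalpha$ is a convex combination of the $\nu_z$. That is exactly what your base case uses.

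For a counterexample, take the biased walk of Example~\ref{ex:biasedRW} with $f=|\cdot|$, but start it from $\delta_1$. This is not a mixture of the $\nu_n$, since $\nu_1=p\delta_1+q\delta_{-1}$. Given $f(X_1)=2$ and $f(X_2)=1$, we have $X_2=1$ surely. Given $f(X_1)=0$ and $f(X_2)=1$, we have $X_2=1$ or $-1$ with probabilities $p$ and $q$. Both conditioning events have positive probability, so $X_2$ is not conditionally independent of $(f(X_0),f(X_1))$ given $f(X_2)$.

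Also, in your proposed $t=1$ test the sum must run over $x\in f^{-1}(z_0)$ only. Summed over all $x$, the expression is just the conditional law of $X_1$ given $f(X_1)=z_1$, and it cannot depend on $z_0$ at all. It is precisely this restriction to the fibre over $z_0$ that lets the past leak in when $\upalpha$ is not a mixture of the $\nu_z$.
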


For proofs of Lemmas~\ref{lem:ELimpliesWL} and \ref{lem: exact lumping cond indep}, see~\cite[\S2.4~and~\S2.6]{crane2024weak}.

\begin{remark} 
    The earliest use that we have found of the term `exact lumping' is from 1984, in Schweitzer  \cite{schweitzer1984}. There, the state spaces were finite and each probability measure $\nu_z$ was uniform over $f^{-1}(z)$. For finite state spaces and without assuming uniformity of each $\nu_z$ over $f^{-1}(z)$, the condition~\eqref{eq: EL discrete} was given in matrix form earlier, in 1976, by Kemeny and Snell~\cite[Thm.~6.4.4]{kemeny1976finite} as a sufficient condition for weak lumping, but it was not given a name there. Equation~\eqref{eq: EL discrete} is the discrete time version of a well-known sufficient condition for weak lumping of continuous-time homogeneous Markov processes given by Pitman and Rogers \cite{Rogers1981markov} in 1981. Let $\Lambda$ be the \textbf{link matrix}, with rows indexed by $B$ and columns indexed by $A$, defined by 
    \[
        \Lambda(z,x) = \nu_z(x)\mathbf{1}(f(x) = z))\,.
    \]
    Then equation~\eqref{eq: EL discrete} may be written as 
    \[
        \Lambda P = Q \Lambda.
    \]
    This equation is called an \emph{algebraic intertwining}. The sufficient condition for weak lumping in \cite{Rogers1981markov} is the above algebraic intertwining equation, where $P$ and $Q$ are transition kernels rather than transition matrices. Another way to say that $X$ lumps exactly under $f$ to $Z$ that appears in the literature (e.g. in \cite{arnaudon2024intertwining,diaconis1990duality,swart2011intertwining}) is that $X$ is \emph{intertwined on top of} $Z$. This is a special case of a more general concept of intertwining of Markov chains, which we discuss in Section~\ref{ss:intertwining}.  
\end{remark}  

  The similarity between Lemmas~\ref{lem: strong lumping cond indep} and~\ref{lem: exact lumping cond indep} hints at a close connection between strong and exact lumping. Indeed, if the Markov chain $X$ is irreducible and is started in a stationary distribution $\uppi$ of $P$, then it has a time reversal $X^\rev=\MC(\uppi,P^\rev)$, and $P$ lumps strongly under $f$ if and only if $X^\rev$ lumps exactly under $f$. Marin and Rossi \cite{marin2014} observe the `if' direction of this statement. The `only if' part does not hold in their context as they use Schweitzer's \cite{schweitzer1984} original definition of exact lumping, where the stationary measure must be uniform over each fibre. It becomes true using our more general definition (see \cite[Cor. 10.2]{crane2024weak}).

\begin{remark}\label{rem:stationaryexactlumping}
    In the stationary setting of Theorem~\ref{thm:stationary}, if $f$ is an exact lumping of $\MC(\uppi_X,P_X)$  then $f$ is a strong lumping of $P_X^\rev$, so by Proposition~\ref{lem: g strong proj1 strong} we have $\varphi^\rev \equiv 1$. Hence the coupling provided by the proof of Theorem~\ref{thm:stationary} coincides with the one given in the proof of Theorem~\ref{thm:DTHWMC}. 
\end{remark}

\begin{dfn}[Exact Markovian coupling]
\label{def:EMC}
    Suppose $\Xoriginal$ and $\Yoriginal$ are homogeneous Markov chains on state spaces $A$ and $B$. An \textbf{exact Markovian coupling} (EMC) of $\Xoriginal$ and $\Yoriginal$ is a homogeneous Markov chain  whose state space is a subset of $A \times B$ and which lumps exactly under $\proj^1$ and $\proj^2$ to processes equal in law to $\Xoriginal$ and $\Yoriginal$ respectively. 
\end{dfn}
\begin{remark}
EMCs are WMCs, since exact lumpings are weak lumpings.
\end{remark}

Proposition~\ref{cor: g exact proj1 exact} implies that if we start in Theorem~\ref{thm:DTHWMC} with exact lumpings $f$ and $g$, then the coupling $\MC(\upalpha,P)$ that it provides is in fact an EMC. 

\begin{proof}[Proof of Proposition~\ref{cor: g exact proj1 exact}]
We take $f$ to be an exact lumping of $\MC(\upalpha_X,P_X)$ to $\MC(\upalpha_Z,P_Z)$ with respect to the family of probability vectors $(\nu_z)_{z \in C}$, where for each $z \in C$, $\nu_z$ is a probability vector on $A$ supported on $f^{-1}(z)$. These are assumed to satisfy the exact lumping equation: for every $z,z' \in C$ and every $x' \in f^{-1}(z')$,
\[\sum_{x \in f^{-1}(z)} \nu_z(x)P_X(x,x') = P_Z(z,z')\nu_{z'}(x').\]
Moreover, $\upalpha_X$ is assumed to be a convex combination of this family, and $f_*(\upalpha_X) = \upalpha_Z$ and the supports of the $\nu_c$ are disjoint, so this convex combination must be
\[ \upalpha_X = \sum_{c \in C} \upalpha_Z(c) \nu_c.\]
We construct a family $(\mu_y)_{y \in B}$ of measures on $\Delta'$, where $\mu_y$ is supported on the fibre $\left(\proj^2\right)^{-1}(y)$, by
\[\mu_y(a,b)  = \begin{cases} 0 & \text{if $b \neq y$,}\\ \nu_{g(b)}(a) \varphi(a,b) & \text{if $b = y$.}\end{cases}\]
Note that $\upalpha$ is a convex combination of the measures $\mu_y$ over $y \in B$, since for $(a,b) \in \Delta'$ with $f(a) = g(b) = c$ we have
\[\upalpha(a,b)  = \frac{\upalpha_X(a)\upalpha_Y(b)}{\upalpha_Z(c)}\varphi(a,b) = \nu_c(a)\upalpha_Y(b)\varphi(a,b) \]
so
\[\upalpha = \sum_{y \in B} \upalpha_Y(y) \mu_y.\]
 It remains to show that this family satisfies the exact lumping equation given in Definition~\ref{def:EL} for the exact lumping of $\MC(\upalpha,P)$ to $\MC(\upalpha_Y,P_Y)$ under $\proj^2: \Delta' \to B$. Let $b,b'\in B$ and $a'\in f^{-1}(g(b'))$. Let $c=g(b)$ and $c'=g(b')$. We must show that
    \begin{equation}
    \label{eqn: EL for lemma}
        \sum_{a\in f^{-1}(g(b))} 
        \nu_{g(b)}(a)\varphi(a,b) 
        P((a,b),(a',b'))  
        =
        P_Y(b,b')
        \nu_{g(b')}(a')\varphi(a',b') 
        \,.
    \end{equation}
    There are two cases. If $P_Z(c,c') = 0$ then $P_Y(b,b') = 0$, as in the proof of Proposition~\ref{lem: g strong proj1 strong}, so both sides of~\ref{eqn: EL for lemma} are $0$.  If $P_Z(c,c') > 0$ then using the definition of $P$ we see that \eqref{eqn: EL for lemma} is implied by
    \[
        \sum_{a\in f^{-1}(g(b))} 
        \nu_{g(b)}(a)
        P_X(a,a')
        =
        P_Z(c,c')
        \nu_{g(b')}(a')
        \,.
    \]
    This holds, by our assumption that $P_X$ lumps exactly under $f$ to $P_Z$ with respect to $(\nu_c)_{c \in C}$.
\end{proof}

\section{Intertwining of Markov chains}\label{ss:intertwining}
In this section we discuss another known sufficient condition for the existence of a WMC of two given DTHMCs, given by Diaconis and Fill~\cite[Thm. 2.17]{diaconis1990duality}.  Let $A$ and $B$ be countable sets\footnote{In \cite{diaconis1990duality} $B$ was taken to be a subset of the power set of $A$, because the authors were studying duality, but this is not necessary for what follows; a version of this construction for finite sets $A$ and $B$ may be found in lecture notes of Jan Swart~\cite[Prop.~3.3]{swart2013duality}.}, let $P_X \in \SM(A)$ and $P_Y \in \SM(B)$ and let $\Lambda$ be a stochastic kernel from $B$ to $A$, meaning that $\Lambda$ has non-negative entries, rows indexed by $B$ and columns indexed by $A$, and its row sums are all equal to $1$.
For $\upalpha_X \in \PM(A)$ and $\upalpha_Y \in \PM(B)$, Diaconis and Fill say that $(\upalpha_X,P_X)$ is \emph{algebraically dual} to $(\upalpha_Y,P_Y)$ with respect to the link matrix $\Lambda$ when
\[ \Lambda P_X = P_Y \Lambda \;\text{ and }\; \upalpha_X = \upalpha_Y \Lambda.\]
Under this condition, let
\[ E = \{(a,b) \in A \times B\,:\,\Lambda(b,a) > 0  \}.\]
For $(a,b) \in E$, let
\begin{equation}
\label{eqn:DiaconisFillDist}
\upalpha(a,b) = \upalpha_Y(b)\Lambda(b,a) ,\end{equation}
and for $(a,b), (a',b') \in E$, let
\begin{equation}
\label{eqn:DiaconisFillMatrix}
P((a,b),(a',b')) = \begin{cases}\displaystyle{\frac{P_Y(b,b')\Lambda(b',a')P_X(a,a')}{(P_Y \Lambda)(b,a')}}, & \text{if $(P_Y\Lambda)(b,a') > 0$},\\
0, & \text{if $(P_Y\Lambda)(b,a') = 0 $}.
\end{cases}
\end{equation}
Then $\upalpha \in \PM(E)$, $P \in \SM(E)$, and the DTHMC $(X,Y) = \MC(\upalpha,P)$ is an \emph{intertwining of the Markov chains} $\MC(\upalpha_X,P_X)$ and $\MC(\upalpha_Y,P_Y)$. This means that it is a WMC of $\MC(\upalpha_X,P_X)$ and $\MC(\upalpha_Y,P_Y)$, for which $\proj^1$ is a strong lumping and $\proj^2$ is an exact lumping with respect to the family $(\nu_b)_{b \in B}$ where $\nu_b((a,b)) = \Lambda(b,a)$ for all $a \in A$, $b \in B$.  That is,
\[ 
    \mathbb{P}(X_k = x \mid Y_0, \dots, Y_k) = \Lambda(Y_k,x)\,.
\]
Let us now consider the setting of our Theorem~\ref{thm:DTHWMC}, in the special case where $g:B \to C$ is a strong lumping and $f:A \to C$ is an exact lumping with respect to the family $(\nu_c)_{c \in C}$, then we may define a link matrix $\Lambda$ from $B$ to $A$ by  
\[
    \Lambda(b,a) = \begin{cases} \nu_{g(b)}(a) & \text{if $f(a) = g(b)$,}\\
    0, &\text{otherwise.}\end{cases}
\]
The reader may check that $(\upalpha_Y,P_Y)$ is algebraically dual to $(\upalpha_X,P_X)$ in the above sense of Diaconis and Fill. Moreover, $E \subseteq \Delta(f,g)$ and the initial distribution $\upalpha$ and transition matrix $P$ on $\Delta(f,g)$ defined by equations~\eqref{eqn:DiaconisFillDist} and~\eqref{eqn:DiaconisFillMatrix} coincide on $E$ with those constructed in our proof of Theorem~\ref{thm:DTHWMC}, since $\varphi \equiv 1$ in this case, by Proposition~\ref{lem: g strong proj1 strong}.

%
%

\section{Proof of main result}
\label{sec:2}

\subsection{Outline of the proof of Theorem~\ref{thm:DTHWMC}}
\label{ss:outline}

In Sections~\ref{ss:assumptions}, \ref{ss:jointdistofW}, and \ref{subsec:conditioning}, we show that $\varphi$ exists. Then $\upalpha$ and $P$ are well-defined. In Section~\ref{ss: pi is pm}, we show that $\upalpha$ is a probability measure, and in Section~\ref{ss: P is stochastic}, we show that $P$ is a stochastic matrix. This makes $W=\MC(\upalpha,P)$ a well-defined Markov chain. Once all objects within the statement of Theorem~\ref{thm:DTHWMC} have been constructed rigorously, the remaining proof in Section~\ref{ss: completing the proof} is relatively straightforward.

The approach of the proof is to construct a sequence of Markov chains $W^m=(X^m,Y^m)$ for $m \in \mathbb{N}$. Each $W^m$ is a coupling of $\Xoriginal$ and $\Yoriginal$ such that $W^m_t \in \Delta$ for all $t \le m$, and after that its marginals evolve independently. We compute an explicit formula for $\mathbb{P}(W^m_0 = (a,b))$, involving
\begin{equation}
\label{def: phi_m}
    \varphi_m = R^m \mathbf{1}_\Delta
    \,.
\end{equation}
Using the fact that $\mathbb{P}(W^m_0 = (a,b))$ is a bounded martingale with respect to $m$, we deduce that $\varphi_m$ converges co-ordinatewise; its limit is defined to be $\varphi$. We then define the DTHMC $W = \MC(\upalpha,P)$ using $\varphi$, and show that for any finite sequence $(a_0,b_0), \dots, (a_k,b_k)$, as $m \to \infty$,
\[
    \prob{W^m_0 = \!(a_0,b_0), \dots,\! W^m_k = \!(a_k,b_k)} \to \prob{W_0 = \!(a_0,b_0), \dots,\! W_k = \!(a_k,b_k)}. 
\]
Since we are working in discrete time on a countable state space, this enables us to show that $W^m\overset{d}{\to}W$ as $m\to\infty$. The projections $\proj^1: A \times B \to A$ and $\proj^2: A \times B \to B$ are continuous in the discrete topology, and therefore they induce continuous maps from $(A\times B)^{\mathbb{N}}$ to $A^{\mathbb{N}}$ and $B^{\mathbb{N}}$ with respect to their product topologies. Since $\proj^1(W^m) \eqdist \Xoriginal$ for each $m$ and $W^m \overset{d}{\to} W$, we deduce that $\proj^1(W) \eqdist \Xoriginal$, and similarly $\proj^2(W) \eqdist \Yoriginal$. That is to say, $W$ is a coupling of $\Xoriginal$ and $\Yoriginal$.

In the next two subsections, we define $(W^m)_{m\ge1}$ and calculate the joint distribution of $W^m_0,\dots,W^m_k$ when $m \ge k$. Then in section~\ref{subsec:conditioning} we  look at the particular case of $k=0$ and use it to show that $\varphi=\lim_{m\to\infty}{R^m \mathbf{1}_\Delta}$ is well-defined. We then define $\Delta'$ to be the support of $\varphi$ and show that $\upalpha\in \PM(\Delta')$ and $P\in\SM(\Delta')$.

\subsection{\texorpdfstring{Definition of $W^m$ and $\varphi_m$}{Definition of Wm and phi}}
\label{ss:defW_m}

For each $m \ge 0$, we define a WMC $W^m = (X^m_t, Y^m_t)_{t \ge 0}$, where $X^m \eqdist \Xoriginal$ and $Y^m \eqdist \Yoriginal$. We will define them all on a single probability space. First, let $Z$ be a Markov chain distributed like $\Zoriginal$. Let $X^m_t$ have conditional law equal to that of $\Xoriginal$ given $f(\Xoriginal_0) = Z_0, \dots, f(\Xoriginal_m) = Z_m$. For each $m \ge 0$, let $Y^m$ be conditionally independent of $X^m$ given $Z$, with conditional law equal to that of $\Yoriginal$ given $g(\Yoriginal_0) = Z_0, \dots, g(\Yoriginal_m) = Z_m$.  Observe that by construction $W^m$ takes its values up to time $m$ in $\Delta(f,g)$, but for $t > m$ we may have $f(X^m_t) \neq g(Y^m_t)$.

\begin{remark} 
    Unlike the chain $W^m$, the coupling $W = (X,Y)$ in Theorem~\ref{thm:DTHWMC} will not be defined \emph{a priori} by conditionally independent sampling of $X\eqdist\Xoriginal$ and $Y\eqdist\Yoriginal$ given $f(X)=Z=g(Y)$, but rather it will be defined as $W = (X,Y)=\MC(\upalpha,P)$, where $\upalpha$ and $P$ are given in the statement of Theorem~\ref{thm:DTHWMC}. This definition will make sense once we have proven that $\upalpha \in \PM(\Delta')$ and $P \in \SM(\Delta')$. 
\end{remark}
\begin{remark}
    Although we will not need it as part of the proof of Theorem~\ref{thm:DTHWMC}, it can be shown that $W^m$ is a Markov chain, inhomogeneous in general.  
\end{remark}

For each $m\ge0$, define a column vector $\varphi_m$, indexed by $\Delta$, by
\[
    \varphi_m = R^m \mathbf{1}_\Delta\,.
\]
This is well-defined, because multiplication of infinite matrices with entries in $[0,\infty) \cup \{+\infty\}$ is well-defined and associative\footnote{Multiplication of infinite matrices over $\mathbb{R}$ can fail to be associative, even when every series involved in the computation of the matrix products converges; see \cite{Bossaller2019assoc}.}. Later, in Section~\ref{ss:altdist} we will see that $\varphi_m$ is pointwise bounded on $\Delta$, so all entries of $R^m$ are finite.

We will assume, without loss of generality, that $f$ and $g$ are surjective functions. This is possible because any states in $C$ which are almost surely never visited by $\Zoriginal$ may be discarded from $C$; the hypotheses of Theorem~\ref{thm:DTHWMC} will still be satisfied, and the conclusions are not affected.

\subsection{\texorpdfstring{Temporary assumptions on the support of \ensuremath{\upalpha_X} and \ensuremath{\upalpha_Y}}{Temporary assumptions}}
\label{ss:assumptions}

In Sections~\ref{ss:jointdistofW}
and~\ref{subsec:conditioning}, 
we show that $\varphi$ exists under the following additional assumption before showing that the assumption may be dropped:
\begin{assump} 
\label{assump:fullsupport} 
    $\supp(\upalpha_X) = A$ and  
    $\supp(\upalpha_Y) = B$.
\end{assump} 
Assuming $f$ and $g$ are surjective, it follows that $\supp(\upalpha_Z)=C$.

Having done this, we explain how to modify the argument to deal with the general case, by constructing alternative initial distributions $\upalpha'_X$, $\upalpha'_Y$, and $\upalpha'_Z$ with the following properties:
\begin{itemize}
    \item $\supp(\upalpha'_X) = A$, $\supp(\upalpha'_Y) = B$, and $\supp(\upalpha'_Z) = C$,
    \item $f_*(\upalpha'_X) = g_*(\upalpha'_Y) = \upalpha'_Z$,
    \item $f(\MC(\upalpha'_X,P_X)) \eqdist \MC(\upalpha'_Z,P_Z) \eqdist g(\MC(\upalpha'_Y,P_Y))$.
\end{itemize}

\subsection{\texorpdfstring{The joint distribution of \ensuremath{W^m_0, \dots, W^m_k} when \ensuremath{m \ge k}}{Joint dist of Wm0 to Wmk}}{\,}
\label{ss:jointdistofW}

\begin{lem}
\label{lem: joint dist of Wm}
    Let $k \ge 0$ and $(a_0,b_0), \dots, (a_k,b_k) \in \Delta$ with $c_i = f(a_i) = g(b_i)$ for $0 \le i \le k$. Suppose $\upalpha_Z(c_0) > 0$. Then for each $m \ge k$, 
    \begin{align}
    \begin{split}
    \label{eqn: Wm joint}
    \mathbb{P}&(W^m_0 = (a_0,b_0), \dots, W^m_k = (a_k,b_k)) \\  
        &   
         = \mathbb{E}\bigl[
        \mathbb{P}(X^m_{\le k} = (a_0, \dots, a_k) \mid Z)\, \mathbb{P}(Y^m_{\le k} = (b_0, \dots, b_k) \mid Z)
        \bigr] 
        \\
        & = 
        \mathbb{E}\bigl[
        \mathbb{P}(X^m_{\le k} = (a_0, \dots, a_k) \mid Z_{\le m})\, \mathbb{P}(Y^m_{\le k} = (b_0, \dots, b_k) \mid Z_{\le m})
        \bigr]
        \\
        & = 
        \frac{\upalpha_X(a_0)\upalpha_Y(b_0)}{\upalpha_Z(c_0)}\biggl(\prod_{i= 1}^{k}R((a_{i-1},b_{i-1}),(a_i,b_i))\bigg)\,\varphi_{m-k}(a_k,b_k)\,.
    \end{split}
    \end{align}
\end{lem}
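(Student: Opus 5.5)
The plan is to verify the three equalities in turn, using the definition of $W^m$ from Section~\ref{ss:defW_m} and the Markov property of $\Xoriginal$, $\Yoriginal$, $\Zoriginal$.

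\textbf{First equality.} This is immediate from conditional independence of $X^m$ and $Y^m$ given $Z$. Since the event is contained in $\{Z_{\le k}=c_{\le k}\}$, we have $\mathbb{P}(W^m_{\le k}=\cdot\mid Z)=\mathbb{P}(X^m_{\le k}=\cdot\mid Z)\,\mathbb{P}(Y^m_{\le k}=\cdot\mid Z)$, and taking expectations gives the claim.

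\textbf{Second equality.} By construction, the conditional law of $X^m$ given $Z$ is the law of $\Xoriginal$ given $f(\Xoriginal_{\le m})=Z_{\le m}$. This depends only on $Z_{\le m}$, so the conditional probabilities given $Z$ and given $Z_{\le m}$ agree almost surely. The same holds for $Y^m$.

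\textbf{Third equality.} Here I compute explicitly. On the event $Z_{\le m}=(z_0,\dots,z_m)$ with $z_i=c_i$ for $i\le k$, and assuming this event has positive $\Zoriginal$-probability, we have
\[
\mathbb{P}(X^m_{\le k}=a_{\le k}\mid Z_{\le m}=z_{\le m})=\frac{\upalpha_X(a_0)\prod_{i=1}^k P_X(a_{i-1},a_i)\,h^X(a_k;z_k,\dots,z_m)}{\upalpha_Z(z_0)\prod_{i=1}^m P_Z(z_{i-1},z_i)}.
\]
Here $h^X(a;z_k,\dots,z_m)=\mathbb{P}_a(f(\Xoriginal_j)=z_{k+j},\ 0<j\le m-k)$. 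This formula uses the Markov property of $\Xoriginal$ at time $k$ together with $\mathbb{P}(f(\Xoriginal_{\le m})=z_{\le m})=\mathbb{P}(\Zoriginal_{\le m}=z_{\le m})$, which is the Markov product. On the event $z_i\neq c_i$ for some $i\le k$, the conditional probability is zero.

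Next, multiply by the analogous expression for $Y$ and by $\mathbb{P}(Z_{\le m}=z_{\le m})=\upalpha_Z(z_0)\prod P_Z$, and sum over $z_{k+1},\dots,z_m$. After cancelling one copy of the $Z$-product, the first $k$ steps give
\[
\frac{\upalpha_X(a_0)\upalpha_Y(b_0)}{\upalpha_Z(c_0)}\prod_{i=1}^k \frac{P_X(a_{i-1},a_i)P_Y(b_{i-1},b_i)}{P_Z(c_{i-1},c_i)}.
\]
This equals the stated product of $R$-entries whenever every $P_Z(c_{i-1},c_i)>0$. If some $P_Z(c_{i-1},c_i)=0$, then both sides vanish: the left because $Z$ makes no such transition, the right by the definition of $R$. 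The remaining sum is
\[
\sum_{z_{k+1},\dots,z_m:\ \prod P_Z>0}\ \frac{h_{\text{path}}^X\,h_{\text{path}}^Y}{\prod_{i=k+1}^m P_Z(z_{i-1},z_i)}.
\]
Expanding $h^X$ and $h^Y$ as sums over paths $a_{k+1..m}$ and $b_{k+1..m}$ with the prescribed images, this becomes $\sum \prod_{i=k+1}^m R((a_{i-1},b_{i-1}),(a_i,b_i))$ over paths in $\Delta$, which is $(R^{m-k}\mathbf{1}_\Delta)(a_k,b_k)=\varphi_{m-k}(a_k,b_k)$.

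\textbf{Main obstacle.} The delicate point is the bookkeeping of zero-probability $Z$-paths. The conditional probabilities are only defined on $Z$-paths of positive probability, and I must match this with the convention $R=0$ when $P_Z=0$. Since every term is nonnegative, Tonelli justifies interchanging the sums over $z$ and over paths, even for infinite state spaces. One also needs $\upalpha_Z(c_0)>0$, which is assumed, and must note that if $\upalpha_X(a_0)=0$ both sides are zero.
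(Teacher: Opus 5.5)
Your proof is correct and takes essentially the same route as the paper. Both expand the probability as a sum over $Z$-trajectories of positive probability, write the conditional path probabilities as Markov products, cancel one copy of the $Z$-product, and recognise the remaining sum over $\Delta$-paths as $(R^{m-k}\mathbf{1}_\Delta)(a_k,b_k)$. The paper does this in one step by summing over full trajectories in $\mathcal{T}$. Your intermediate functions $h^X,h^Y$, your justification of the first two equalities, and your handling of zero-probability transitions make explicit details the paper leaves implicit.
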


\begin{remark}
    Before proving this lemma, let us explain how it will be used. Once we have proven that $P$ is a stochastic matrix and that $\upalpha$ is a probability distribution, we will know that $\MC(\upalpha,P)$ is a well-defined DTHMC, and
    then with the same assumptions as in Lemma~\ref{lem: joint dist of Wm}, we will have
    \begin{multline*}
    \prob{W_0 = (a_0,b_0), \dots, W_k = (a_k,b_k)} 
     = 
    \upalpha(a_0,b_0) \prod_{i=1}^{k} P((a_{i-1},b_{i-1}), (a_{i},b_{i}))\\
     =  \frac{\upalpha_X(a_0)\upalpha_Y(b_0)}{\upalpha_Z(c_0)}\varphi(a_0,b_0) \prod_{i=1}^k \frac{1}{\varphi(a_{i-1},b_{i-1})}R((a_{i-1},b_{i-1}),(a_i,b_i)) \,\varphi(a_i,b_i)\\
     =  \frac{\upalpha_X(a_0)\upalpha_Y(b_0)}{\upalpha_Z(c_0)}\biggl(\prod_{i=1}^k R((a_{i-1},b_{i-1}),(a_i,b_i))\biggr) \,\varphi(a_k,b_k).
    \end{multline*}
    Once we have proven that $\varphi_m\to\varphi$, we will be able to  take the limit as $m\to\infty$ in~\eqref{eqn: Wm joint} for any $(a_0,b_0), \dots, (a_k,b_k) \in \Delta'$, to obtain
    \begin{multline}
    \label{eqn: convergence of fdds}
        \lim_{m \to \infty} \mathbb{P}(W^m_0 = (a_0,b_0), \dots, W^m_k = (a_k,b_k)) \\
        = \mathbb{P}(W_0 = (a_0,b_0), \dots, W_k = (a_k,b_k))\,.
    \end{multline}
    This also holds in the case where $\upalpha_Z(c_0) = 0$, since then all the probabilities in~\eqref{eqn: convergence of fdds} are $0$.
\end{remark}

\begin{proof}[Proof of Lemma~\ref{lem: joint dist of Wm}]
    For notational convenience, define 
    \begin{align*}
      \mathcal{T} =   \{& ((a_{k+1},b_{k+1},c_{k+1}), \dots, (a_m,b_m,c_m)) \,:\,\prob{Z_{\le m} = (c_0, \dots, c_{m})} > 0,\\  
        &c_{k+1} = f(a_{k+1}) = g(b_{k+1}), \dots, c_m = f(a_m)=g(b_m) \}.
    \end{align*}
    By definition, $\mathbb{P}(W^m_0 = (a_0,b_0), \dots, W^m_k = (a_k,b_k))$ is
    \begin{align*} 
        \sum_\mathcal{T}\mathbb{P}&(Z_{\le m} = (c_0, \dots, c_{m})) \;\times
        \mathbb{P}(X^m_{\le m} = (a_0, \dots, a_{m}) \mid Z_{\le m} = (c_0, \dots, c_{m}))\; 
        \\ &
        \times \mathbb{P}(Y^m_{\le m} = (b_0, \dots, b_{m}) \mid Z_{\le m} = (c_0, \dots, c_{m}))\,.
    \end{align*}
    We rewrite this as a sum over trajectories $(a_{k+1},b_{k+1}), \dots (a_m,b_m)$ in $\Delta$, and express it in terms of the initial distributions and transition matrices of $\Xoriginal, \Yoriginal$, and $\Zoriginal$.
    \begin{equation}
        \sum_\mathcal{T}  \upalpha_Z(c_0)\prod_{i=1}^{m}P_Z(c_{i-1},c_i) \times 
        \frac{\upalpha_X(a_0)}{\upalpha_Z(c_0)} \prod_{i=1}^{m} \frac{P_X(a_{i-1},a_i)}{P_Z(c_{i-1},c_i)} \times 
        \frac{\upalpha_Y(b_0)}{\upalpha_Z(c_0)} \prod_{i=1}^{m} \frac{P_Y(b_{i-1},b_i)}{P_Z(c_{i-1},c_i)}
    \end{equation}
    For the $k\ge1$ case, we rearrange this by breaking each product of the form $\prod_{i=1}^m$ into two products, of the form $\prod_{i=1}^k$ and $\prod_{i={k+1}}^m$. We can collect factors into terms involving $R$ and we recognize that the summation over $(a_{k+1}, b_{k+1}), \dots, (a_{m-1},b_{m-1})$ performs a matrix product, so we end up with the final expression in~\eqref{eqn: Wm joint}. In particular, when $k=0$, we get
    \begin{equation}\label{eqn: Wm init}\begin{aligned}
         \mathbb{P}(W^m_0 = (a_0,b_0)) 
        & = 
        \mathbb{E}[\mathbb{P}(X^m_0 = a_0 \mid Z)\, \mathbb{P}(Y^m_0 = b_0 \mid Z)]\\
        & = 
        \frac{\upalpha_X(a_0)\upalpha_Y(b_0)}{\upalpha_Z(c_0)}\varphi_m(a_0,b_0)
        .
    \end{aligned}\qedhere\end{equation}
\end{proof}

\subsection{\texorpdfstring{Conditioning on \ensuremath{(Z_0, \dots, Z_m)} to prove convergence of \ensuremath{R^m \mathbf{1}_\Delta}}{Proving convergence of phim}}
\label{subsec:conditioning}

For any $m,k \ge 0$ and $a_0, \dots, a_k \in A$, define the random variable
\begin{align*}
    L_m(a_0, \dots, &a_k)   =  \mathbb{P}(X^m_0 = a_0, \dots, X^m_k = a_k \mid Z_0, \dots Z_m) \\ & =  \mathbb{P}(\Xoriginal_0 = a_0, \dots, \Xoriginal_k = a_k \mid f(\Xoriginal_0)=Z_0, \dots f(\Xoriginal_m)=Z_m)\\
    & =  \frac{\mathbb{P}(\Xoriginal_0 = a_0, \dots, \Xoriginal_k = a_k,\; f(\Xoriginal_0)=Z_0, \dots f(\Xoriginal_m)=Z_m)}{\mathbb{P}( f(\Xoriginal_0)=Z_0, \dots f(\Xoriginal_m)=Z_m)}\,.
\end{align*}
Similarly for $b_0, \dots, b_k \in B$ define 
\[
    M_m(b_0, \dots, b_k) = \mathbb{P}(Y^m_0 = b_0, \dots, Y^m_k = b_k \mid Z_0, \dots, Z_m).
\]
Let $\mu_Z$ denote the law of $Z$. The random variables $L_m(a_0, \dots, a_k)$ are a countable family of functions of the random variable $Z$, each of which is defined $\mu_Z$-a.e.~by the formula above, because $C^m$ is countable. The same applies to the random variables $M_m(b_0, \dots, b_k)$. Hence they are \emph{simultaneously} defined $\mu_Z$-a.e., for all finite strings $(a_0, \dots, a_k)$ and $(b_0, \dots, b_k)$, and all $m \in \mathbb{N}$.
    
The sequences $(L_m(a_0, \dots, a_k))_{m \ge 0}$ and $(M_m(b_0, \dots, b_k))_{m \ge 0}$ are (countably many) bounded martingales with respect to the filtration generated by $Z$, so $\mu_Z$-a.e.~they all converge $\mu_Z$ to (random) limits, which we call $L_\infty(a_0, \dots, a_k)$ and $M_\infty(b_0, \dots, b_k)$.  Note that 
\[ 
\sum_{a_0, \dots, a_k \in A} L_m(a_0, \dots, a_k) = 1 \;\text{ a.s.,}
\]
and for each $j \ge 1$ and $(a_0, \dots, a_{j-1}) \in A^j$ we have
\[
\sum_{a_j, \dots, a_k \in A} L_m(a_0, \dots, a_k) = L_m(a_0, \dots, a_{j-1}) \;\text{ a.s.}\,. 
\]
Analogous equations hold for $M_m$. The next lemma will be used to show that the same holds with $L_m$ replaced by $L_\infty$ and likewise for $M_m$ replaced by $M_\infty$.
\begin{lem}[The limit of a martingale partition]\label{lem:martingalepartitionofunity}
Let $I$ be a countable set and for each $i \in I$ let $(U_t^{(i)})_{t \ge 0}$ be a non-negative martingale, all with respect to the same filtration $(\mathcal{F}_t)_{t \ge 0}$ on a common probability space. Suppose that $ \sum_{i \in I} U^{(i)}_t   \le 1$ a.s. for every $t \ge 0$. Then almost surely \[\sum_{i \in I} \lim_{t \to \infty} U^{(i)}_t = \lim_{t \to \infty} \sum_{i \in I} U_t^{(i)}.\]
\end{lem}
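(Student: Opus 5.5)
The plan is to prove the two inequalities separately. Write $U^{(i)}_\infty = \lim_{t\to\infty} U^{(i)}_t$, which exists almost surely for each $i$ by the martingale convergence theorem, since each $U^{(i)}$ is a non-negative martingale. Because $I$ is countable, we may fix a single almost sure event on which all of these limits exist. Also write $S_t = \sum_{i\in I} U^{(i)}_t$. We first observe that $S_t$ is itself a martingale bounded by $1$. Indeed, it is integrable since $0 \le S_t \le 1$. For the martingale property, monotone convergence for conditional expectations of non-negative series gives $\mathbb{E}[S_{t+1}\mid\mathcal{F}_t] = \sum_i \mathbb{E}[U^{(i)}_{t+1}\mid \mathcal{F}_t] = S_t$. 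Hence $S_\infty = \lim_t S_t$ exists almost surely and in $L^1$, and $\mathbb{E}[S_\infty] = \mathbb{E}[S_0]$.

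The inequality $\sum_i U^{(i)}_\infty \le S_\infty$ follows from Fatou's lemma applied to counting measure on $I$. Pathwise, $\sum_i \lim_t U^{(i)}_t \le \liminf_t \sum_i U^{(i)}_t = S_\infty$ almost surely.

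For the reverse inequality, the idea is to compare expectations. Each $U^{(i)}$ is dominated by $1$, since $U^{(i)}_t \le S_t \le 1$, so it is a bounded martingale and therefore uniformly integrable. Thus $\mathbb{E}[U^{(i)}_\infty] = \mathbb{E}[U^{(i)}_0]$. By Tonelli's theorem,
\[
\mathbb{E}\Bigl[\sum_i U^{(i)}_\infty\Bigr] = \sum_i \mathbb{E}[U^{(i)}_0] = \mathbb{E}[S_0] = \mathbb{E}[S_\infty].
\]
So $S_\infty - \sum_i U^{(i)}_\infty$ is a non-negative random variable, by the first inequality, and it has expectation zero. It therefore vanishes almost surely, which proves the lemma.

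The only delicate point is justifying the interchanges of sums with conditional expectations and limits for a countably infinite $I$. Non-negativity makes every such interchange legitimate via monotone convergence or Tonelli. The bound $\sum_i U^{(i)}_t \le 1$ is what gives uniform integrability, both of each $U^{(i)}$ and of $S_t$. Without it, mass could escape to infinity in $t$, or be spread across infinitely many indices, and the equality could fail.
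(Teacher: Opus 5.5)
Your proposal is correct and follows essentially the same route as the paper: Fatou's lemma (for counting measure on $I$) gives the pathwise inequality $\sum_i U^{(i)}_\infty \le S_\infty$, and then Tonelli together with bounded convergence of each $U^{(i)}$ and of the martingale $S_t$ shows both sides have the same finite expectation $\sum_i \mathbb{E}U^{(i)}_0$, forcing equality almost surely. The only cosmetic difference is that you justify the limits of expectations via uniform integrability and $L^1$ convergence of bounded martingales, where the paper invokes dominated convergence.
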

\begin{proof} 
    By the martingale convergence theorem, $\lim_{t \to \infty} U_t^{(i)}$ exists a.s. for each $i$ and hence for all $i$ almost surely, since $I$ is countable.  We have
    \[ 
        \mathbb{E}\bigg(\sum_{i \in I} U_{t+1}^{(i)} \mid \mathcal{F}_t\bigg) = \sum_{i \in I}\mathbb{E}\big(U_{t+1}^{(i)} \mid \mathcal{F}_t\big) = \sum_{i \in I} U_{t}^{(i)},
    \]
    so $\sum_{i \in I} U_{t}^{(i)}$ is a non-negative martingale and hence it converges a.s. to a finite limit.  
    Since all $U^{(i)}_t \ge 0$, by Fatou's lemma we have 
    \begin{equation}
    \label{eq:Fatouforpartition}
        \sum_{i \in I} \lim_{t \to \infty} U^{(i)}_t \le \lim_{t \to \infty} \sum_{i \in I} U^{(i)}_t \; \text{ a.s.}
    \end{equation}
    Since $0 \le U_t^{(i)} \le 1$ a.s.  by dominated convergence and the martingale property we have
    \begin{equation}
    \label{eq:DC}
        \mathbb{E}\lim_{t \to \infty} U^{(i)}_t  = \lim_{t \to \infty} \mathbb{E}U_t^{(i)} = \lim_{t \to \infty} \mathbb{E}U_0^{(i)} = \mathbb{E}U_0^{(i)}.
    \end{equation}
    Now we find the expectation of the left-hand side of~\eqref{eq:Fatouforpartition}. By Tonelli's theorem and~\eqref{eq:DC}, 
    \[
        \mathbb{E}\sum_{i \in I} \lim_{t \to \infty} U^{(i)}_t  = \sum_{i \in I} \mathbb{E}\lim_{t \to \infty} U^{(i)}_t = \sum_{i \in I} \mathbb{E} U_0^{(i)}.
    \]
    On the other hand, taking the expectation of the right-hand side of~\eqref{eq:Fatouforpartition}, we may apply the dominated convergence theorem, since $\sum_{i \in I} U_t^{(i)} \le 1$ a.s., obtaining
    \[
        \mathbb{E}\lim_{t \to \infty} \sum_{i \in I} U_t^{(i)} = \lim_{t \to \infty} \mathbb{E}\sum_{i \in I} U_t^{(i)} = \lim_{t \to \infty} \sum_{i \in I}\mathbb{E} U_t^{(i)} = \lim_{t \to \infty} \sum_{i \in I}\mathbb{E} U_0^{(i)} = \sum_{i \in I}\mathbb{E} U_0^{(i)}.
    \]
    Thus, the two non-negative sides of~\eqref{eq:Fatouforpartition} have equal expectation; moreover this expectation is finite: 
     \[ 
        \sum_{i \in I}\mathbb{E} U_0^{(i)} = \mathbb{E}\sum_{i \in I} U_0^{(i)} \le 1.
    \]
    Hence we must have equality almost surely in~\eqref{eq:Fatouforpartition}, as required.
\end{proof}
\begin{cor}
\label{cor:Linftyisaprob}
For $0 \le j \le k$, $(a_0, \dots, a_{j-1}) \in A^{j}$ and $(b_0, \dots, b_{j-1}) \in B^{j}$, we have almost surely
\[ \sum_{a_{j}, \dots, a_k \in A} L_\infty(a_0, \dots, a_k) = \begin{cases} L_\infty(a_0, \dots, a_{j-1}) & \text{ if $j \ge 1$,}\\
1 & \text{ if $j=0$,}\end{cases}\]
and
\[ 
    \sum_{b_{j}, \dots, b_k \in B} M_\infty(b_0, \dots, b_k) = \begin{cases} M_\infty(b_0, \dots, b_{j-1}) & \text{ if $j \ge 1$,}\\  1 & \text{ if $j=0$.}\end{cases}
\]
\end{cor}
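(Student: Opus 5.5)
The plan is to apply Lemma~\ref{lem:martingalepartitionofunity} directly, once for each fixed prefix, with $I$ a suitable countable index set of strings. The argument for $M_\infty$ is identical to the one for $L_\infty$, so I only describe the $L$ case.

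\emph{Case $j=0$.} Take $I = A^{k+1}$ and, for $(a_0,\dots,a_k)\in I$, set $U^{(a_0,\dots,a_k)}_m = L_m(a_0,\dots,a_k)$, with filtration $\mathcal{F}_m=\sigma(Z_0,\dots,Z_m)$. These processes satisfy the hypotheses of the lemma:
\begin{itemize}
\item each one is non-negative;
\item each one is a martingale in $m$, since $L_m(a_0,\dots,a_k)=\mathbb{P}(\Xoriginal_{\le k}=(a_0,\dots,a_k)\mid f(\Xoriginal_{\le m}))$ evaluated at $Z_{\le m}$, and $Z\eqdist f(\Xoriginal)$;
\item almost surely $\sum_{I} U^{(i)}_m = 1 \le 1$ for every $m$.
\end{itemize}
The lemma then gives
\[
\sum_{I} L_\infty(a_0,\dots,a_k) = \lim_{m\to\infty} \sum_{I} L_m(a_0,\dots,a_k) = \lim_{m\to\infty} 1 = 1 \quad\text{a.s.}
\]

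\emph{Case $j\ge 1$.} Fix the prefix $(a_0,\dots,a_{j-1})$ and take $I = A^{k-j+1}$, indexed by the tails $(a_j,\dots,a_k)$, with $U^{(a_j,\dots,a_k)}_m = L_m(a_0,\dots,a_k)$. These are again non-negative martingales. Their sum equals $L_m(a_0,\dots,a_{j-1})\le 1$ almost surely by the consistency identity recorded just before the lemma, which holds for each $m$. The lemma gives
\[
\sum_{I} L_\infty(a_0,\dots,a_k) = \lim_{m\to\infty} L_m(a_0,\dots,a_{j-1}) \quad\text{a.s.,}
\]
and the right-hand side is $L_\infty(a_0,\dots,a_{j-1})$ by definition of the martingale limit.

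Since there are only countably many prefixes and values of $j,k$, the almost sure statements hold simultaneously. The only point needing a little care is the martingale property of $L_m$ in $m$ under $\mu_Z$. This reduces to the tower property for conditional probabilities of $\Xoriginal$ given the increasing $\sigma$-fields generated by $f(\Xoriginal_{\le m})$, transported to $Z$ via $Z\eqdist f(\Xoriginal)$, and it was already asserted in the text.
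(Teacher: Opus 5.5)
Your proposal is correct and is the argument the paper intends. The paper records the finite-$m$ consistency identities for $L_m$ and $M_m$ and states that Lemma~\ref{lem:martingalepartitionofunity} ``will be used to show that the same holds with $L_m$ replaced by $L_\infty$''; the corollary is then stated without a separate proof, and your application of that lemma with index set the tails $(a_j,\dots,a_k)$, together with your check of its hypotheses (non-negativity, the martingale property transferred via $Z \eqdist f(\Xoriginal)$, and the sum bound), fills in exactly that step.
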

\begin{lem}\label{lem: varphim converges}
    Under Assumption~\ref{assump:fullsupport},  the sequence $\varphi_m(a,b)$ converges to a finite limit as $m \to \infty$, for every $(a,b) \in \Delta$.
\end{lem}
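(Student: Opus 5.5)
The plan is to use the case $k=0$ of Lemma~\ref{lem: joint dist of Wm}, i.e.\ equation~\eqref{eqn: Wm init}. For $(a,b)\in\Delta$ with $c=f(a)=g(b)$, it rewrites $\varphi_m(a,b)$ as
\[
\varphi_m(a,b)=\frac{\upalpha_Z(c)}{\upalpha_X(a)\upalpha_Y(b)}\,\mathbb{E}\bigl[L_m(a)\,M_m(b)\bigr].
\]
Under Assumption~\ref{assump:fullsupport} the prefactor is a finite positive constant independent of $m$, since $\upalpha_X(a),\upalpha_Y(b),\upalpha_Z(c)>0$. So it suffices to show that $\mathbb{E}[L_m(a)M_m(b)]$ converges as $m\to\infty$.

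The random variables $L_m(a)=\mathbb{P}(X^m_0=a\mid Z_{\le m})$ and $M_m(b)=\mathbb{P}(Y^m_0=b\mid Z_{\le m})$ are bounded martingales in $m$ with respect to the filtration generated by $Z$, as noted just before Lemma~\ref{lem:martingalepartitionofunity}. By the martingale convergence theorem they converge $\mu_Z$-a.s.\ to $L_\infty(a)$ and $M_\infty(b)$. Hence the product $L_m(a)M_m(b)$ converges a.s.\ to $L_\infty(a)M_\infty(b)$ and is bounded by $1$. Dominated convergence then gives
\[
\mathbb{E}[L_m(a)M_m(b)]\to \mathbb{E}[L_\infty(a)M_\infty(b)].
\]
Consequently $\varphi_m(a,b)$ converges to
\[
\frac{\upalpha_Z(c)}{\upalpha_X(a)\upalpha_Y(b)}\,\mathbb{E}\bigl[L_\infty(a)M_\infty(b)\bigr],
\]
which is finite; indeed it is at most $\upalpha_Z(c)/(\upalpha_X(a)\upalpha_Y(b))$. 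The same bound shows that each $\varphi_m$ is pointwise finite on $\Delta$, which was promised earlier.

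The only point needing care is the identification of the expectation in~\eqref{eqn: Wm init} with the conditioning on $Z_{\le m}$ rather than on all of $Z$. This identification is already part of Lemma~\ref{lem: joint dist of Wm}. It holds because $X^m_0$ depends on $Z$ only through $Z_{\le m}$ by construction: its conditional law is that of $\Xoriginal$ given $f(\Xoriginal_{\le m})=Z_{\le m}$. So I do not expect any real obstacle here; the argument is a direct martingale-plus-dominated-convergence step.
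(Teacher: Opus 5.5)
Your proposal is correct and follows essentially the same route as the paper: you use the $k=0$ case of Lemma~\ref{lem: joint dist of Wm} to write $\varphi_m(a,b)$ as a fixed positive multiple of $\mathbb{E}[L_m(a)M_m(b)]$, and then apply almost sure convergence of the bounded martingales $L_m(a)$, $M_m(b)$ together with bounded convergence. Your remark that $\mathbb{E}[L_m(a)M_m(b)]\le 1$ gives the uniform bound $\varphi_m(a,b)\le \upalpha_Z(c)/(\upalpha_X(a)\upalpha_Y(b))$ is also right, and it matches the bound the paper derives in Section~\ref{ss:altdist}.
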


\begin{proof}
We have $L_m(a) M_m(b) \to L_\infty(a) M_\infty(b)$ almost surely as $m \to \infty$. Since $L_m(a) M_m(b) \in [0,1]$ a.s. this implies that \[\mathbb{E}(L_m(a) M_m(b)) \to \mathbb{E}(L_\infty(a) M_\infty(b))\,.\] Therefore, for each $(a,b) \in \Delta$, as $m\to\infty$,
\[
    \mathbb{P}(W^m_0 = (a,b)) = \mathbb{E}(\prob{W^m_0 = (a,b) | Z}) \;\to\; \mathbb{E}[L_\infty(a) M_\infty(b)] \,.
\]
Then using~\eqref{eqn: Wm init}, with $c = f(a) = g(b)$, we have as $m \to \infty$
\begin{multline} 
    \frac{\upalpha_X(a)\upalpha_Y(b)}{\upalpha_Z(c)} \varphi_m(a,b) = \mathbb{P}(W_0^m = (a,b))\\ = \mathbb{E}(L_m(a)M_m(b)) \to \mathbb{E}[L_\infty(a) M_\infty(b)]\,.\label{eq:alphaab}
\end{multline} 
This implies that $\varphi_m(a,b)$ converges as $m \to \infty$, since by Assumption~\ref{assump:fullsupport} we have 
$\upalpha_X(a)\upalpha_Y(b)/\upalpha_Z(c) \neq 0$.
\end{proof}
    To show that $\varphi_m(a,b)$ converges without using Assumption~\ref{assump:fullsupport}, we will need to construct alternative initial distributions $\upalpha'_X$, $\upalpha'_Y$, and $\upalpha'_Z$ which do satisfy Assumption~\ref{assump:fullsupport}, and apply Lemma~\ref{lem: varphim converges} to them. This will be done in the next section.

\subsection{Alternative initial distributions}
\label{ss:altdist}

Suppose now that $\upalpha_X, \upalpha_Y$, and $\upalpha_Z$ do not satisfy Assumption~\ref{assump:fullsupport}. Instead, we construct the following alternative initial distributions:
\[
    \upalpha'_X = \sum_{n=0}^\infty 2^{-n-1} \upalpha_X P_X^n\,,
\quad
    \upalpha'_Y = \sum_{n=0}^\infty 2^{-n-1} \upalpha_Y P_Y^n\,,
\quad
    \upalpha'_Z = \sum_{n=0}^\infty 2^{-n-1} \upalpha_Z P_Z^n\,.
\]
These have the following required properties:
\begin{itemize}
    \item $\supp(\upalpha'_X) = A$, $\supp(\upalpha'_Y) = B$, and $\supp(\upalpha'_Z) = C$,
    \item $f_*(\upalpha'_X) = g_*(\upalpha'_Y) = \upalpha'_Z$,
    \item $f(\MC(\upalpha'_X,P_X)) \eqdist \MC(\upalpha'_Z,P_Z) \eqdist g(\MC(\upalpha'_Y,P_Y))$.
\end{itemize}
The first two properties in the list above are straightforward to check. For the third, observe that if $\Xoriginal = \MC(\upalpha_X, P_X)$ and $\Zoriginal = f(\Xoriginal)$, then 
\[
    \MC(\upalpha'_X, P_X) \eqdist (\Xoriginal_{t+G})_{t \ge 0}\,,
\] 
where $G$ is a Geometric($1/2$) random variable taking values in $\{0,1,2,\dots\}$ and independent of $\Xoriginal$. Hence 
\[
    f(\MC(\upalpha'_X, P_X)) \eqdist (f(\Xoriginal_{t+G}))_{t \ge 0} = (\Zoriginal_{t+G})_{t \ge 0}\,.
\]
Since $G$ is independent of $\Xoriginal$, it is also independent of $\Zoriginal$, so 
\[
    (\Zoriginal_{t+G})_{t \ge 0} \eqdist \MC(\upalpha'_Z,P_Z)\,.
\]

Let $\tilde{X}=\MC(\upalpha_X',P_X)$, $\tilde{Y}=\MC(\upalpha'_Y, P_Y)$, and $\tilde{Z}=\MC(\upalpha'_Z, P_Z)$. Define $\tilde{W}^m$ in the same way as $W^m$ except with $\tilde{X}$, $\tilde{Y}$, and $\tilde{Z}$ instead of $\Xoriginal$, $\Yoriginal$, and $\Zoriginal$. Then following the same steps as above, we arrive at
\begin{equation}\label{eq:modifiedprobs} \mathbb{P}(\tilde{W}^m_0 = (a,b)) 
=\frac{\upalpha'_X(a_0)\upalpha'_Y(b_0)}{\upalpha'_Z(c_0)}\varphi_m(a_0,b_0)
    \,.
\end{equation}
Note that $\varphi_m$ does not depend on the choice of initial distributions, since it is given by $\varphi_m = R^m \mathbf{1}_\Delta$.
We may now apply Lemma~\ref{lem: varphim converges} to this modified setup, which does satisfy Assumption~\ref{assump:fullsupport},
to conclude that $\varphi_m(a,b)$ converges to a finite limit as $m \to \infty$ for every $(a,b) \in \Delta$.

We may now define $\varphi: \Delta \to [0,\infty)$ to be the coordinatewise limit: 
\begin{equation}
\label{eqn:varphi def}
    \varphi = \lim_{m \to \infty} \varphi_m = \lim_{m \to \infty} R^m \mathbf{1}_\Delta.
\end{equation} 
Using the fact the left-hand side of equation~\ref{eq:modifiedprobs} is a probability and therefore less than or equal to $1$, we obtain an upper bound on $\varphi_m(a,b)$ for every $m$, and hence an upper bound for $\varphi(a,b)$:
\[ \varphi(a,b) \le \frac{\upalpha_Z'(c)}{\upalpha_X'(a)\upalpha_Y'(b)},\]
where $c = f(a) = g(b)$ as usual. Another observation that will be useful is that if $\varphi(a,b) = 0$ and $R((a,b),(a',b')) > 0$ then
\begin{eqnarray*} 0 \ge \varphi(a,b) & = & \lim_{m \to \infty} \sum_{(c,d) \in \Delta} R^m((a,b),(c,d)) \\ 
&\ge& R((a,b),(a',b')) \lim_{m \to \infty} \sum_{(c,d) \in \Delta} R^{m-1}((a',b'),(c,d))\\
&=& R((a,b),(a',b')) \,\varphi(a',b'), 
\end{eqnarray*}
 so $\varphi(a',b') = 0$ also. 
 
Looking at the definition of $\upalpha$ in equation~\eqref{eq: coupled pi def}, and using~\eqref{eq:alphaab}, we see that for $(a,b) \in \Delta$ with $f(a) = g(b) = c$ we have
\begin{equation}\label{eq:alphaab_as_expectation}
\upalpha(a,b) = \frac{\upalpha_X(a)\upalpha_Y(b)}{\upalpha_Z(c)}\varphi(a,b) = \mathbb{E}[L_\infty(a)M_\infty(b)].
\end{equation}
Note that if $\upalpha_Z(c) = 0$, since then $L_0(a)  = \mathbb{P}(\Xoriginal_0 = a \mid f(\Xoriginal_0) = Z_0) = 0$ a.s. so in this case the expectation in~\eqref{eq:alphaab_as_expectation} is $0$, agreeing with~\eqref{eq: coupled pi def}. We now extend $\upalpha$ to all of $A \times B$ by letting $\upalpha(a,b) = 0$ for $(a,b) \in (A \times B) \setminus \Delta$, and then~\eqref{eq:alphaab_as_expectation} holds on all of $A \times B$, since for $(a,b) \in (A \times B) \setminus \Delta$ we have $L_m(a)M_m(b) = 0$ a.s. for every $m$, so $L_\infty(a)M_\infty(b) = 0$ a.s.
\begin{remark} We will show in Proposition~\ref{prop: cond laws are martingale limits} below that almost surely 
\[L_\infty(a) = \mathbb{P}(\Xoriginal_0 = a \mid f(\Xoriginal) = \Zcopy)\] and \[M_\infty(b) = \mathbb{P}(\Yoriginal_0 = b \mid g(\Yoriginal) = \Zcopy).\]
(The proof of Proposition~\ref{prop: cond laws are martingale limits} is quite technical, so we defer it until Section~\ref{ss:condindep}, near the end of the proof.)
It follows that we have
\begin{eqnarray}
\label{eqn: pi as expectation}
    \upalpha(a,b) & = & \notag
    \frac{\upalpha_X(a)\upalpha_Y(b)}{\upalpha_Z(c)}\varphi(a,b)\\
    &=&
    \mathbb{E}\big[\mathbb{P}(\Xoriginal_0 = a \mid f(\Xoriginal)=\Zcopy) \mathbb{P}(\Yoriginal_0 = b \mid g(\Yoriginal)=\Zcopy)\big]
    \,.
\end{eqnarray}
Since
\[ \frac{\upalpha_X(a)\upalpha_Y(b)}{\upalpha_Z(c)} = \mathbb{E}[\mathbb{P}(\Xoriginal_0 = a \mid f(\Xoriginal_0)=Z_0) \mathbb{P}(\Yoriginal_0 = b \mid g(\Yoriginal_0)=Z_0)],\]
we can write down an expression for $\varphi$ in terms of conditional probabilities, with no limits:
\[
    \varphi(a,b) =
    \frac{
    \mathbb{E}[\mathbb{P}(\Xoriginal_0 = a \mid f(\Xoriginal)=Z) \mathbb{P}(\Yoriginal_0 = b \mid g(\Yoriginal)=Z)]
    }{
    \mathbb{E}[\mathbb{P}(\Xoriginal_0 = a \mid f(\Xoriginal_0)=Z_0) \mathbb{P}(\Yoriginal_0 = b \mid g(\Yoriginal_0)=Z_0)]
    }.
\]
\end{remark}
 This shows that $\varphi$ quantifies the distinction between conditioning only on $Z_0$ and conditioning on the whole trajectory of $Z$.
\subsection{\texorpdfstring{$\upalpha$ is a coupling of $\upalpha_X$ and $\upalpha_Y$}{pi is a coupling of piX and piY}}
\label{ss: pi is pm}
$\upalpha$ takes non-zero values only on the set $\Delta'$ defined by
 \[\Delta' = \{(a,b) \in \Delta\,:\, \varphi(a,b) > 0\}.\]
\begin{lem}
\label{lem: pi is a coupling}
    $\upalpha \in \PM(\Delta')$ and $\proj^1_*(\upalpha) = \upalpha_X$, $\proj^2_*(\upalpha) = \upalpha_Y$.
\end{lem}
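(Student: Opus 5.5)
The plan is to use the representation \eqref{eq:alphaab_as_expectation}, namely $\upalpha(a,b)=\mathbb{E}[L_\infty(a)M_\infty(b)]$, which holds for all $(a,b)\in A\times B$ (with $\upalpha(a,b)=0$ off $\Delta$). Nonnegativity is immediate. Moreover $\upalpha(a,b)>0$ forces $(a,b)\in\Delta$, $\upalpha_Z(c)>0$ and $\varphi(a,b)>0$, so $\upalpha$ vanishes off $\Delta'$. It therefore suffices to compute the marginals. Summing the first marginal over $a$ then also gives total mass one.

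For the first marginal, fix $a\in A$ and sum over $b\in B$. By Tonelli,
\[
\sum_{b\in B}\upalpha(a,b)=\mathbb{E}\Bigl[L_\infty(a)\sum_{b\in B}M_\infty(b)\Bigr]=\mathbb{E}[L_\infty(a)],
\]
where the last step uses Corollary~\ref{cor:Linftyisaprob} with $j=0$, $k=0$, which gives $\sum_b M_\infty(b)=1$ almost surely. By bounded convergence and the martingale property of $(L_m(a))_{m\ge0}$,
\[
\mathbb{E}[L_\infty(a)]=\mathbb{E}[L_0(a)]=\mathbb{E}\bigl[\mathbb{P}(\Xoriginal_0=a\mid f(\Xoriginal_0)=Z_0)\bigr].
\]
Since $Z_0\sim\upalpha_Z=f_*(\upalpha_X)$, this equals $\upalpha_X(a)$, so $\proj^1_*(\upalpha)=\upalpha_X$. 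The symmetric argument, using $\sum_a L_\infty(a)=1$ almost surely, gives $\proj^2_*(\upalpha)=\upalpha_Y$. Summing over $a$ then yields total mass $1$, so $\upalpha\in\PM(\Delta')$.

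The main point needing care is the legitimacy of \eqref{eq:alphaab_as_expectation} when Assumption~\ref{assump:fullsupport} fails. There, convergence of $\varphi_m$ was obtained via the modified initial distributions. However, the identity $\frac{\upalpha_X(a)\upalpha_Y(b)}{\upalpha_Z(c)}\varphi_m(a,b)=\mathbb{E}[L_m(a)M_m(b)]$ from \eqref{eqn: Wm init} holds for the original chains whenever $\upalpha_Z(c)>0$. The left side converges because $\varphi_m$ does, and the right side converges to $\mathbb{E}[L_\infty(a)M_\infty(b)]$ by bounded convergence. When $\upalpha_Z(c)=0$ both sides are $0$, as noted after \eqref{eq:alphaab_as_expectation}.

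With that established, the only other ingredient is the interchange of sum and limit, which is exactly what Corollary~\ref{cor:Linftyisaprob}, via Lemma~\ref{lem:martingalepartitionofunity}, supplies. Without it, Fatou's lemma would only give $\sum_b M_\infty(b)\le1$, and hence only $\proj^1_*(\upalpha)\le\upalpha_X$.
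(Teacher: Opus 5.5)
Your proof is correct and follows essentially the same route as the paper. It uses the representation $\upalpha(a,b)=\mathbb{E}[L_\infty(a)M_\infty(b)]$, Tonelli, and Corollary~\ref{cor:Linftyisaprob}; the only differences are cosmetic: you obtain the total mass by summing the marginals rather than by a separate double sum, and you spell out $\mathbb{E}[L_\infty(a)]=\mathbb{E}[L_0(a)]=\upalpha_X(a)$ and the validity of \eqref{eq:alphaab_as_expectation} without Assumption~\ref{assump:fullsupport}, both of which the paper leaves implicit.
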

\begin{proof}
    Sum equation~\eqref{eq:alphaab_as_expectation} over all $(a,b) \in A \times B$ and apply Tonelli's theorem twice and then Corollary~\ref{cor:Linftyisaprob} to obtain
    \begin{eqnarray*}\sum_{(a,b) \in A \times B} \upalpha(a,b) & = &
    \sum_{(a,b) \in A \times B} \mathbb{E}\big[L_\infty(a)M_\infty(b)\big]\\
    & = &
    \mathbb{E}\bigg[\sum_{(a,b) \in A \times B}L_\infty(a)M_\infty(b) \bigg]\\
    & = & 
    \mathbb{E}\bigg[\sum_{a \in A} L_\infty(a) \sum_{b \in B} M_\infty(b)\bigg] = \mathbb{E}(1) = 1\,.
    \end{eqnarray*}
    Therefore $\upalpha \in \PM(\Delta')$. If instead of summing over all $(a,b) \in \Delta$, we fix $a \in A$ and sum only over $b \in B$, we obtain by a similar argument that
    \begin{eqnarray*}
        \sum_{b \in B} \upalpha(a,b) 
        & = &
        \sum_{b \in B}\mathbb{E}\big[L_\infty(a)M_\infty(b)\big]
        \\
        & = & 
        \mathbb{E}\big[L_\infty(a)\sum_{b \in B}M_\infty(b) \big]
        \\
        & = & 
        \mathbb{E}(L_\infty(a)) = \mathbb{P}(\Xoriginal_0 = a) =  \upalpha_X(a).
    \end{eqnarray*}
    That is, $\proj^1_*(\upalpha) = \upalpha_X$. Likewise, $\proj^2_*(\upalpha) = \upalpha_Y$.
\end{proof}

\subsection{\texorpdfstring{$P$ is stochastic and $\varphi$ is a right eigenvector of $R$}{P is stochastic}} 
\label{ss: P is stochastic}
Recall that $P((a,b),(a',b'))$ is defined only for $(a,b),(a',b') \in \Delta'$, by equation~\eqref{eq: coupled P def}. Note that $P$ does not depend in any way on the choice of initial distributions.
By definition of $P$, the statement that $\varphi$ is a right eigenvector of $R$ for the eigenvalue $1$ is equivalent to the statement that $P$ is a stochastic matrix. 
\begin{lem}
\label{lem: P is stochastic}
    $P$ is stochastic.
\end{lem}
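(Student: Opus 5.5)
The plan is to show that $\varphi$ is a right eigenvector of $R$ with eigenvalue $1$. That is, for every $(a,b)\in\Delta'$ we want
\[
\sum_{(a',b')\in\Delta} R\bigl((a,b),(a',b')\bigr)\,\varphi(a',b') = \varphi(a,b).
\]
Since $\varphi$ vanishes on $\Delta\setminus\Delta'$, the sum may equivalently be taken over $\Delta'$. Dividing by $\varphi(a,b)>0$ then gives exactly $\sum_{(a',b')\in\Delta'}P((a,b),(a',b'))=1$, and the entries of $P$ are clearly non-negative.

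The naive route is to pass to the limit in $\varphi_{m+1}=R\varphi_m$. Fatou's lemma only gives $R\varphi\le\varphi$, because $R$ may have infinitely many nonzero entries in a row and we have no domination. So instead I will use the probabilistic representation with the alternative full-support initial distributions of Section~\ref{ss:altdist}.

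Work with $\tilde X,\tilde Y,\tilde Z$ and $\tilde W^m$, and denote the corresponding martingale limits by $\tilde L_\infty,\tilde M_\infty$. Fix $(a,b)\in\Delta$ with $c=f(a)=g(b)$, and set $\kappa=\upalpha'_X(a)\upalpha'_Y(b)/\upalpha'_Z(c)$, which is strictly positive by full support. Apply Lemma~\ref{lem: joint dist of Wm} with $k=1$ and $m\ge1$ to get, for $(a',b')\in\Delta$,
\[
\mathbb{P}\bigl(\tilde W^m_0=(a,b),\tilde W^m_1=(a',b')\bigr)=\kappa\,R\bigl((a,b),(a',b')\bigr)\,\varphi_{m-1}(a',b').
\]
The middle expression in \eqref{eqn: Wm joint} writes the left side as $\mathbb{E}[\tilde L_m(a,a')\tilde M_m(b,b')]$. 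This is a product of bounded martingales, so by bounded convergence it tends to $\mathbb{E}[\tilde L_\infty(a,a')\tilde M_\infty(b,b')]$. On the right, $\varphi_{m-1}(a',b')\to\varphi(a',b')$ by \eqref{eqn:varphi def}. Hence
\[
\kappa\,R\bigl((a,b),(a',b')\bigr)\,\varphi(a',b')=\mathbb{E}\bigl[\tilde L_\infty(a,a')\tilde M_\infty(b,b')\bigr].
\]
For $(a',b')\in(A\times B)\setminus\Delta$ the right-hand expectation is $0$, since $\tilde L_m(a,a')\tilde M_m(b,b')=0$ a.s. for every $m\ge1$. The same holds for $(a',b')\in\Delta$ with $P_Z(c,f(a'))=0$, where the left side is also $0$ because $R$ vanishes there.

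Next sum over all $(a',b')\in A\times B$. By Tonelli and Corollary~\ref{cor:Linftyisaprob} (the case $j=1$, $k=1$),
\[
\sum_{a',b'}\mathbb{E}\bigl[\tilde L_\infty(a,a')\tilde M_\infty(b,b')\bigr]
=\mathbb{E}\Bigl[\sum_{a'}\tilde L_\infty(a,a')\sum_{b'}\tilde M_\infty(b,b')\Bigr]
=\mathbb{E}\bigl[\tilde L_\infty(a)\tilde M_\infty(b)\bigr].
\]
By \eqref{eq:alphaab} applied to the tilde setup, the last quantity equals $\kappa\,\varphi(a,b)$. Combining the two displays gives $\kappa\,(R\varphi)(a,b)=\kappa\,\varphi(a,b)$, and dividing by $\kappa>0$ yields $R\varphi=\varphi$ on all of $\Delta$, hence in particular on $\Delta'$.

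The main obstacle is precisely this exchange of limit and infinite sum, which Fatou does not resolve. It is handled by Lemma~\ref{lem:martingalepartitionofunity} through Corollary~\ref{cor:Linftyisaprob}: no mass is lost in the martingale limits. The remaining points to check carefully are routine. First, the $k=1$ case of Lemma~\ref{lem: joint dist of Wm} requires $\upalpha'_Z(c)>0$, which holds by full support. Second, $\varphi$ and $R$ do not depend on the choice of initial distributions, so the identity transfers back to the original setting.
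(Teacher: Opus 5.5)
Your proposal is correct and is essentially the paper's proof. Both arguments pass to the full-support initial distributions and use the $k=1$ case of Lemma~\ref{lem: joint dist of Wm} to write the two-step probability as $\mathbb{E}[L_m(a,a')M_m(b,b')]$. Both then let $m\to\infty$, sum over $(a',b')$ using Tonelli and Corollary~\ref{cor:Linftyisaprob}, and cancel the positive prefactor. The only difference is cosmetic: you prove $R\varphi=\varphi$ on all of $\Delta$ with the constant $\kappa$, whereas the paper writes the identity as $\upalpha(a,b)\sum_{(a',b')}P((a,b),(a',b'))=\upalpha(a,b)$ for $(a,b)\in\Delta'$, which forces it to remark on extending $P$ by zeros.
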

\begin{proof}
    We mimic the proof of Lemma~\ref{lem: pi is a coupling}. First extend the rows of $P$ by zeros if necessary so that its columns are indexed by $\Delta$ rather than $\Delta'$. Fix $(a,b) \in \Delta'$. Since $P$ does not depend on the initial distributions we may assume (by switching to $\upalpha'_X, \upalpha'_Y$ and $\upalpha'_Z$ if necessary) that Assumption~\ref{assump:fullsupport} holds, and therefore $\upalpha(a,b) > 0$. Specializing Lemma~\ref{lem: joint dist of Wm} to $k=1$, we have for any $(a',b') \in \Delta'$ that
    \begin{eqnarray*}
    \mathbb{P}(W^m_0 = (a,b), W^m_1 = (a',b')) \! &=&
    \!\frac{\upalpha_X(a)\upalpha_Y(b)}{\upalpha_Z(c)}R((a,b),(a',b'))\,\varphi_{m-1}(a',b')
    \\
    & = & 
    \!\mathbb{E}[L_m(a,a')M_m(b,b')].
    \end{eqnarray*}
    Taking the limit as $m\to\infty$, we get
    \begin{eqnarray}\notag
        \mathbb{P}(W_0 = (a,b), W_1 = (a',b'))  =  \upalpha(a,b)P((a,b),(a',b'))\\
        = \mathbb{E}(L_\infty(a,a')M_\infty(b,b')),\label{eq:transition}
    \end{eqnarray}
    and this is consistent with our extension of $P$ since both sides above are $0$ if $\varphi(a',b') = 0$. Now sum equation~\eqref{eq:transition} over $(a',b') \in \Delta$, use Tonelli's theorem to swap the expectation and the sum, change to a sum over all $(a',b') \in A \times B$, using the fact that $L_\infty(a,a')M_\infty(b,b') = 0$ if $f(a') \neq g(b')$, separate the sum over $a'$ and the sum over $b'$, and use Corollary~\ref{cor:Linftyisaprob}, to get
    \begin{eqnarray*}
        \upalpha(a,b) \sum_{(a',b') \in \Delta} P((a,b),(a',b')) 
         &=& 
        \sum_{(a',b') \in \Delta}\mathbb{E}[L_\infty(a,a')M_\infty(b,b')]
         \\ 
         &=& \mathbb{E}\bigg[\sum_{(a',b') \in \Delta} L_\infty(a,a')M_\infty(b,b')\bigg]\\
         &=& \mathbb{E}\bigg[\sum_{a' \in A} L_\infty(a,a')\sum_{b' \in B}M_\infty(b,b')\bigg]\\
         &=&
         \mathbb{E}[L_\infty(a)M_\infty(b)]
         \\
         & = &   \upalpha(a,b).
    \end{eqnarray*}
    Since $(a,b) \in \Delta'$, we have $\upalpha(a,b) > 0$ so we may cancel $\upalpha(a,b)$ from both sides to conclude that $P$ is stochastic.
\end{proof}

\subsection{The marginals have the correct laws}
\label{ss: completing the proof}

We now know that $W=\MC(\upalpha,P)$ is a well-defined DTHMC on the state space $\Delta'$, since $\upalpha\in\PM(\Delta')$ and $P\in\SM(\Delta')$.

\begin{lem}
 $W$ is a coupling of the given marginal chains: $\proj^1(W) \eqdist \Xoriginal$ and $\proj^2(W) \eqdist \Yoriginal$.
\end{lem}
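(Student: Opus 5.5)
The plan is the one sketched in the outline (Section~\ref{ss:outline}): show that the finite-dimensional distributions of $W^m$ converge to those of $W$, then pass to the limit using that each $W^m$ already has the correct marginals. Fix $k\ge 0$ and a finite string $(a_0,b_0),\dots,(a_k,b_k)\in\Delta$, with $c_i=f(a_i)=g(b_i)$.

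\textbf{Step 1: convergence of the finite-dimensional distributions.} Suppose first that $\upalpha_Z(c_0)>0$. By Lemma~\ref{lem: joint dist of Wm},
\[
\mathbb{P}(W^m_0=(a_0,b_0),\dots,W^m_k=(a_k,b_k))=\frac{\upalpha_X(a_0)\upalpha_Y(b_0)}{\upalpha_Z(c_0)}\biggl(\prod_{i=1}^k R((a_{i-1},b_{i-1}),(a_i,b_i))\biggr)\varphi_{m-k}(a_k,b_k).
\]
Since $\varphi_{m-k}\to\varphi$ by \eqref{eqn:varphi def}, this tends to the same expression with $\varphi(a_k,b_k)$ in place of $\varphi_{m-k}(a_k,b_k)$.
\begin{itemize}
\item If every $(a_i,b_i)\in\Delta'$, the telescoping computation in the remark after Lemma~\ref{lem: joint dist of Wm} identifies this limit with $\mathbb{P}(W_0=(a_0,b_0),\dots,W_k=(a_k,b_k))$, giving \eqref{eqn: convergence of fdds}.
\item If some $(a_j,b_j)\notin\Delta'$, the right-hand side of \eqref{eqn: convergence of fdds} is $0$, since $W$ lives on $\Delta'$. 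The limit is also $0$. If the product of $R$-entries vanishes, this is immediate. Otherwise every step has positive $R$-entry, and the observation in Section~\ref{ss:altdist} (that $\varphi(a,b)=0$ and $R((a,b),(a',b'))>0$ force $\varphi(a',b')=0$) propagates $\varphi=0$ from $(a_j,b_j)$ forward to $(a_k,b_k)$.
\end{itemize}
If instead $\upalpha_Z(c_0)=0$, both sides vanish: the left because $\mathbb{P}(Z_0=c_0)=0$, the right because $\upalpha(a_0,b_0)=0$. Strings leaving $\Delta$ before time $m$ have probability $0$ for both $W^m$ with $m\ge k$ and $W$. So every cylinder probability of $W^m$ converges to that of $W$.

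\textbf{Step 2: passing to the marginals.} Fix $a_0,\dots,a_k\in A$. I would write
\[
\mathbb{P}(X^m_0=a_0,\dots,X^m_k=a_k)=\sum_{b_0,\dots,b_k}\mathbb{P}\bigl(W^m_i=(a_i,b_i),\ 0\le i\le k\bigr),
\]
and the analogous identity for $W$. The left side equals $\mathbb{P}(\Xoriginal_0=a_0,\dots,\Xoriginal_k=a_k)$ for every $m$ by construction. The main obstacle is exchanging the limit $m\to\infty$ with this countable sum. Fatou's lemma only gives one inequality:
\[
\mathbb{P}(\proj^1 W\text{ has prefix }a_{\le k})\le\mathbb{P}(\Xoriginal\text{ has prefix }a_{\le k}).
\]
To get equality, I would sum this inequality over all $a_0,\dots,a_k\in A$. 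Both sides total $1$: the left because $W$ is a genuine Markov chain (Lemmas~\ref{lem: pi is a coupling} and~\ref{lem: P is stochastic}), the right because $\Xoriginal$ is. A countable family of nonnegative inequalities with equal finite totals must hold termwise, so equality holds for each prefix. The same argument applies to $\proj^2$, using that $Y^m\eqdist\Yoriginal$.

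\textbf{Conclusion.} This shows that the laws of $\proj^1(W)$ and $\Xoriginal$ agree on all cylinder sets, and likewise for $\proj^2(W)$ and $\Yoriginal$. Since cylinders form a $\pi$-system generating the product $\sigma$-algebra, we conclude $\proj^1(W)\eqdist\Xoriginal$ and $\proj^2(W)\eqdist\Yoriginal$. This tightness-free Fatou argument replaces the weak-convergence phrasing of the outline.
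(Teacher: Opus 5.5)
Your proof is correct, but at the key step, exchanging $\lim_m$ with the sum over $b_0,\dots,b_k$, it takes a different route from the paper.

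\textbf{The paper's route.} The paper does not use Fatou. It writes each cylinder probability of $W$ as $\lim_m \mathbb{E}[L_m(a_{\le k})M_m(b_{\le k})]=\mathbb{E}[L_\infty(a_{\le k})M_\infty(b_{\le k})]$ by dominated convergence. It then sums over $b_0,\dots,b_k$ inside the expectation (Tonelli) and uses the almost-sure normalization $\sum_{b}M_\infty(b_0,\dots,b_k)=1$ from Corollary~\ref{cor:Linftyisaprob}. So it is the martingale-partition lemma that prevents mass from escaping. Finally it identifies $\mathbb{E}[L_\infty(a_{\le k})]=\lim_m\mathbb{E}[L_m(a_{\le k})]=\mathbb{P}(\Xoriginal_0=a_0,\dots,\Xoriginal_k=a_k)$.

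\textbf{Your route.} You take the one-sided Fatou bound and close it with a Scheffé-type mass-conservation argument. This uses only that $\MC(\upalpha,P)$ is already a probability law, by Lemmas~\ref{lem: pi is a coupling} and~\ref{lem: P is stochastic}. It is more elementary at this step and needs no martingale limits here. The machinery is not removed from the overall proof, though, since those two lemmas were themselves proved via Corollary~\ref{cor:Linftyisaprob}. Your Step~1 also handles strings passing through $\Delta\setminus\Delta'$ explicitly, by propagating zeros of $\varphi$ along positive $R$-entries. The paper leaves this case implicit in the one-sided setting.

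\textbf{What each buys.} The paper's route produces \eqref{eqn: W fdds in terms of Linfty and Minfty} and \eqref{eqn: X fdds in terms of Linfty} along the way. Both are reused directly in the proof of Proposition~\ref{prop: cond laws are martingale limits}. With your approach they would need a separate derivation. Both follow in a line from your Step~1 and your conclusion by bounded convergence, since $L_mM_m\in[0,1]$ converges almost surely.
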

\begin{proof}
Since the marginal processes $\proj^1(W)$ and $\proj^2(W)$ take their values in $A^\mathbb{N}$ and $B^\mathbb{N}$ with the product topologies, it suffices to check that their finite-dimensional distributions agree with those of $\Xoriginal$ and $\Yoriginal$ respectively. Let $k \ge 0$ and let $a_0, \dots, a_k \in A$. Combining equations~\eqref{eqn: Wm joint} and~\eqref{eqn: convergence of fdds}, and using the dominated convergence theorem, we obtain
\begin{equation}
\begin{aligned}
\mathbb{P}(W_0 = (a_0,b_0),& \dots, W_k = (a_k,b_k))  \\
 =  \lim_{m \to \infty}& \mathbb{E}[L_m(a_0, \dots, a_k) M_m(b_0, \dots, b_k)] \\  
 &=   \mathbb{E}[L_\infty(a_0, \dots, a_k) M_\infty(b_0, \dots, b_k)].
\label{eqn: W fdds in terms of Linfty and Minfty}
\end{aligned}
\end{equation}
Sum over all $b_0, \dots, b_k \in B$, then use Tonelli and Corollary~\ref{cor:Linftyisaprob} to obtain
\begin{eqnarray}
\mathbb{P}(X_0 = a_0, \dots, X_k = a_k) 
& = &
\sum_{b_0, \dots, b_k \in B} \mathbb{P}(W_0 = (a_0,b_0), \dots, W_k = (a_k,b_k))  \notag \\
& = &
\sum_{b_0, \dots, b_k \in B} \mathbb{E}[L_\infty(a_0, \dots, a_k) M_\infty(b_0, \dots, b_k)] \notag \\
& = &
\mathbb{E}\bigl[L_\infty(a_0, \dots, a_k)\sum_{b_0, \dots, b_k \in B}M_\infty(b_0, \dots, b_k)\bigr] \notag \\
& = & \mathbb{E}(L_\infty(a_0, \dots, a_k)) \,.
\label{eqn: X fdds in terms of Linfty}
\end{eqnarray}
Using dominated convergence,
\[ \mathbb{E}(L_\infty(a_0, \dots, a_k)) 
 =  \lim_{m \to \infty} \mathbb{E}(L_m(a_0, \dots, a_k)) \]
and from the definition of $L_m$ we have for every $m$ that
\begin{eqnarray*} \mathbb{E}(L_m(a_0, \dots, a_k))  & = & \mathbb{E}(\mathbb{P}(\Xoriginal_0 = a_0, \dots, \Xoriginal_k = a_k \mid Z_1, \dots, Z_m)\\
& = & \mathbb{P}(\Xoriginal_0 = a_0, \dots, \Xoriginal_k = a_k).
\end{eqnarray*}
Likewise, for any $b_0, \dots, b_k \in B$ we have
\[\mathbb{P}(Y_0 = b_0, \dots, Y_k = b_k) = \mathbb{P}(\Yoriginal_0 = b_0, \dots, \Yoriginal_k = b_k)\,.\qedhere\]
\end{proof}

\subsection{Proof of conditional independence}
\label{ss:condindep}

Our final task in proving Theorem~\ref{thm:DTHWMC} is to prove the conditional independence of $\Xcopy$ and $\Ycopy$ given $\Zcopy$. 

\begin{remark} 
    We have seen that $X^m$ and $Y^m$ are conditionally independent given $Z$, and $Z$ is a (fixed) continuous function of $X^m$ and also of $Y^m$, and that $(X^m,Y^m)$ converges in distribution to $(X,Y)$. The reader may be tempted to think that it should be possible to deduce from these facts alone that $X$ and $Y$ are conditionally independent given $Z$. However, this is not possible. The obstacle is best illustrated by the following simple example.  Let $\mathcal{E} = (\varepsilon_1, \varepsilon_2, \dots)$ be an i.i.d.~sequence of Bernoulli($1/2$) random variables. Then the random variable $U_n = (\varepsilon_n,\mathcal{E})$ is conditionally independent of  itself given $\mathcal{E}$, and $\mathcal{E}$ is a continuous function of $U_n$, indeed the same function for all $n$. However, $(U_n,U_{n})$ converges jointly in distribution to $((\varepsilon,\mathcal{E}),(\varepsilon,\mathcal{E}))$, where $\varepsilon$ is a Bernoulli($1/2$) random variable independent of the sequence $\mathcal{E}$. So conditional independence is lost in the distributional limit. In this general setup, to ensure that conditional independence is preserved in the limit a stronger mode of convergence is needed. For example, one could require \emph{stable convergence} with respect to the sub-$\sigma$-algebra generated by $Z$ (see H\"{a}usler and Luschgy~\cite{stablebook}), or almost sure convergence, (which can in fact be arranged as a consequence of stable convergence using a conditional Skorohod representation theorem due to Pratelli and Rigo~\cite{pratelli2023strong}), or convergence in total variation, which Lauritzen~\cite{lauritzen} proved suffices to preserve conditional independence in the limit. In our setting, convergence in total variation does not hold, and we prefer to avoid taking a detour into stable convergence or having to construct a coupling of all $(X^m,Y^m)$ such that $(X^m,Y^m) \to (X,Y)$ almost surely. Instead, our proof below uses a characterization of the conditional laws of $\Xcopy$ and $\Ycopy$ given $\Zcopy$ in terms of the martingale limits $L_\infty$ and $M_\infty$.
\end{remark}

Let  $h: \Delta \to C$ be the map
given by $h(a,b) = f(a) = g(b)$. Recall that $\mu_Z$ is the law of $Z$ and that $\Zcopy = f(\Xcopy) = g(\Ycopy) = h(W)$. The conditional independence of $X$ and $Y$ given $Z$ is an immediate consequence of the following proposition.

\begin{prop}
\label{prop: cond laws are martingale limits}
 We may define a regular family of conditional laws of $\Xcopy$ given $\Zcopy$ by 
    \[    \mathbb{P}(\Xcopy_0 = a_0, \dots, \Xcopy_k = a_k \mid \Zcopy) = L_\infty(a_0, \dots, a_k)\]
Similarly we may define a regular family of conditional laws of $\Ycopy$ given $\Zcopy$ by
    \[
        \mathbb{P}(\Ycopy_0 = b_0, \dots, \Ycopy_k = b_k \mid Z) = M_\infty(b_0, \dots, b_k)
    \]
and a conditional law of $W$ given $Z$ by
\[
  \mathbb{P}(W_0 = (a_0,b_0), \dots, W_k = (a_k,b_k) \mid Z) = L_\infty(a_0, \dots, a_k)M_\infty(b_0, \dots, b_k)\,.
\]  
This means that the above specifications on cylinder sets are consistent, and the random probability measures that they therefore define on $A^\mathbb{N}$, $B^\mathbb{N}$ and $\Delta^\mathbb{N}$ form regular families of conditional probability measures in the sense of the disintegration theorem. 
\end{prop}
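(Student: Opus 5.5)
The plan is to first establish consistency, then verify the defining property of a conditional law. Throughout I regard $L_\infty(a_0,\dots,a_k)$ and $M_\infty(b_0,\dots,b_k)$ as measurable functions of the trajectory $z\in C^{\mathbb{N}}$, defined $\mu_Z$-a.e. Since $Z=h(W)$ has law $\mu_Z$ (its finite-dimensional laws are obtained by summing those of $X$, which we know equal those of $\Xoriginal$), these functions may be evaluated at $\Zcopy$.

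\textbf{Step 1: consistency.} By Corollary~\ref{cor:Linftyisaprob}, off a single $\mu_Z$-null set (the family of strings is countable), the map $(a_0,\dots,a_k)\mapsto L_\infty(a_0,\dots,a_k)(z)$ is a consistent family of probability vectors. Kolmogorov's extension theorem then yields a probability measure $\lambda_z$ on $A^{\mathbb{N}}$; I set $\lambda_z$ to be an arbitrary fixed measure on the null set. The same construction gives $\kappa_z$ on $B^{\mathbb{N}}$. The product values $L_\infty M_\infty$ are consistent because they come from the product measure $\lambda_z\otimes\kappa_z$. Measurability of $z\mapsto\lambda_z(S)$ holds on cylinders and extends to all measurable $S$ by a monotone class argument.

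\textbf{Step 2: the disintegration identity.} For $W$, it suffices to show that for all cylinders and all bounded measurable functions $\psi$ of $z$,
\[
\mathbb{E}\bigl[\mathbf{1}\{W_{\le k}=w_{\le k}\}\,\psi(\Zcopy)\bigr]=\mathbb{E}\bigl[L_\infty(a_{\le k})M_\infty(b_{\le k})\,\psi(\Zcopy)\bigr].
\]
By a $\pi$–$\lambda$ argument it is enough to take $\psi=\mathbf{1}\{\Zcopy_{\le n}=(c_0,\dots,c_n)\}$, and we may take $n\ge k$ by extending the cylinder. The left side then equals a sum, over extensions $(a_{k+1},b_{k+1}),\dots,(a_n,b_n)$ in $h^{-1}(c_{k+1}),\dots,h^{-1}(c_n)$, of $\mathbb{P}(W_{\le n}=w_{\le n})$. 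By \eqref{eqn: W fdds in terms of Linfty and Minfty} this is $\sum\mathbb{E}[L_\infty(a_{\le n})M_\infty(b_{\le n})]$. I then use the factorization of the extension sum into an $a$-sum times a $b$-sum, together with the key observation that $L_\infty(a_{\le n})(z)=0$ unless $f(a_i)=z_i$ for $i\le n$. This holds because each $L_m$ with $m\ge n$ vanishes off that event. Consequently the $a$-sum, taken only over the fibres $f^{-1}(c_i)$, equals $L_\infty(a_{\le k})\mathbf{1}\{z_{\le n}=c_{\le n}\}$, by Corollary~\ref{cor:Linftyisaprob} (summing over all $a_{k+1},\dots,a_n\in A$) and Tonelli. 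The $b$-sum behaves the same way. Combining these gives the right side. The marginal statements for $\Xcopy$ and $\Ycopy$ then follow by summing out the other coordinate using Corollary~\ref{cor:Linftyisaprob}. Conditional independence is immediate, because the conditional law of $W$ is the product $\lambda_z\otimes\kappa_z$.

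\textbf{Main obstacle.} The delicate point is the support property $L_\infty(a_{\le n})(z)=0$ when $f(a_{\le n})\neq z_{\le n}$, combined with the passage from cylinder test sets in $z$ to general $\sigma(\Zcopy)$-measurable sets. The latter needs the cylinders of $C^{\mathbb{N}}$ to form a $\pi$-system generating the product $\sigma$-algebra, and it needs both sides to be finite measures in $S$, which holds by boundedness. One must also check that the use of \eqref{eqn: W fdds in terms of Linfty and Minfty} for finite paths is compatible with those paths leaving $\Delta'$: there both sides vanish, since $\varphi=0$ propagates as shown in Section~\ref{ss:altdist}, and the martingale limits must then also have zero expectation. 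This compatibility is exactly what \eqref{eqn: W fdds in terms of Linfty and Minfty} already encodes.
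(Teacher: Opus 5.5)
Your proposal is correct and follows essentially the same route as the paper. Both arguments rest on the same steps: consistency from Corollary~\ref{cor:Linftyisaprob} with extension to a measurable random measure, the fibre-support property inherited from $L_m$ vanishing off $\{f(a_i)=Z_i\}$, and the cylinder identity \eqref{eqn: W fdds in terms of Linfty and Minfty} combined with a $\pi$--$\lambda$ argument. The only difference is that the paper checks support on $h^{-1}(\Zcopy)$ and the mixture identity separately and then appeals to the disintegration theorem, whereas you combine these two facts by hand (via the extension sums over the fibres) to verify the conditional-expectation identity against cylinder functions of $\Zcopy$ directly, which is slightly more self-contained.
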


\begin{proof}
First, we give the proof for $\Xcopy$ and $L_\infty(\cdot)$; the proof for $\Ycopy$ and $M_\infty(\cdot)$ is essentially the same, and at the end we will explain a small modification that is needed for establishing the conditional distribution of $W$ given $Z$.

$L_\infty(\underline{a})$ is a function of $Z$ that is defined $\mu_Z$-a.e., for every $\underline{a} = (a_0, \dots, a_k) \in \bigcup_{i=1}^\infty A^i$ simultaneously, since $A$ is countable. Moreover, $L_\infty(\underline{a})$ is measurable with respect to the restriction of the product $\sigma$-algebra on $C^{\mathbb{N}}$ to the support of $\mu_Z$, since it is the pointwise limit of the real functions $L_m(\underline{a})$, each of which is piecewise constant on the countably many cylinder sets where it is defined. Corollary~\ref{cor:Linftyisaprob} says that  
$\mu_Z$-a.e. the random variables $L_\infty(\underline{a})$ for $\underline{a} \in \bigcup_{i=0}^\infty A^i$ form a consistent collection and therefore define a random probability measure $\nu(Z)$ on $A^\mathbb{N}$ that is a measurable function of the random variable $Z$. We must check that these measures form a family of conditional probability measures for $\Xcopy$ given $f(\Xcopy)$.

Next, we check that $\mu_Z$-a.s., $\nu(Z)$ is supported on the random fibre $f^{-1}(Z) =  \{ (a_i)_{i=0}^\infty \in A^\mathbb{N}\,:\, (\forall i) f(a_i) = Z_i\}$. In other words, $L_\infty(a_0, \dots, a_i) = 0$ on the event $Z_i \neq a_i$. This follows since $L_m(a_0, \dots, a_i) = 0$ for all $m \ge i$ on the event $Z_i \neq a_i$.

Next, we check that the random probability measure $\nu(Z)$ disintegrates the law $\mu_X$ of $X$ over its image $Z = f(X)$. That is,  $\int \nu(Z) \,d\mu_Z(Z) = \mu_X$.  By a standard Dynkin $\pi-\lambda$ argument, it suffices to check this on the generating $\pi$-system of cylinder sets, i.e.~to check that for every $k \ge 0$ and $a_0, \dots, a_k \in A$ we have
\[ \mathbb{P}(X_0 = a_0, \dots, X_k = a_k) = \mathbb{E}(L_\infty(a_0, \dots, a_k))\,,\] 
which was already established in equation~\eqref{eqn: X fdds in terms of Linfty}.

The proof for the conditional law of $W$ given $Z$ requires an extra step. Define
\[N_m((a_0,b_0), \dots, (a_k,b_k)) = L_m(a_0, \dots, a_k) M_m(b_0, \dots, b_k)\]
and
\begin{eqnarray*}
N_\infty((a_0,b_0), \dots, (a_k,b_k)) & = & L_\infty(a_0, \dots, a_k) M_\infty(b_0, \dots, b_k)\\
& = & \lim_{m \to \infty} N_m((a_0,b_0), \dots, (a_k,b_k)), \text{ $\mu_Z$-a.s.}
\end{eqnarray*} 
Define \[N_\infty((a_0,b_0), \dots, (a_k,b_k)) = L_\infty(a_0, \dots, a_k) M_\infty(b_0, \dots, b_k)\,.\] By multiplying the equations in Corollary~\ref{cor:Linftyisaprob} we find that for every $0 \le j \le k$, $(a_0, \dots, a_{j-1}) \in A^j$ and $(b_0, \dots, b_{j-1}) \in B^j$ we have almost surely
\begin{align*} \sum_{(a_j,b_j), \dots, (a_k,b_k) \in A \times B} N_\infty((a_0,&b_0), \dots, (a_k,b_k)) \\ &= \begin{cases} N_\infty((a_0,b_0), \dots, (a_{j-1},b_{j-1})) & \text{if $ j \ge 1$,}\\ 
1 & \text{if $j=0$.}\end{cases}
\end{align*}
Therefore $\mu_Z$-a.e.~$N_\infty(\cdot)$ determines a Borel probability measure  $\xi(Z)$ on $(A\times B)^\mathbb{N}$ that depends measurably on $Z$. Note that if for any $k \ge 0$ either $Z_k \neq f(a_k)$ or $Z_k \neq g(b_k)$ then $N_\infty((a_0,b_0), \dots, (a_k,b_k)) = 0$, so
$\xi(Z)$ is supported on $h^{-1}(Z) \subseteq \Delta'^{\mathbb{N}}$. Finally, equation~\eqref{eqn: W fdds in terms of Linfty and Minfty} confirms that the family $\xi(Z)$ is a disintegration of the law of $W$ along $Z = h(W)$.
\end{proof}

\section{Variants and corollaries}
\label{sec:others}
Here we prove Theorem~\ref{thm:stationary} and Corollaries~\ref{thm:DTIWMC}, \ref{cor:quasistationary}, and \ref{cor:DTHWMC (of many processes)}.

\subsection{The inhomogeneous case}
\label{ss:HtoI}
\begin{proof}[Proof of Corollary~\ref{thm:DTIWMC}]
For a sequence of stochastic matrices $\left(P_t\right)_{t \geq 1}$, we use $\MC(\upalpha, \left(P_t\right)_{t \geq 1})$ to denote the inhomogeneous Markov chain with initial distribution $\upalpha$ and successive transition matrices $P_1, P_2, \dots$.

In the setting of Corollary~\ref{thm:DTIWMC}, $\Xoriginal = \MC(\upalpha, \left(P_t\right)_{t \geq 1})$ may be represented as a time-forgetting projection of a homogeneous Markov chain $\widehat{X}$ that has state space $A \times \mathbb{N}$, where the second factor is a time co-ordinate. The initial distribution of $\widehat{X}$ is $\widehat{\upalpha}$, supported on $A \times \{0\}$ and given by \[\widehat{\upalpha}((i,0)) = \upalpha(i) \quad \text{ for every $i \in A$.}\]
The transition matrix of $\widehat{X}$ is
\[ P((i,s),(j,t)) = \delta_{s,t-1} P_t(i,j)\,.\]
Similarly we may represent $\Yoriginal$ and $\Zoriginal$ as time-forgetting projections of homogeneous Markov chains $\widehat{Y}$ and $\widehat{Z}$.
In the setting of Corollary~\ref{thm:DTIWMC} we may define maps $\widehat{f}: A \times \mathbb{N} \to C \times \mathbb{N}$ and $\widehat{g}: B \times \mathbb{N} \to C \times \mathbb{N}$ by 
\[\widehat{f}(i,t) = (f(i),t) \quad \text{ and } \quad \widehat{g}(i,t) = (g(i),t)\,.\]
Then 
 \[
    \bigl(\widehat{f}(\widehat{X}_t)\bigr)_{t \geq 0} \eqdist \bigl(\widehat{g}(\widehat{Y}_t)\bigr)_{t \geq 0} \eqdist \widehat{Z}\,.
\]  
If $\widehat{X}$ and $\widehat{Y}$ are coupled so that $W  = (\widehat{X}_t,\widehat{Y}_t)_{t \ge 0}$ is a Markov chain with state space $(A \times \mathbb{N}) \times (B \times \mathbb{N})$ that satisfies $\widehat{f}(\widehat{X}_t) = \widehat{g}(\widehat{Y}_t)$ for all $t \ge 0$, then by forgetting the (common) time coordinate, i.e. projecting to $A \times B$, we obtain a possibly inhomogeneous Markov chain $(X_t,Y_t)_{t \ge 0}$ that is a coupling of $\Xoriginal$ and $\Yoriginal$ and satisfies $f(X_t) = g(Y_t)$ for all $t \ge 0$.   
\end{proof}

\subsection{The stationary case}

The proof of Theorem~\ref{thm:stationary} is formally similar to that of Theorem~\ref{thm:DTHWMC}. The setup is a little different because now the processes $\Xoriginal = (\Xoriginal_t)_{t \in \mathbb{Z}}$ and $\Yoriginal = (\Yoriginal_t)_{t\in \mathbb{Z}}$ are two-sided. Recall that the stationary distributions $\uppi_X, \uppi_Y$ and $\uppi_Z$ are assumed (without loss of generality) to have full support on $A,B$ and $C$, so now there is no need to introduce alternative distributions as we did in Section~\ref{ss:altdist}. 

We replace the couplings $W^m$ used in the proof of Theorem~\ref{thm:DTHWMC} by a sequence $W^{-m,m}$ of couplings of the two-sided processes, as follows. Let $Z$ be a Markov chain distributed like $\Zoriginal$. For each $m \ge 0$, let $X^{-m,m}_t$ have conditional law equal to that of $\Xoriginal$ given $f(\Xoriginal_{-m}) = Z_{-m}, \dots, f(\Xoriginal_m) = Z_m$. 
For each $m \ge 0$, let $Y^{-m,m}$ be conditionally independent of $X^{-m,m}$ given $Z$, with conditional law equal to that of $\Yoriginal$ given $g(\Yoriginal_{-m}) = Z_{-m}, \dots, g(\Yoriginal_m) = Z_m$. Then let $W^{-m,m} = (X^{-m,m}_t, Y^{-m,m}_t)_{t \ge 0}$. Note that $X^{-m, m} \eqdist \Xoriginal$ and $Y^{-m, m} \eqdist \Yoriginal$, so $W^{-m,m}$ is a coupling of $X$ and $Y$.  

As explained after the statement of Theorem~\ref{thm:stationary}, we let $R^{\rev}$ and $\varphi_m^{\rev}$ be defined like $R$ and $\varphi_m$ but starting with the matrices $P_X^{\rev}$, $P_Y^{\rev}$ and $P_Z^{\rev}$ that are the transition matrices of the time reversals of $\Xoriginal$, $\Yoriginal$ and $\Zoriginal$, in place of $P_X, P_Y$ and $P_Z$. 
That is, 
\[\varphi_m^{\rev} = \bigl(R^{\rev}\bigr)^m\mathbf{1}_{\Delta}.\]
We still have the pointwise limit $\varphi = \lim_{m \to \infty} \varphi^m$, exactly as in the one-sided case, and the same applies to the time-reversed data to show that the pointwise limit $\varphi^{\rev} = \lim_{m \to \infty} \varphi_m^{\rev}$ exists. 
The matrix $P$ is defined exactly as before on $\Delta'$, by equation~\eqref{eq: coupled P def}. 

\begin{lem}
\label{lem: joint dist of W^-m,m}
    Let $k,\ell \in \mathbb{Z}$ with $k \le \ell$. Let $(a_{k},b_{k}), \dots, (a_\ell,b_\ell) \in \Delta$ with $c_i = f(a_i) = g(b_i)$ for $k \le i \le \ell$. Suppose $\mathbb{P}(Z_{k} = c_{k}, \dots, Z_\ell = c_\ell) > 0$. Then for each $m \ge \max(|k|,|\ell|)$, 
    \begin{align}
    \begin{split}
    \label{eqn: W^-m,m joint}
    \mathbb{P}&(W^{-m,m}_{k} = (a_{k},b_{k}), \dots, W^{-m,m}_\ell = (a_\ell,b_\ell)) \\  
        &   
         = \mathbb{E}\Bigl[
        \mathbb{P}(X^{-m,m}_{[k,\ell]} = (a_{k}, \dots, a_\ell) \mid Z)\, \mathbb{P}(Y^{-m,m}_{[k,\ell]} = (b_{k}, \dots, b_\ell) \mid Z)
        \Bigr] 
        \\
        & = 
        \mathbb{E}\Bigl[
        \mathbb{P}(X^{-m,m}_{[k,\ell]} = (a_{k}, \dots, a_\ell) \!\mid\!Z_{[-m,m]})\, \mathbb{P}(Y^{-m,m}_{[k,\ell]} = (b_{k}, \dots, b_\ell) \!\mid \!Z_{[-m,m]})
        \Bigr]
        \\
        & = 
        \frac{\uppi_X(a_{k})\uppi_Y(b_{k})}{\uppi_Z(c_{k})}\varphi^{\rev}_{m+k}(a_{k},b_{k})\,\varphi_{m-\ell}(a_{\ell},b_{\ell})\prod_{i= k+1}^{\ell}R((a_{i-1},b_{i-1}),(a_i,b_i)).
    \end{split}
    \end{align}
\end{lem}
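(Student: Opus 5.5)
The plan is to mirror the proof of Lemma~\ref{lem: joint dist of Wm}: expand the left-hand side as a sum over full trajectories on the window $[-m,m]$, write every conditional probability through initial distributions and transition matrices, and then recognise the sums over the pieces of trajectory lying outside $[k,\ell]$ as matrix powers. The first two equalities in~\eqref{eqn: W^-m,m joint} come straight from the construction: given $Z$, the processes $X^{-m,m}$ and $Y^{-m,m}$ are conditionally independent, and their conditional laws depend only on $Z_{[-m,m]}$. So the work is entirely in the third equality.

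We sum over $(a_i,b_i)\in\Delta$ for $i\in[-m,m]\setminus[k,\ell]$, with $c_i=f(a_i)=g(b_i)$ and $\mathbb{P}(Z_{[-m,m]}=c_{[-m,m]})>0$. Each summand is
\[
\mathbb{P}(Z_{[-m,m]}=c_{[-m,m]})\cdot\frac{\mathbb{P}(\Xoriginal_{[-m,m]}=a_{[-m,m]})}{\mathbb{P}(Z_{[-m,m]}=c_{[-m,m]})}\cdot\frac{\mathbb{P}(\Yoriginal_{[-m,m]}=b_{[-m,m]})}{\mathbb{P}(Z_{[-m,m]}=c_{[-m,m]})}.
\]
By stationarity, each path probability is expanded from the \emph{time-$k$ marginal}. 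Forward from $k$ it uses the transition matrix ($P_X$, $P_Y$, $P_Z$). Backward from $k$ it uses the time-reversal, which is legitimate since $\uppi(a_{i})P^{\rev}(a_{i},a_{i-1})=\uppi(a_{i-1})P(a_{i-1},a_{i})$. In particular,
\[
\mathbb{P}(\Xoriginal_{[-m,m]}=a_{[-m,m]})=\uppi_X(a_k)\prod_{i=-m+1}^{k}P_X^{\rev}(a_i,a_{i-1})\prod_{i=k+1}^{m}P_X(a_{i-1},a_i),
\]
and similarly for $\Yoriginal$ and $\Zoriginal$. Combining the three expansions gives a prefactor $\uppi_X(a_k)\uppi_Y(b_k)/\uppi_Z(c_k)$, multiplied by one factor of $R$ for each forward step and one factor of $R^{\rev}$ for each backward step. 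Any step with $P_Z(c_{i-1},c_i)=0$, or with $P_Z^{\rev}(c_i,c_{i-1})=0$, is excluded by the positivity constraint. These two conditions coincide, because $\uppi_Z$ has full support.

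Next we split the product into three blocks. The steps $i=k+1,\dots,\ell$ give the fixed product $\prod_{i=k+1}^{\ell}R((a_{i-1},b_{i-1}),(a_i,b_i))$. Summing the forward tail over $(a_{\ell+1},b_{\ell+1}),\dots,(a_m,b_m)\in\Delta$ gives $(R^{m-\ell}\mathbf{1}_\Delta)(a_\ell,b_\ell)=\varphi_{m-\ell}(a_\ell,b_\ell)$. Summing the backward tail over $(a_{k-1},b_{k-1}),\dots,(a_{-m},b_{-m})$ gives $((R^{\rev})^{m+k}\mathbf{1}_\Delta)(a_k,b_k)=\varphi^{\rev}_{m+k}(a_k,b_k)$. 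All terms are nonnegative, so Tonelli justifies rearranging and factorising these sums.

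The main point needing care is the bookkeeping of the zero conventions in the definitions of $R$ and $R^{\rev}$. Concretely, we must check that the index set of the summation (trajectories with $\mathbb{P}(Z_{[-m,m]}=c_{[-m,m]})>0$) is exactly the set on which every factor of $R$ and $R^{\rev}$ is given by its nonzero formula. Trajectories with a forbidden $Z$-step contribute zero on both sides. This uses the hypothesis $\mathbb{P}(Z_{[k,\ell]}=c_{[k,\ell]})>0$ together with the full support of $\uppi_Z$. The same hypothesis also guarantees that the middle $R$-factors are not artificially zeroed.
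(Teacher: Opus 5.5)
Your proposal is correct and follows the paper's proof essentially step for step. Like the paper, you expand over the trajectory pieces outside $[k,\ell]$ (the sets $\mathcal{T}_-$ and $\mathcal{T}_+$), anchor every path probability at time $k$ using $\uppi$ with forward and reversed transition matrices, and identify the two tail sums as $(R^{m-\ell}\mathbf{1}_\Delta)(a_\ell,b_\ell)$ and $((R^{\rev})^{m+k}\mathbf{1}_\Delta)(a_k,b_k)$. You also check explicitly that the positivity constraint on $Z_{[-m,m]}$ coincides with the set where $R$ and $R^{\rev}$ take their nonzero formula (via full support of $\uppi_Z$), which the paper leaves implicit in its definition of $\mathcal{T}_\pm$.
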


\begin{proof}
    Define 
    \begin{align*}
        \mathcal{T}_+ =   \bigl\{& ((a_{\ell+1},b_{\ell+1},c_{\ell+1}), \dots, (a_m,b_m,c_m)) \,:\,\prob{Z_{[\ell,m]} = (c_{\ell}, \dots, c_{m})} > 0,\\  
        &c_{\ell+1} = f(a_{\ell+1}) = g(b_{\ell+1}), \dots, c_m = f(a_m)=g(b_m) \bigr\}
    \end{align*}
    and
    \begin{align*}
        \mathcal{T}_- =   \bigl\{& ((a_{-m},b_{-m},c_{-m}), \dots, (a_{k-1},b_{k-1},c_{k-1})) \,:
        \\
        &\,\prob{Z_{[-m,k]} = (c_{-m}, \dots, c_{k})} > 0,\\ 
        &c_{-m} = f(a_{-m}) = g(b_{-m}), \dots, c_{k-1} = f(a_{k-1})=g(b_{k-1}) \bigr\}.
    \end{align*}
    
    By definition, $\mathbb{P}(W^{-m,m}_{k} = (a_{k},b_{k}), \dots, W^{-m,m}_\ell = (a_\ell,b_\ell))$ is
    \begin{align*} 
        \sum_{\mathcal{T}_+ \times \mathcal{T}_-}\mathbb{P}&(Z_{[-m,m]} = (c_{-m}, \dots, c_{m})) \\
        &\times
        \mathbb{P}(X^{-m,m}_{[-m,m]} = (a_{-m}, \dots, a_{m}) \mid Z_{[-m,m]} = (c_{-m}, \dots, c_{m}))\; 
        \\ &
        \times \mathbb{P}(Y^{-m,m}_{[-m,m]} = (b_{-m}, \dots, b_{m}) \mid Z_{[-m,m]} = (c_{-m}, \dots, c_{m}))\,.
    \end{align*}
    We rewrite this a summation over $\mathcal{T}_+$ and $\mathcal{T}_-$, and express it in terms of the initial distributions and transition matrices of $\Xoriginal, \Yoriginal$, and $\Zoriginal$. We get
    \begin{align*}
        \sum_{\mathcal{T}_+ \times \mathcal{T}_-}  
        &
        \uppi_Z(c_k)\prod_{i=k+1}^{m}P_Z(c_{i-1},c_i) \prod_{i=1-k}^m P^{\rev}_Z(c_{1-i},c_{-i})
        \\ &\times 
        \frac{\uppi_X(a_k)}{\uppi_Z(c_k)} \prod_{i=k+1}^{m} \frac{P_X(a_{i-1},a_i)}{P_Z(c_{i-1},c_i)}
         \prod_{i=1-k}^{m} \frac{P^{\rev}_X(a_{1-i},a_{-i})}{P^{\rev}_Z(c_{1-i},c_{-i})}\\
        &\times 
        \frac{\uppi_Y(a_k)}{\uppi_Z(c_k)} \prod_{i=k+1}^{m} \frac{P_Y(b_{i-1},b_i)}{P_Z(c_{i-1},c_i)} \prod_{i=1-k}^{m} \frac{P_Y^{\rev}(b_{1-i},b_{-i})}{P_Z^{\rev}(c_{1-i},c_{-i})}
        \,.
    \end{align*}
    We rearrange this by breaking each product of the form $\prod_{i=k+1}^m$ into two products, of the form $\prod_{i=k+1}^{\ell}$ and $\prod_{i={\ell+1}}^m$. We can collect factors into terms involving $R$ or $R^{\rev}$ and we recognize that the summations over $\mathcal{T}_+$ and $\mathcal{T}_-$ perform two matrix products, so we end up with the final expression in~\eqref{eqn: W^-m,m joint}.
    \end{proof}
    In particular, when $k=\ell$, we get
    \begin{align}
    \label{eqn: W^-m,m init}
     \mathbb{P}(W^{-m,m}_k = (a_k,b_k))   \notag
         &= 
        \mathbb{E}[\mathbb{P}(X^{-m,m}_k = a_k \mid Z)\, \mathbb{P}(Y^{-m,m}_k = b_k \mid Z)]\\
         &= \frac{\uppi_X(a_k)\uppi_Y(b_k)}{\uppi_Z(c_k)}\,\varphi_{m-k}(a_k,b_k)\,\varphi^{\rev}_{m+k}(a_k,b_k).
    \end{align}

\begin{proof}[Proof of Theorem~\ref{thm:stationary}]
The proof of Theorem~\ref{thm:stationary} follows from Lemma~\ref{lem: joint dist of W^-m,m} in exactly the same way that the proof of Theorem~\ref{thm:DTHWMC} follows from Lemma~\ref{lem: joint dist of Wm}, using the modified martingales
\[L_{-m,m}^{[k,\ell]}(a_{k}, \dots, a_\ell)   =  \mathbb{P}(X^{-m,m}_{k} = a_{k}, \dots, X^{-m,m}_\ell = a_\ell \mid Z_{-m}, \dots Z_m)\] 
and
\[M_{-m,m}^{[k,\ell]}(b_{k}, \dots, b_\ell)   =  \mathbb{P}(Y^{-m,m}_{k} = b_{k}, \dots, Y^{-m,m}_\ell = b_\ell \mid Z_{-m}, \dots Z_m)\] in place of $L_m(\cdot)$ and $M_m(\cdot)$. 
We have
\[ \mathbb{P}(W_k^{-m,m} = (a,b)) = \mathbb{E}\bigl[L_{-m,m}^{[k,k]}(a)M_{-m,m}^{[k,k]}(b)\bigr]\,.\]
By~\eqref{eqn: W^-m,m init}, for each $(a,b) \in \Delta$ with $f(a) = g(b) = c$, as $m \to \infty$ we have
\[ \mathbb{P}(W_k^{-m,m} = (a,b)) \to \frac{\uppi_X(a)\uppi_Y(b)}{\uppi(c)} \varphi(a,b)\varphi^{\rev}(a,b) =: \uppi(a,b).\]
We claim that $\uppi$ is a probability measure supported on the set
\[\Delta'' = \{(a,b) \in \Delta(f,g) \,:\, \varphi(a,b) > 0, \varphi^{\rev}(a,b) >0\}.\]
This may be proved by an argument analogous to the proof of Lemma~\ref{lem: pi is a coupling}.   

We may take the limit as $m \to \infty$ in~\eqref{eqn: W^-m,m joint}, obtaining
\begin{multline} 
\mathbb{P}(W^{-m,m}_{k} = (a_{k},b_{k}), \dots, W^{-m,m}_\ell = (a_\ell,b_\ell))\\  \to \frac{\uppi_X(a_{k})\uppi_Y(b_{k})}{\uppi_Z(c_{k})}\varphi^{\rev}(a_{k},b_{k})\,\varphi(a_{\ell},b_{\ell})\prod_{i= k+1}^{\ell}R((a_{i-1},b_{i-1}),(a_i,b_i)).
\label{eq:W-mmDistLimit}
\end{multline}
Recall that if $\varphi(a,b) = 0$ and $R((a,b),(a',b')) > 0$ then $\varphi(a',b') = 0$. Hence if the right-hand side of~\eqref{eq:W-mmDistLimit} is strictly positive then $\varphi(a_i,b_i) > 0$ for all $k \le i \le \ell$ and hence~\eqref{eq:W-mmDistLimit} may be rewritten as
\begin{multline} 
\mathbb{P}(W^{-m,m}_{k} = (a_{k},b_{k}), \dots, W^{-m,m}_\ell = (a_\ell,b_\ell))\\  \to \frac{\uppi_X(a_{k})\uppi_Y(b_{k})}{\uppi_Z(c_{k})}\varphi^{\rev}(a_{k},b_{k})\,\varphi(a_{k},b_{k})\prod_{i= k+1}^{\ell}P((a_{i-1},b_{i-1}),(a_i,b_i))\\
 = \uppi(a_k,b_k) \prod_{i=k+1}^\ell P((a_{i-1},b_{i-1}),(a_i,b_i)).
\label{eq:W-mmDistLimitw}
\end{multline}
On the other hand, if any $(a_i,b_i)$ does not lie in $\Delta'$ then 
\[\mathbb{P}(W^{-m,m}_{k} = (a_{k},b_{k}), \dots, W^{-m,m}_\ell = (a_\ell,b_\ell)) \to 0.\]
Comparing the case $k=0, \ell = 1$ with the case $k=1,\ell=1$, we have
\[\mathbb{P}(W^{-m,m}_1 = (a_1,b_1)) = \sum_{(a_0,b_0) \in \Delta} \mathbb{P}\bigl(W^{-m,m}_0 = (a_0,b_0), W^{-m,m}_1 = (a_1,b_1)\bigr) .  \]
Taking the limit as $m \to \infty$ and using Fatou's lemma, we find that for every $(a',b') \in \Delta'$, we have
\[ \uppi(a_1,b_1) \ge  \sum_{(a_0,b_0) \in \Delta'} \uppi(a_0,b_0)\, P((a_0,b_0),(a_1,b_1)) = (\uppi P)(a_1,b_1).
 \]
 Since $\uppi \in \PM(\Delta')$ and $P \in \SM(\Delta')$, we have $\uppi P \in \PM(\Delta')$ so the above inequality can only hold by being equality everywhere.  Hence $\uppi P = \uppi$. In particular, $P$ preserves the support of $\varphi^{\rev}$, so we may restrict $P$ to $\Delta''$ to obtain a stochastic matrix.

We have now shown that $W^{-m,m}$ converges in its finite-dimensional distributions to a stationary two-sided Markov chain $W = MC(\uppi,P)$ taking values in $\Delta''$, whose finite-dimensional distributions of $W$ are given by
\begin{equation}
\mathbb{P}(W_{k} = (a_{k},b_{k}), \dots, W_{\ell} = (a_\ell,b_\ell)) = \uppi(a_k,b_k) \prod_{i=k+1}^\ell P((a_{i-1},b_{i-1}),(a_i,b_i)).  \label{eq:fddstat}
\end{equation}

The proof that $W$ is a coupling of $\Xoriginal$ and $\Yoriginal$ and the proof of the conditional independence of $\Xcopy$ and $\Ycopy$ given $\Zcopy$ are similar to those in the proof of Theorem~\ref{thm:DTHWMC}.

We remark that by applying the same proof to the time-reversed chains $\MC(\uppi_X,P_X^{\rev})$ and $\MC(\uppi_Y,P_Y^{\rev})$, we obtain a stochastic matrix $P^{\rev}$ also defined on $\Delta''$ 
by
\[P^{\rev}((a,b),(a',b')) = \frac{1}{\varphi^{\rev}(a,b)}R^{\rev}((a,b),(a',b')) \,\varphi^{\rev}(a',b')\,.\]
Let us confirm algebraically that $P^{rev}$ is the time-reversal of $P$, which means that the following detailed balance condition holds for all $(a,b),(a',b') \in \Delta''$.  
\begin{equation}
\label{eq:detailedbalancestat}
\uppi(a,b)\,P((a,b),(a',b')) = \uppi(a',b')\,P^{\rev}((a',b'),(a,b)).
\end{equation}
Letting $c = f(a) = g(b)$ and $c' = f(a') = g(b')$ and expanding each side, we must check
\begin{multline*}\frac{\uppi_X(a) \uppi_Y(b)}{\uppi_Z(c)}\,\frac{P_X(a,a')P_Y(b,b')}{P_Z(c,c')} \,\varphi(a',b')\varphi^{\rev}(a,b) \\
=  \frac{\uppi_X(a')\uppi_Y(b')}{\uppi_Z(c')}\,\frac{P^{\rev}_X(a',a)P^{\rev}_Y(b',b)}{P^{\rev}_Z(c',c)}\,\varphi(a',b')\,\varphi^{\rev}(a,b).\end{multline*}
Here the factors $\varphi^{\rev}(a,b)$ and $\varphi(a',b')$ are equal on both sides; if either is zero then we are done, and otherwise we may cancel them. Then~\eqref{eq:detailedbalancestat} follows from the detailed balance equations relating $\Xoriginal$, $\Yoriginal$ and $\Zoriginal$ to their time-reversals, i.e.
\[\uppi_X(a)P_X(a,a') = \uppi_X(a')P^{\rev}_X(a',a),\]
\[\uppi_Y(b)P_Y(b,b') = \uppi_Y(b')P^{\rev}_Y(b',b),\]
\[\uppi_Z(c)P_Z(c,c') = \uppi_Z(c')P^{\rev}_Z(c',c).\qedhere\]
\end{proof}

\subsection{The quasistationary case}\label{ss:quasistationary}
\begin{proof}[Proof of Corollary~\ref{cor:quasistationary}]
Let $\upalpha_X$, $\upalpha_Y$ and $\upalpha_Z$ be the initial quasistationary distributions of $\Xoriginal$, $\Yoriginal$ and $\Zoriginal$. This means that 
\[ \upalpha_X P_X = (1-\lambda) \upalpha_X + \lambda \delta_{\rho_A}\,,\]
\[ \upalpha_Y P_Y = (1-\lambda) \upalpha_Y + \lambda \delta_{\rho_B}\,,\]
\[ \upalpha_Z P_Z = (1-\lambda) \upalpha_Z + \lambda \delta_{\rho_C}\,.\]
Let us define modified transition matrices $\hat{P}_X$, $\hat{P}_Y$ and $\hat{P}_Z$, by modifying only the rows corresponding to the states $\rho_A$, $\rho_B$ and $\rho_C$, so that instead of being absorbing states, these become states from which the chain jumps to a state distributed according to the quasistationary distribution.  That is,
\[ \hat{P}_X(x,y) = \begin{cases} P_X(x,y) & \text{if $x \neq \rho_A$,}\\ \upalpha_X(y) & \text{if $x = \rho_A$,}\end{cases}\]
and similarly for $\hat{P}_Y$ and $\hat{P}_Z$.
 Then $\hat{P}_X$ is irreducible and has unique stationary distribution $\uppi_X$, given by \[\uppi_X = \frac{\lambda}{1+\lambda} \delta_{\rho_A} + \frac{1}{1+\lambda} \upalpha_X\,.\]
 Analogous formulae define $\uppi_Y$ and $\uppi_Z$, the unique stationary distributions of $\hat{P}_Y$ and $\hat{P}_Z$. Moreover, the stationary Markov chain $\MC(\uppi_X,\hat{P}_X)$ lumps weakly under $f$ to $\MC(\uppi_Z,\hat{P}_Z)$, and similarly $\MC(\uppi_Y,\hat{P}_Y)$ lumps weakly under $g$ to $\MC(\uppi_Z,\hat{P}_Z)$. Apply Theorem~\ref{thm:stationary} to obtain a stationary Markov chain $\hat{W} = \MC(\uppi,\hat{P}) = (\hat{X}_t,\hat{Y}_t)_{t \ge 0}$ that couples the modified chains, such that $f(\hat{X}_t) = g(\hat{Y}_t)$ for all $t \ge 0$, and such that the marginal chains $\hat{X}$ and $\hat{Y}$ are conditionally independent given $(f(\hat{X}_t))_{t \ge 0}$. 
 The state space $\Delta'$ of $\hat{W}$ includes the state $(\rho_A,\rho_B)$ since this is the only state of $\Delta(f,g)$ that is a lift of $\rho_C$. Modify $\hat{P}$ to make the state $(\rho_A, \rho_B)$ absorbing, obtaining a transition matrix $P$, and let $\upalpha$ be the normalization of the restriction of $\uppi$ to $\Delta' \setminus \{(\rho_A,\rho_B)\}$.
 Then $W = (X_t,Y_t)_{t \ge 0} = \MC(\upalpha,P)$ has the required properties.  Using the fact that the excursions of $\hat{W}$ from $(\rho_A,\rho_B)$ are independent, one can check that conditional on the trajectory of $f(X_t)$ up to the absorption time, the trajectories of $X$ and $Y$ are independent.   
\end{proof}

\subsection{Coupling multiple chains}\label{ss:proof of cor many}

\begin{proof}[Proof of Corollary~\ref{cor:DTHWMC (of many processes)}]
    The proof is straightforward using induction over $k$. The base case $k=1$ is trivial. For the induction step, suppose the result holds for $k-1$, so that we have a homogeneous Markov chain coupling $(X^{(1)}_t, \dots, X^{(k-1)}_t)_{t \ge 0}$ of $\Xoriginal^{(1)}, \dots, \Xoriginal^{(k-1)}$ such that \[f_1(X^{(1)}_t)=\dots = f_{k-1}(X^{(k-1)}_t) \text{ for all $t\ge0$,}\] and such that the marginal processes $X^{(1)}, \dots, X^{(k-1)}$ are conditionally independent given $f_1(X^{(1)})$. Then we may use Theorem~\ref{thm:DTHWMC} to construct a WMC of the two chains $(X^{(1)},\dots,{X}^{(k-1)})$ and $\Xoriginal^{(k)}$ such that 
    \[  
        f_1(X^{(1)}_t) = f_k(X^{(k)}_t) \text{ for all $t\ge0$}
    \]
    \enlargethispage*{1cm}
    and such that $X^{(1)}, \dots, X^{(k)}$ are conditionally independent given $f_1(X^{(1)})$. This completes the induction step.
\end{proof}

%
%

\section{Examples}
\label{sec:examples}

We give several examples which serve to illustrate key points.  In Example~\ref{ex:nonMarkovZnocoupling}, Markov chains $\Xoriginal$ and $\Yoriginal$ share an image process $\Zoriginal$ that is not itself Markov, and we show that they have no weak Markovian coupling taking values in $\Delta(f,g)$. 
Example~\ref{ex:stationarymarginals} illustrates that when $\Xoriginal$ and $\Yoriginal$ are stationary, the coupling constructed in our proof of Theorem~\ref{thm:DTHWMC} need not be stationary. 
Example~\ref{ex: 3 states each affirmative} is a very simple case where there exists an exact Markovian coupling but no strong Markovian coupling.
In Example~\ref{ex:biasedRW}, we discuss an application of Theorem~\ref{thm:DTHWMC} for countably infinite state spaces, coupling two biased random walks on $\mathbb{Z}$ to have equal absolute values and moreover to be conditionally independent given the entire trajectory of the common absolute value.

\begin{example}[Two Markov chains with a non-Markov image process]
\label{ex:nonMarkovZnocoupling}
    Here we show that the hypothesis in Theorem~\ref{thm:DTHWMC} that the image process $Z$ is Markov cannot be dropped.
    This example is based on an example given by Benjamini, Kozma, Lov\'asz, Romik and Tardos  \cite{benjamini2006waiting} of two different rooted trees which have equal return time distributions for a simple symmetric random walk to return to the root. We quote from their paper:
    
       \textit{
        A spelunker has an accident in the cave. Their lamp goes out, they cannot move, all they can hear is a bat flying by every now and then on its random flight around the cave. What can they learn about the shape of the cave?     
        In other words: What can we learn about the structure of a finite graph using only information obtained by observing the returns of a random walk on the graph to this node?}
        
    The authors show that in general it is not possible to determine the structure of the rooted graph from the return time distribution. (They also show that the return time distribution determines the spectrum of the graph Laplacian but not the multiplicities of the eigenvalues.) They construct pairs of rooted graphs that are not isomorphic but have the same distribution of the return time of simple symmetric random walk to the root, one example of which is the pair of trees $T_1$ and $T_2$ shown in Figure~\ref{fig:Benjamini_trees}.

    \begin{figure}
    \centering
    
    \begin{tikzpicture}
    
        \begin{scope}[every node/.style={circle,draw,inner sep=2pt}]
    
            \node (A) at (1,2) {} ;
            \node (B) at (2,2) {} ;
            \node (C) at (3,2) {} ;
            \node (D) at (0,1) {} ;
            \node (E) at (1,1) {} ;
            \node [fill=black!40] (F) at (2,1) {} ;
            \node (G) at (3,1) {} ;
            \node (H) at (4,1) {} ;
            \node (I) at (1,0) {} ;
            \node (K) at (3,0) {} ;
            
            \node (A') at (5,0)  {} ;
            \node (B') at (6,0)  {} ;
            \node (C') at (7,0)  {} ;
            \node[fill=black!40] (D') at (8,0)  {} ;
            \node (E') at (9,0)  {} ;
            \node (F') at (10,0) {} ;
            \node (G') at (11,0) {} ;
            \node (H') at (8,1)  {} ;
            \node (I') at (8,2)  {} ;
            \node (J') at (8,3)  {} ;
            
        \end{scope}
        
        \begin{scope}[>={Stealth}, every edge/.style={draw=black,thick}]
        
            \path (A) edge (E);
            \path (B) edge (F);
            \path (C) edge (G);
            
            \path (D) edge (E);
            \path (E) edge (F);
            \path (F) edge (G);
            \path (G) edge (H);
    
            \path (I) edge (E);
          
            \path (K) edge (G);
    
    
            \path (A') edge (B');
            \path (B') edge (C');
            \path (C') edge (D');
            \path (D') edge (E');
            \path (E') edge (F');
            \path (F') edge (G');
            
            \path (D') edge (H');
            \path (H') edge (I');
            \path (I') edge (J');

        \end{scope}
    
        \begin{scope}
            \node at (2,-1) {$T_1$};
            \node at (8,-1) {$T_2$};
        \end{scope}
    
    \end{tikzpicture}\\
    
    \caption{Bat-cave example from \cite{benjamini2006waiting}.}
    \label{fig:Benjamini_trees}
    \end{figure}
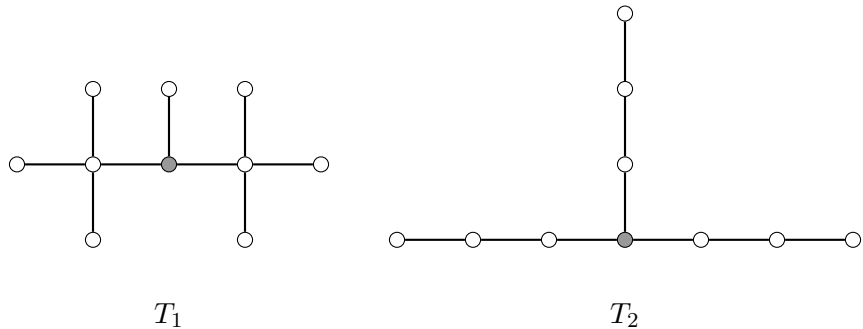

    We can use the symmetries of the rooted trees illustrated in Figure~\ref{fig:Benjamini_trees} to produce two smaller Markov chains with the same property. That is, they have equal distributions of the return time to the marked state. They are illustrated in Figure~\ref{fig: reduced bats}.
    
    Let $A = B = \{1,2,3,4\}$ and $C = \{0,1\}$. Define $f:A\to C$ and $g:B\to C$ by 
    $$f(2)=0, \quad g(1)=0,$$
    and $f(i)=g(j)=1$ in all other cases.    
    Let 
    \[
        P_X = \left(
        \begin{array}{cccc} 
            0 & 1 & 0 & 0\\
            1/3 & 0 & 2/3 & 0\\
            0 & 1/4 & 0 & 3/4\\
            0 & 0 & 1 & 0
        \end{array} \right)
        \quad\text{ and }\quad
        P_Y = \left(
        \begin{array}{cccc} 
            0 & 1 & 0 & 0\\
            1/2 & 0 & 1/2 & 0\\
            0 & 1/2 & 0 & 1/2\\
            0 & 0 & 1 & 0
        \end{array} \right)
        \,.
    \]
    The marked states in Figure~\ref{fig: reduced bats} are the states which map to $0 \in C$.
    
    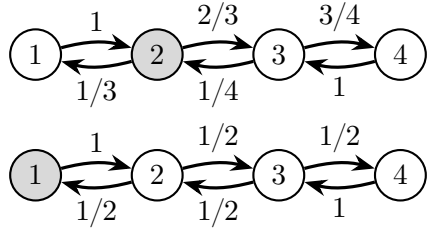
\begin{figure}
    \centering
    
    \begin{tikzpicture}[scale=0.8]
    
        \begin{scope}[every node/.style={circle,thick,draw,minimum size=0.1cm}]
    
            \node (A) at (0,2) {1} ;
            \node[fill=gray!30] (B) at (2,2) {2} ;
            \node (C) at (4,2) {3} ;
            \node (D) at (6,2) {4} ;
            
            \node[fill=gray!30] (A')  at (0,0) {1} ;
            \node (B')  at (2,0) {2} ;
            \node (C')  at (4,0) {3} ;
            \node (D')  at (6,0) {4} ;
            
        \end{scope}
        
        \begin{scope}[>={Stealth}, every edge/.style={draw=black,very thick}]
        
            \path [->] (A) edge[bend left=15] node[above] {$1$}   (B);
            \path [->] (B) edge[bend left=15] node[above] {$2/3$} (C);
            \path [->] (C) edge[bend left=15] node[above] {$3/4$} (D);
            \path [->] (D) edge[bend left=15] node[below] {$1$}   (C);
            \path [->] (C) edge[bend left=15] node[below] {$1/4$} (B);
            \path [->] (B) edge[bend left=15] node[below] {$1/3$} (A);
    
            \path [->] (A') edge[bend left=15] node[above] {$1$} (B');
            \path [->] (B') edge[bend left=15] node[above] {$1/2$} (C');
            \path [->] (C') edge[bend left=15] node[above] {$1/2$} (D');
            \path [->] (D') edge[bend left=15] node[below] {$1$}   (C');
            \path [->] (C') edge[bend left=15] node[below] {$1/2$} (B');
            \path [->] (B') edge[bend left=15] node[below] {$1/2$} (A');
            
        \end{scope}
    
    \end{tikzpicture}
    
    \caption{Chains $\bar{X}$ (above) and $\bar{Y}$ (below) in Example~\ref{ex:nonMarkovZnocoupling}.}
    \label{fig: reduced bats}
    \end{figure}
    
    Let $\upalpha_X = (0,1,0,0)$ and $\upalpha_Y = (1,0,0,0)$, so that $f(\Xoriginal_0) = g(\Yoriginal_0) = 0$, and let $\Xoriginal = \MC(\upalpha_X, P_X)$ and $\Yoriginal = \MC(\upalpha_Y, P_Y)$. We have
 
     \[f(\Xoriginal_0) = f(2) = 0 = g(1) = g(\Yoriginal_0)\]

    To check that $f(\Xoriginal) \eqdist g(\Yoriginal)$, it is enough to compute the distribution of the time $\tau_2^{\Xoriginal}$ of the first return to $2$ of $\Xoriginal$ and the time $\tau_1^{\Yoriginal}$ of the first return to $1$ of $\Yoriginal$, and see that they agree. 
    In both cases the return times are always even, and for each $k \ge 1$ we have
    \[
        \mathbb{P}(\tau_2^{\Xoriginal} = 2k) = \mathbb{P}(\tau_1^{\Yoriginal} = 2k) = \begin{cases} \frac{1}{2} & \text{ if $k=1$, }\\ \frac{2}{9} \bigl(\frac{3}{4}\bigr)^{k} & \text{if $k \ge 2$.} \end{cases}
    \]
    Note that the process $f(\Xoriginal)$ does not have the Markov property, since
    \[
        \mathbb{P}(f(\Xoriginal_4) = 0 \mid f(\Xoriginal_3) = 1) = \mathbb{P}(f(\Xoriginal_4) = 0) = \mathbb{P}(\Xoriginal_4 = 2) = 3/8
    \]
    is not equal to
    \[
        \mathbb{P}(f(\Xoriginal_4) = 0 \mid f(\Xoriginal_3) = 1, f(\Xoriginal_2) = 1) = \mathbb{P}(\Xoriginal_4 = 2 \mid \Xoriginal_2 = 4) = 1/4\,.
    \]

    We now show that there does not exist a  Markov chain, even an inhomogeneous one, $(W_t)_{t \ge 0} = (\Xcopy_t,\Ycopy_t)_{t \ge 0}$ on the state space $\Delta(f,g)$, such that $ \Xcopy \eqdist \Xoriginal$ and $ \Ycopy \eqdist \Yoriginal$.    
    Suppose (for a contradiction) that such a chain exists.  We must have $W_0 = (2,1)$. Because the chains $\Xoriginal$ and $\Yoriginal$ have period $2$, the only possible states that $W$ could visit in $\Delta(f,g)$ are the five states 
    \[
        (2,1), (1,2), (3,2), (3,4), (4,3).
    \]
    The possible transitions of $W$ form a birth-death chain, some of whose transition probabilities are forced by the marginal constraints. These constraints are illustrated in Figure~\ref{fig: possible W}. The graph is bipartite so the transition probabilities out of states whose first coordinate is even are defined only for even times $t$, and these are completely determined by the marginal constraints. The transition probabilities out of states whose first coordinate is odd are defined only for odd $t$.

\usetikzlibrary {shapes.geometric}
    \begin{figure}
    \centering
    \begin{tikzpicture}
    
        \begin{scope}[every node/.style={ellipse,thick,draw,inner sep=0.07cm}]
    
            \node (A) at (0,2) {(1,2)} ;
            \node[fill=gray!30] (B) at (2,2) {(2,1)} ;
            \node (C) at (4,2) {(3,2)} ;
            \node (D) at (6,2) {(4,3)} ;
            \node (E) at (8,2) {(3,4)} ;                
        \end{scope}
        
        \begin{scope}[>={Stealth}, every edge/.style={draw=black,very thick}]
        
            \path [->] (A) edge[bend left=15] node[above] {$1$}   (B);
            \path [->] (B) edge[bend left=15] node[above] {$2/3$} (C);
            \path [->] (C) edge[bend left=15] node[above] {$p_t$} (D);
            \path [->] (D) edge[bend left=15] node[below] {$1/2$}   (C);
            \path [->] (E) edge[bend left=15] node[below] {$1$}   (D);
            \path [->] (C) edge[bend left=15] node[below] {$1-p_t$} (B);
            \path [->] (B) edge[bend left=15] node[below] {$1/3$} (A);
            \path [->] (D) edge[bend left=15] node[above] {$1/2$}   (E);
           
        \end{scope}
    
    \end{tikzpicture}
    \caption{Transition probabilities of $W$ from time $t$ to time $t+1$ in Example~\ref{ex:nonMarkovZnocoupling}}
    \label{fig: possible W}
    \end{figure}
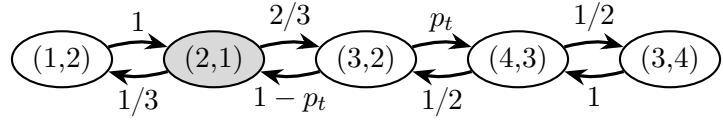
    For every odd $t \ge 1$, $\mathbb{P}(X_{t-1} = 2, X_{t} = 3) > 0$, so $\mathbb{P}(W_{t} = (3,2)) > 0$, and hence we may define the time-dependent transition probability
    \[
        p_t = \mathbb{P}(W_{t+1} = (4,3) \mid W_t = (3,2)).
    \] 
    Note that
    \[
        p_1 = \mathbb{P}(X_2 = 4 \mid X_1 = 3) = \tfrac{3}{4}.
    \]
    We will calculate $p_3$ in two ways and get distinct answers. Using $P_X$, 
    \[
        \mathbb{P}(\Xcopy_4 = 4 \mid \Xcopy_2 = 2) = \tfrac{1}{2},
    \]
    and using $W$ we get 
    \[
        \mathbb{P}(\Xcopy_4  = 4 \mid \Xcopy_2 = 2) = \mathbb{P}(W_4 = (4,3) \mid W_2 = (2,1)) =  \tfrac{2}{3}\, p_3.
    \] 
    Hence $p_3 = 3/4$.

    On the other hand, we must have $\mathbb{P}(\Xoriginal_4 = 4) = \mathbb{P}(W_4 = (4,3))$. 
    Considering $W$ we get 
    \[ 
        \mathbb{P}(W_4 = (4,3)) = \tfrac{7}{12}\, p_3   + \tfrac{1}{4} 
    \]
    and using $P_X$ we get
    \[
        \mathbb{P}(X_4 = 4) =   \tfrac{5}{8}.
    \] 
    Hence $p_3 = 11/14$. 
    This is the desired contradiction. We have shown that there is no coupling of $\Xoriginal$ and $\Yoriginal$ that takes its values in $\Delta$ and has the Markov property.

\end{example}

\begin{example}[Coupling two stationary chains]
\label{ex:stationarymarginals} This example demonstrates why Theorem~\ref{thm:stationary} is needed, by showing that the coupling provided by Theorem~\ref{thm:DTHWMC} need not be stationary even when the given chains $\Xoriginal, \Yoriginal$, and $\Zoriginal$ are stationary. 
Let $A = \{0,1\}^3 = B$, with $C = \{0,1\}$ and let $f = g$ be defined by $f(a,b,c) = b$. Let $\Xoriginal$ be the homogeneous Markov chain where the three coordinates of $\Xoriginal_0$ are independent Bernoulli($1/2$) random variables, and 
\[
    \mathbb{P}(\Xoriginal_1 = (b,c,d) \mid \Xoriginal_0 = (a,b,c)) = \tfrac{1}{2},\;\;\text{ for  $d = 0,1$.}
\]
Let $\Yoriginal$ have the same distribution as $\Xoriginal$. Then each of $f(\Xoriginal)$ and $g(\Yoriginal)$ is an i.i.d.~sequence of Bernoulli($1/2$) random variables.  Applying the construction of Theorem~\ref{thm:DTHWMC}, we obtain 
\[\Delta = \bigl\{((a,b,c),(a',b',c')) \in A \times B\,:\,b = b'\bigr\}.\]
In this example, $R^m\mathbf{1}_\Delta$  is constant for all $m \ge 1$, and we obtain
\[
    \varphi((a,b,c),(a',b,c')) = \begin{cases} 2, & \text{ if $c = c'$,}\\ 0 & \text{ if $c \neq c'$.} \end{cases}
\]
This gives a coupling whose state space $\Delta'$ is a proper subset of $\Delta$:
\[
    \Delta' = \bigl\{ ((a,b,c),(a',b,c)): a,b,c,a' \in \{0,1\}\bigr\}
\]
The transition matrix $P$ of the coupled process is given by
\[
    P\bigl(((a,b,c),(a',b,c)),((b,c,d),(b,c,d))\bigr) = \tfrac12 \quad \text{for $d=0,1$,} 
\]
with all other entries $0$.
Since $(X_0,Y_0)$ is uniformly distributed on $\Delta'$, 
\[
    \mathbb{P}(\Xcopy_0 = \Ycopy_0) = \tfrac{1}{2}
\]
but for each $t \ge 1$,  
\[
    \mathbb{P}(\Xcopy_t  = \Ycopy_t) = 1 \,.
\]
So the coupled process $(\Xcopy_t,\Ycopy_t)_{t \ge 0}$ is neither stationary nor irreducible. If we apply Theorem~\ref{thm:stationary} instead, we get a strictly smaller state space $\Delta''$ since
\[
    \varphi^\rev((a,b,c),(a',b,c')) = 
    \begin{cases} 
        2, & \text{ if $a = a'$,} \\ 
        0, & \text{ if $a \neq a'$.}
    \end{cases}
\]
This leads to the trivial coupling in which $\Xcopy_n = \Ycopy_n$ for all $n$. 

\end{example}

\begin{example}[Exact Markovian coupling but no strong Markovian coupling]
\label{ex: 3 states each affirmative}
Let $A=\{0,1,2\}$, $B=\{0',1',2'\}$, and $C=\{0,1\}$ and define the maps $f(x)=\ind{x\neq0}$ and $g(y)=\ind{y\neq0'}$ from $A$ and $B$ respectively to $C$. Define the following stochastic matrices, with rows and columns indexed by $A$, $B$, and $C$, respectively:
\begin{align*}
    P_X
    =
    &
    \begin{pmatrix}
        0 & 1/2 & 1/2 \\
        0 & 1/2 & 1/2 \\
        1 & 0 & 0
    \end{pmatrix},
    &
    P_Y
    =
    &
    \begin{pmatrix}
        0 & 1/3 & 2/3 \\
        0 & 0 & 1 \\
        3/4 & 1/4 & 0
    \end{pmatrix},
    & 
    P_Z
    =
    \begin{pmatrix}
        0 & 1 \\
        1/2 & 1/2
    \end{pmatrix}.
\end{align*}
These chains and maps are illustrated in Figure~\ref{Fig: WMC example}.
Their unique stationary probability distributions are:
\begin{align*}
    \uppi_X&=(1/3,1/3,1/3),
    & 
    \uppi_Y&=(1/3,2/9,4/9),
    & 
    \uppi_Z&=(1/3,2/3). 
\end{align*}
Let $X = \MC(\uppi_X,P_X)$, $Y = \MC(\uppi_Y, P_Y)$, and $Z = \MC(\uppi_Z,P_Z)$.
It is easy to check that $X$ lumps exactly under $f$ with respect to the family
\[ 
    \nu^f_0 = (1,0,0), \quad \nu^f_1 = (0,1/2,1/2).
\]
Likewise, $Y$ lumps exactly under $g$ with respect to the family
\[
    \nu^g_0= (1,0,0), \quad \nu^g_1 = (0, 1/3,2/3).
\]
However, $f$ and $g$ are not strong lumpings of $P_X$ and $P_Y$ to $P_Z$. 

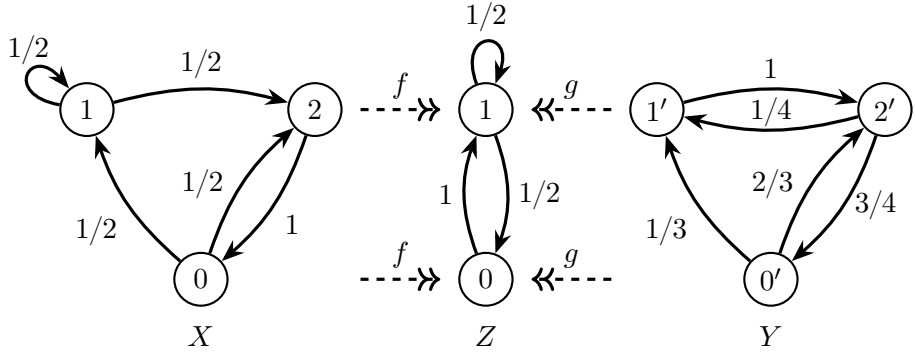
\begin{figure}
\label{fig:323}
\centering
{
\begin{tikzpicture}[scale=0.75]

    \begin{scope}[every node/.style={circle,thick,draw,minimum size=0.7cm,inner sep=0cm}]
    
        \node (A)  at (2,0)  {$0$};
        \node (B)  at (0,3)  {$1$};
        \node (C)  at (4,3)  {$2$};

        \node (1)  at (7,3)  {$1$};
        \node (0)  at (7,0)  {$0$};

        \node (A') at (12,0) {$0'$};
        \node (B') at (10,3) {$1'$};
        \node (C') at (14,3) {$2'$};
        
    \end{scope}
    
    \begin{scope}[>={Stealth}, every edge/.style={draw=black,very thick}]
    
        \path [->] (A) edge[bend left=15] node[below left] {$1/2$} (B);
        \path [->] (B) edge[out=170,in=130,looseness=8] node[above] {$1/2$} (B);
        \path [->] (B) edge[bend left=15] node[above] {$1/2$} (C);
        \path [->] (C) edge[bend left=15] node[below right] {$1$} (A);
        \path [->] (A) edge[bend left=15] node[ left=1mm] {$1/2$} (C);

        \path [->] (0) edge[bend left=20] node[left] {$1$} (1);
        \path [->] (1) edge[out=110,in=70,looseness=8] node[above] {$1/2$} (1);
        \path [->] (1) edge[bend left=20] node[right] {$1/2$} (0);

        \path [->] (A') edge[bend left=15] node[below left] {$1/3$} (B');
        \path [->] (B') edge[bend left=15] node[above] {$1$} (C');
        \path [->] (C') edge[bend left=15] node[ right] {$3/4$} (A');
        \path [->] (A') edge[bend left=15] node[ left=1mm] {$2/3$} (C');
        \path [->] (C') edge[bend left=15] node[above=-1mm] {$1/4$} (B');
        
    \end{scope}

    \begin{scope}[every edge/.style={draw=black,very thick, dashed}]
        \path [->>] (4.8,0) edge node[above] {$f$} (6.2,0);
        \path [->>] (4.8,3) edge node[above] {$f$} (6.2,3);
        \path [->>] (9.2,0) edge node[above] {$g$} (7.8,0);
        \path [->>] (9.2,3) edge node[above] {$g$} (7.8,3);
    \end{scope}

    \begin{scope}
        \node (X) at (2,-1)  {$X$};
        \node (Y) at (12,-1) {$Y$};
        \node (Z) at (7,-1)  {$Z$};
    \end{scope}

\end{tikzpicture}
}
\caption{The Markov chains and maps in Example~\ref{ex: 3 states each affirmative}.}
\label{Fig: WMC example}
\end{figure}

Any WMC of $X$ and $Y$ taking values in $\Delta$ cannot use the state $(2,1')$, because from $2$ the first marginal must step to $0$, while from $1'$ the second marginal must step to $2'$, but $(0,2') \notin \Delta$. Hence there can be no SMC  of $P_X$ and $P_Y$ that preserves $\Delta(f,g)$. 

The unique coupled probability distribution $\uppi$ supported on $\Delta \setminus \{(2,1') \}$ with the properties that $\proj^1_*(\uppi)=\uppi_X$ and $\proj^2_*(\uppi)=\uppi_Y$ is given in the following table.
\begin{center}
\begin{tabular}{c|ccccc}
$(a,b)$ & $(0,0')$ & $(1,1')$ & $(1,2')$ & $(2,1')$ &$(2,2')$\\
\hline
$\uppi(a,b)$ & $1/3$ & $2/9$ & $1/9$ & 0 & $1/3$
\end{tabular}
\end{center}
This allows us to calculate the values of $\varphi$, using 
\[
    \varphi(a,b) =   \frac{\uppi(a,b)\uppi_Z(f(a))}{\uppi_X(a)\uppi_Y(b)}
    \,
\]
as follows.
\begin{center}
\begin{tabular}{c|ccccc}
$(a,b)$ & $(0,0')$ & $(1,1')$ & $(1,2')$ & $(2,1')$ &$(2,2')$\\
\hline
$\varphi(a,b)$ & $1$ & $2$ & $1/2$ & 0 & $3/2$
\end{tabular}
\end{center}
Note that in this case, $\Delta'\neq\Delta$.

Using $\varphi$ we may compute the transition matrix $P$ described immediately after Theorem~\ref{thm:DTHWMC}, obtaining the WMC $\MC(\uppi,P)$ illustrated in Figure~\ref{fig:coupled323}. The rows and columns of $P$ are indexed by $(0,0'), (1,1'), (1,2'), (2,2')$ in order. 
\begin{figure}
\centering
\begin{tikzpicture}[scale=0.7]

    \begin{scope}[every node/.style={ellipse,thick,draw,minimum size=0.7cm,inner sep=0.06cm}]
    
        \node (AA)  at (0,0)  {$(0,0')$};
        \node (BB)  at (4,4)  {$(1,1')$};
        \node (BC)  at (0,4)  {$(1,2')$};
        \node (CC)  at (4,0)  {$(2,2')$};
        
    \end{scope}
    
    \begin{scope}[>={Stealth}, every edge/.style={draw=black,very thick}]
    
        \path [->] (AA) edge node[left=2mm] {$1/3$} (BB);
        \path [->] (AA) edge[bend left=15] node[left] {$1/6$} (BC);
        \path [->] (AA) edge[bend left=15] node[above] {$1/2$} (CC);
        \path [->] (BB) edge[bend left=15] node[below] {$1/4$} (BC);
        \path [->] (BB) edge[bend left=15] node[left] {$3/4$} (CC);
        \path [->] (BC) edge[bend left=15] node[below] {$1$} (BB);
        \path [->] (CC) edge[bend left=15] node[above] {$1$} (AA);
        
    \end{scope}

    \node at (2,-1)
    {$W$};
    
    \node at (9,2) 
    {$\displaystyle 
    P=
    \begin{pmatrix}
        0 & 1/3 & 1/6 & 1/2 \\
        0 & 0 & 1/4 & 3/4 \\
        0 & 1 & 0 & 0 \\
        1 & 0 & 0 & 0
    \end{pmatrix}
    $}; 

\end{tikzpicture}
\caption{WMC of $X$ and $Y$ in Example~\ref{ex: 3 states each affirmative}.}
\label{fig:coupled323}
\end{figure}
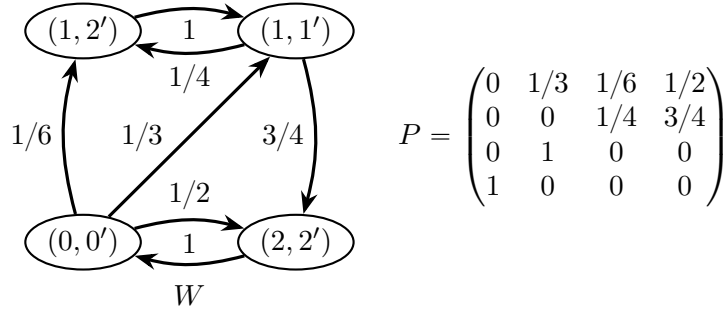

The coupling is an EMC since $\proj^1$ and $\proj^2$ are exact lumpings with respect to the following families of probability measures on $\Delta'$,
\[
    \nu^1_0 = (1,0,0,0), \quad \nu^1_1 = (0,2/3,1/3,0), \quad \nu^1_2 = (0,0,0,1),\]
\[
    \nu^2_{0'}= (1,0,0,0), \quad \nu^2_{1'} = (0,1,0,0), \quad \nu^2_{2'} = (0,0,1/4,3/4),
\]
and $\pi$ is a convex combination of each of these families.
\end{example}

\begin{example}[Coupling biased random walks on $\mathbb{Z}$]\label{ex:biasedRW}
    Let $1/2 < p < 1$, let $q = 1-p$, and let $(\Xoriginal_t)_{t \ge 0}$ be a biased simple random walk on $\mathbb{Z}$ starting at $0$, with transition probabilities
    \[
        P_X(x,x') = p\ind{x'=x+1} + q\ind{x'=x-1}.
    \]
    Perhaps surprisingly, $(|\Xoriginal_t|)_{t \ge 0}$ is also a homogeneous Markov chain. Its transition matrix is defined as follows for $z,z' \in \mathbb{N}$.
    \[
        P_Z(z,z') =
        \begin{cases}
            1, & z=0, z'=1, \\\frac{\displaystyle{p^{z+1}+q^{z+1}}}{\displaystyle{p^z+q^z}}, & z\ge1, z'=z+1, \smallskip\\
    \frac{\displaystyle{p^z q+q^z p}}{\displaystyle{p^z+q^z}}, & z\ge1, z'=z-1, \\
            0, & \text{otherwise.}
        \end{cases}
    \] 
    To verify this, for each $n\ge 0$, let $\nu_n \in \PM(\mathbb{Z})$ be given by
    \[
        \nu_n(k) = 
        \frac{p^n\ind{k=n}+q^n\ind{k=-n}}{p^n + q^n}.
    \]
    One can check that $\Xoriginal = \MC(\delta_0,P_X)$ lumps exactly to $\Zoriginal = \MC(\delta_0,P_Z)$  under the absolute value map $|\cdot|$, with respect to the family $(\nu_n)_{n\ge0}$.

    Now let $\Yoriginal$ be a copy of $\Xoriginal$. To fit the setting of Theorem~\ref{thm:DTHWMC}, take $f$ and $g$ to be the absolute value function. Clearly there is a coupling $(X,Y)_{t \ge 0}$ of $\Xoriginal$ and $\Yoriginal$ in which $X_t = Y_t$ for every $t$, and this satisfies all the conclusions of Theorem~\ref{thm:DTHWMC} apart from conditional independence of $X$ and $Y$ given $(f(X_t)_{t \ge 0})$.

    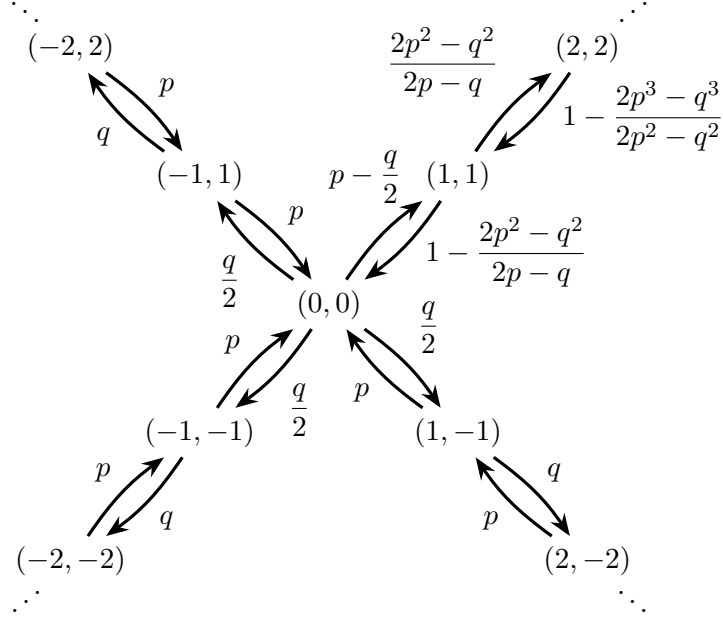
\begin{figure}
    \centering
    \begin{tikzpicture}[scale=0.85]
    
        \begin{scope}
    
            \node (A) at (0,0) {$(-2,-2)$} ;
            \node (B) at (2,2) {$(-1,-1)$} ;
            \node (C) at (4,4) {$(0,0)$} ;
            \node (D) at (6,6) {$(1,1)$} ;
            \node (E) at (8,8) {$(2,2)$} ;

            \node (F) at (0,8) {$(-2,2)$} ;
            \node (G) at (2,6) {$(-1,1)$} ;
            \node (H) at (6,2) {$(1,-1)$} ;
            \node (I) at (8,0) {$(2,-2)$} ;
            
        \end{scope}
        
        \begin{scope}[>={Stealth}, every edge/.style={draw=black,very thick}]
        
            \path [->] (A) edge[bend left=10] node[above left] {$p$}   (B);
            \path [->] (B) edge[bend left=10] node[below right] {$q$}   (A);
            \path [->] (B) edge[bend left=10] node[above left] {$p$}   (C);
            \path [->] (C) edge[bend left=10] node[below right] {$\displaystyle{\frac{q}{2}}$}   (B);
            \path [->] (C) edge[bend left=10] node[above=5mm,yshift=-2mm,xshift=-2mm] {$p-\displaystyle{\frac{q}{2}}$}   (D);
            \path [->] (D) edge[bend left=10] node[right=2mm,yshift=-1mm,xshift=-1mm] {$1-\displaystyle{\frac{2p^2-q^2}{2p-q}}$}   (C);
            \path [->] (D) edge[bend left=10] node[above left] {$\displaystyle{\frac{2p^2-q^2}{2p-q}}$}   (E);
            \path [->] (E) edge[bend left=10] node[right=2mm] {$1-\displaystyle{\frac{2p^3-q^3}{2p^2-q^2}}$}   (D);

            \path [->] (F) edge[bend left=10] node[above right] {$p$}   (G);
            \path [->] (G) edge[bend left=10] node[below left] {$q$}   (F);
            \path [->] (G) edge[bend left=10] node[above right]  {$p$} (C);
            \path [->] (C) edge[bend left=10] node[below left] {$\displaystyle{\frac{q}{2}}$}   (G);
            \path [->] (C) edge[bend left=10] node[above right] {$\displaystyle{\frac{q}{2}}$}   (H);
            \path [->] (H) edge[bend left=10] node[below left] {$p$}   (C);
            \path [->] (H) edge[bend left=10] node[above right] {$q$}   (I);
            \path [->] (I) edge[bend left=10] node[below left] {$p$}   (H);
            
        \end{scope}

        \begin{scope}
    
            \node (J) at (-0.7,8.7) {$\ddots$} ;
            \node (K) at (-0.7,-0.5) {$\iddots$} ;
            \node (L) at (8.7,-0.5) {$\ddots$} ;
            \node (M) at (8.7,8.7) {$\iddots$} ;
        \end{scope}
    
    \end{tikzpicture}
    \caption{Exact Markovian coupling of two biased simple  random walks in which the absolute values coincide.}
    \label{fig: coupled SBRWs}
    \end{figure}

    We will now exhibit the WMC
    \[
        W  = \MC(\delta_{(0,0)},P) = (X_t, Y_t)_{t \ge 0}
    \] 
    with $Z_t = |X_t| = |Y_t|$ for all $t \ge 0$, where $X$ and $Y$ are conditionally independent given $Z$.   Clearly, the initial distribution of $W$ is concentrated at $(0,0)$. To compute the transition matrix $P$, it suffices to compute $\varphi$. This may be done by computing $\mathbb{P}(W_n = (\pm n,\pm n))$ in two different ways, firstly using the explicit form of $\upalpha$ and $P$ (equations~\eqref{eq: coupled pi def} and~\eqref{eq: coupled P def}), and secondly using the decomposition of $Z$ into excursions from $0$ along with the conditional independence of $X$ and $Y$ given $Z$. We leave it as an exercise for the interested reader to verify that $\varphi(0,0) = 1$ and, for each $n \ge 1$,

         \[
             \varphi(n,n)
             =
             \frac{(p^n+q^n)(2p^n-q^n)}{2p^{2n}} 
             \,,
         \]
         \[
             \varphi(-n,n)=
             \varphi(n,-n)=
             \frac{p^n+q^n}{2p^n},
         \quad
             \varphi(-n,-n)
             =
             \frac{p^n+q^n}{2q^n}.
         \]
    This yields
    \[
        P((0,0),(1,1))=
        p-\frac{q}{2}\,,
    \]
    \[
        P((0,0),(-1,1))=
        P((0,0),(1,-1))=
        P((0,0),(-1,-1))=
        \frac{q}{2}\,,
    \]
    and for $n\ge1$, 
    \[  P((n,n),(n+1,n+1)) = \frac{2p^{n+1}-q^{n+1}}{2p^n-q^n} = 1 - P((n,n),(n-1,n-1))\,,  \]
    \begin{align*}
        P((-n,n),(-n-1,n+1)) &= q, \quad &P((-n,n),(1-n,n-1)) &=p\,,\\
        P((n,-n),(n-1,1-n)) &= q, \quad   &P((n,-n),(n-1,1-n)) &= p\,,  \\
        P((-n,-n),(-n-1,-n-1)) &= q, \quad  &P((-n,-n),(1-n,1-n) &= p\,.
    \end{align*}    
    All other entries of $P$ are $0$. The transition probabilities are illustrated in Figure~\ref{fig: coupled SBRWs}.  
    
    Proposition~\ref{cor: g exact proj1 exact} tells us that this coupling is an EMC, and the reader may enjoy verifying this directly. To check that $\proj^2$ is an exact lumping, consider the family of probability measures $(\mu_y)_{y \in \mathbb{Z}}$ on $\Delta$ given by
    \[ \mu_y = 
    \begin{cases} \frac{1}{2}\delta_{(y,y)} + \frac{1}{2}\delta_{(-y,y)} & \text{if $y < 0$,}\\
    \delta_{(0,0)} & \text{if $y = 0$,}\\[3pt]
    \displaystyle\frac{q^y}{2p^y}\,\delta_{(-y,y)} + \Bigl(1 - \frac{q^y}{2p^y}\Bigr)\delta_{(y,y)}& \text{if $y > 0$.}
    \end{cases}
    \]
    We remark that the behaviour of $W$ given $Z$ is easy to describe: for each finite excursion of $Z$ from $0$, the corresponding excursion of $W$ from $(0,0)$ picks one of the four arms of $\Delta$ uniformly, independently over excursions. The final excursion of $Z$ from $0$ that never returns to $0$ corresponds to an excursion of $W$ from $(0,0)$ up the arm where both coordinates are positive. What is not so obvious, but is shown by Theorem~\ref{thm:DTHWMC}, is that this construction makes $W$ a homogeneous Markov chain.

    Finally, we justify the assertion made in Remark~\ref{not-just-stepwise} that conditioning on all of $Z$ is in general not the same as conditioning just one step ahead. In the present example, if we tried instead to couple $\Xoriginal$ and $\Yoriginal$ by sampling $Z$ and then for each $t \ge 0$ letting $X_{t+1}$ and $Y_{t+1}$ be conditionally independent given $X_t, Y_t$, and $Z_{t+1}$, we would obtain a Markov process $(X_t,Y_t)$ whose marginals do not have the same laws as $\Xoriginal$ and $\Yoriginal$. Indeed, from $(0,0)$ the coupled chain would step to $(-1,-1)$ with probability $q^2$, and then with positive probability $Z$ would never return to $0$ and hence we would have $X_t \to -\infty$ as  $t \to \infty$, but $\Xoriginal_t \to +\infty$ a.s., so $X$ and $\Xoriginal$ cannot have the same law. 
    \enlargethispage*{1cm}
\end{example}

\section*{Acknowledgements}
The research of Edward Crane was supported by the Heilbronn Institute for Mathematical Research. Alexander E.~Holroyd was supported in part by a Royal Society Wolfson Fellowship. Erin Russell was supported by the UK Engineering and Physical Sciences Research Council, grant number EP/W52413X/1.
%
\bibliographystyle{plain}
\bibliography{bibliography}
%
\end{document}